\documentclass{article}

\usepackage[utf8x]{inputenc}
\usepackage[english]{babel}
\usepackage{graphicx}
\usepackage{amscd}
\usepackage{amsfonts}
\usepackage{latexsym}
\usepackage{amssymb,amsmath}
\usepackage[usenames]{color}

\usepackage{amscd}
\usepackage{psfrag}

\usepackage{amsthm}

\usepackage{tikz-cd}
\usepackage{faktor}

\theoremstyle{plain}
\newtheorem{teor}{Theorem}[section]

\newtheorem{conj}{Conjecture}[section]

\newtheorem{cor}{Corollary}[section]
\newtheorem{prop}[teor]{Proposition}
\newtheorem{lemma}{Lemma}[section]

\newtheorem{remark}{Remark}[section]

\theoremstyle{definition}
\newtheorem{defi}[teor]{Definition}

\def\C{\mathbb{C}}
\def\R{\mathbb{R}}

\def\Z{\mathbb{Z}}

\def\D{\mathbb{D}}

\def\m{M_\Gamma}
\def\F{\mathcal F}
\def\K{\mathcal K}
\def\S{\mathcal S}
\newcommand{\HH}{\mathbb{H}}

\usepackage{hyperref}
\hypersetup{
    colorlinks=true,
    linkcolor=blue,
    citecolor=blue,
    urlcolor=cyan,
    pdfpagemode=UseOutlines,
    bookmarksopen=true
}

\begin{document}

\title{Tessellating the discreteness locus for the modular mating family of correspondences}

\author{Shaun Bullett, Luna Lomonaco, Arcelino Lobato do Nascimento,\\
Pedro Ivan Suarez Navarro, Miguel Ratis Laude}

\date{August, 2026}

\maketitle
\begin{abstract}

The modular Mandelbrot set $\m$, the connectedness locus of the modular mating family of $2:2$ holomorphic correspondences $\F_a$ on the Riemann sphere, is homeomorphic to the classical Mandelbrot set $M$. The Klein combination locus $\K$ (the `discreteness locus' of the family $\F_a$) is a pinched neighborhood of $\m$ in the $a$-plane, pinched at the root point. We construct a canonical map $\Psi$ from $\K\setminus\m$ into the hyperbolic plane $\HH$, inspired by the construction of Douady and Hubbard for their celebrated conformal bijection $\Phi:\widehat \C\setminus M\to \widehat\C\setminus\overline{\D}$, and we prove that $\Psi$ is analytic. This map $\Psi$ induces a tessellation of $\K\setminus\m$ by pulling back a tessellation of $\HH$ invariant under the modular group. We develop a series of conjectures concerning the structure of $\K$, its boundary, and $\Psi(\K)\subset \HH$.

\end{abstract}

\section{Introduction}\label{intro}

The family $\F_a: z \to w$ of $(2:2)$ holomorphic correspondences on the Riemann sphere $\widehat\C$, defined by the relation $$\left(\frac{az+1}{z+1}\right)^2+\left(\frac{az+1}{z+1}\right)\left(\frac{aw-1}{w-1}\right)+\left(\frac{aw-1}{w-1}\right)^2=3,$$ was introduced by the first author with Christopher Penrose in \cite{BP1}, where it was shown to contain examples of {\it matings} between quadratic maps and the modular group $PSL(2,\Z)$. The {\it modular Mandelbrot set} $\m$ is the connectedness locus of this family in the plane of the parameter $a$. In \cite{BL3} the first two authors of the current article proved the conjecture in \cite{BP1} that $\m$ is homeomorphic to the classical Mandelbrot set, $M$. 

Below, we show that there exists a (pinched) neighbourhood $\mathcal D$ of $\m$ in parameter space (the $a$-plane), and a canonical dynamically defined analytic map $\Psi$ from $\mathcal D\setminus\m$ into the hyperbolic plane $\HH$. Any tessellation of $\HH$ which is invariant under the standard action of $PSL(2,\Z)$ pulls back via $\Psi$ to a tessellation of $\mathcal D\setminus\m$ which one can think of as a framework encoding the combinatorial structure of $\m$, analogously to the way that Douady and Hubbard \cite{DH1,DH2} encoded the combinatorial structure of the classical Mandelbrot set $M$ via the grid of {\it external rays} and {\it equipotentials} defined by their conformal bijection $\Phi: \widehat\C \setminus M \to \widehat\C \setminus \overline \D$.

Initially, in Section \ref{Psi_def}, we define $\Psi$ as a continuous map from $\mathcal D\setminus\m$ into $\HH$, where $\mathcal D:=\{a:|a-4|<3\}$. For $a\in \mathcal{D}\setminus\m$ the correspondence $\F_a$ has a {\it standard} fundamental domain $\Delta_a^{st}$ for its action on the regular set $\Omega(\F_a)$ (see Section \ref{prelim}). There is a conformal bijection $\varphi_a$ from $\Delta_a^{st}$ to a fundamental domain (with specified vertices) for the action of $PSL(2,\Z)$ on $\HH$. We show that when $a\in \mathcal D\setminus \m$, the critical value $v_a$ of $\F_a$ lies in a copy $\F_a^{-n}(\Delta_a^{st})$ of $\Delta_a^{st}$, and that $\varphi_a$ extends equivariantly to this copy. We define $\Psi(a)$ to be $\varphi_a(v_a)$.

There is an obvious parallel with Douady and Hubbard's construction \cite{DH1} of a canonical conformal bijection $\Phi:\widehat\C\setminus M \to \widehat\C\setminus \D: c \to \varphi_c(c)$, which was the key step in their proof that the classical Mandelbrot set $M$ is connected. For any quadratic polynomial $q_c:z \to z^2+c$, the point at infinity is a super-attracting fixed point, and as a consequence there is a neighborhood of $\infty$ on which $q_c$ is conjugate to $z\to z^2$ via the B\"ottcher conjugacy $\varphi_c$ (one defines $\varphi_c(z)$ to be $\lim_{n\to \infty} (q_c^{\circ n}(z))^{1/2^n}$ for a suitable branch of the $2^n$th root function, which is easy to define in practice: see \cite{DH1,DH2} for details). The B\"ottcher conjugacy is defined on a large neighborhood of $\infty$, indeed on the whole of the complement $\widehat{\C}\setminus K(q_c)$ of the filled Julia set $K(q_c)$ if the latter is connected. The stroke of genius of Douady and Hubbard was to observe that when $K(q_c)$ is not connected, the domain of definition of the B\"ottcher conjugacy $\varphi_c$ contains the critical value $c$ of $q_c$ and that the assignment $\Phi:c \to \varphi_c(c)$ defines a conformal bijection between the complement $\C\setminus M$ of the Mandelbrot set and the complement $\C\setminus \overline{\D}$ of the closed unit disc.

Just as in Douady and Hubbard's construction, our `B\"ottcher conjugacy' $\varphi_a$ (between the correspondence $\F_a$ on its regular set $\Omega(\F_a)$, and  the group $PSL(2,\Z)$ on $\HH$) is defined on the whole of $\widehat{\C}\setminus \Lambda(\F_a)$ when the limit set $\Lambda(\F_a)$ of the correspondence $\F_a$ is connected, in other words when $a \in \m$, but for $a\notin \m$ there is a significant difference between their situation and ours: we can only define $\varphi_a$ for values of $a$ such that $\F_a$ is `discrete' in an appropriate sense, and has a `fundamental domain' $\Delta_a$. This restricts us to a particular pinched neighborhood $\K$ of $\m$, known as the {\it Klein combination locus} (see Section \ref{prelim} below for a definition). For most values of $a$ outside $\K$, the dynamical behaviour of the correspondence $\F_a$ appears to be `chaotic'; there does not exist a partition into a regular set $\Omega(\F_a)$ and limit set $\Lambda(\F_a)$,
so there is no `fundamental domain' $\Delta_a$ and no `B\"ottcher map' $\varphi_a$. The Klein combination locus $\K$ has properties akin to those of the {\it discreteness locus} in the moduli space of representations in $PSL(2,\C)$ of the free product of two finite cyclic groups. 

It is straightforward to extend the domain of definition of $\Psi$ from $\mathcal D\setminus\m$ to the whole of $\K\setminus\m$, but $\K$ is not simply-connected (it has a puncture point at $a=1$), with the consequence that the extension of $\Psi$ may be multivalued. Indeed in a detailed study in Section \ref{extension} we prove that extending $\Psi$ along paths $\ell^\mathcal{U}$ and $\ell^\mathcal{L}$ from $a=1+\delta$ to $a=-1$, above and below the puncture point $a=1$, leads to different answers for $\Psi(-1)$ (Proposition \ref{Psi-1}). We conclude Section \ref{extension} with a discussion of the set of all fundamental domains for the action of the modular group $PSL(2,\Z)$ on $\HH$ which have boundary a Jordan curve through a standard set of vertices: these are the candidates for the image $\varphi_a(\Delta_a)$ of a fundamental domain $\Delta_a$ for $\F_a$.

In Section \ref{anal} we prove that (every branch of) $\Psi$ is (locally) analytic on $\K$ (Theorem \ref{psi_analytic}). While this statement is analogous to Douady and Hubbard's, the proof is more difficult in our case as we do not have a single point to take the role of `organizing center' played by infinity for their map $\Phi$. What we have instead is a conformal isomorphism $\varphi_a$ between the orbit space $\Omega(\F_a)/\langle \F_a \rangle$ and the modular surface $\HH/PSL(Z)$ for each $a\in \K$, but it turns out that conformal rigidity of the modular surface is just sufficient to deliver the analyticity of $\Psi$.

In the alternative coordinate $Z=(az+1)/(z+1)$ the correspondence $\F_a$ can be expressed as $J_a\circ Cov_0^Q$ where $J_a$ is the conformal involution of the sphere which has fixed points $Z=1$ and $Z=a$, and $Cov_0^Q$ is the (deleted) covering correspondence of the polynomial $Q(Z)=Z^3-3Z$ (see Section \ref{prelim}). The vertices of the tessellation of $\K\setminus\m$ correspond to values of $a\in \K$ where $\F_a$ satisfies a {\it critical relation}: the grand orbit of the critical point of $\F_a$ contains either the fixed point $Z=a$ of the involution $J_a$ or the fixed point $Z=\infty$ of the covering correspondence $Cov_0^Q$. In Section \ref{crit-rel} we examine the role of these critical relations both inside and outside $\K$. In particular we exhibit a computer plot, Figure \ref{kleinC}, of critical relation parameters outside $\K$: this suggests that such parameters may be dense outside $\K$, and also provides the first indication of the shape of $\K$, and of the possible critical relation points on its boundary $\partial\K$.

In Section \ref{structure_K} we propose a series of conjectures concerning the structures of $\K$ and $\partial\K$. The Klein combination locus $\K$ is an analogue of the (better understood) `discreteness locus' in the moduli space of representations in $PSL(2,\C)$ of the free product of a cyclic group of order $2$ and a cyclic group of order $3$, but $\K$ exhibits intriguing differences from the latter: see Section \ref{global}. In Section \ref{scenario}, we outline a possible scenario for $\Psi(\K)\subset \HH$, compatible with what we know, and what we conjecture, about $\K$ and $\partial\K$. 

\begin{figure}
 \begin{center}
 \includegraphics[width=3.8cm]{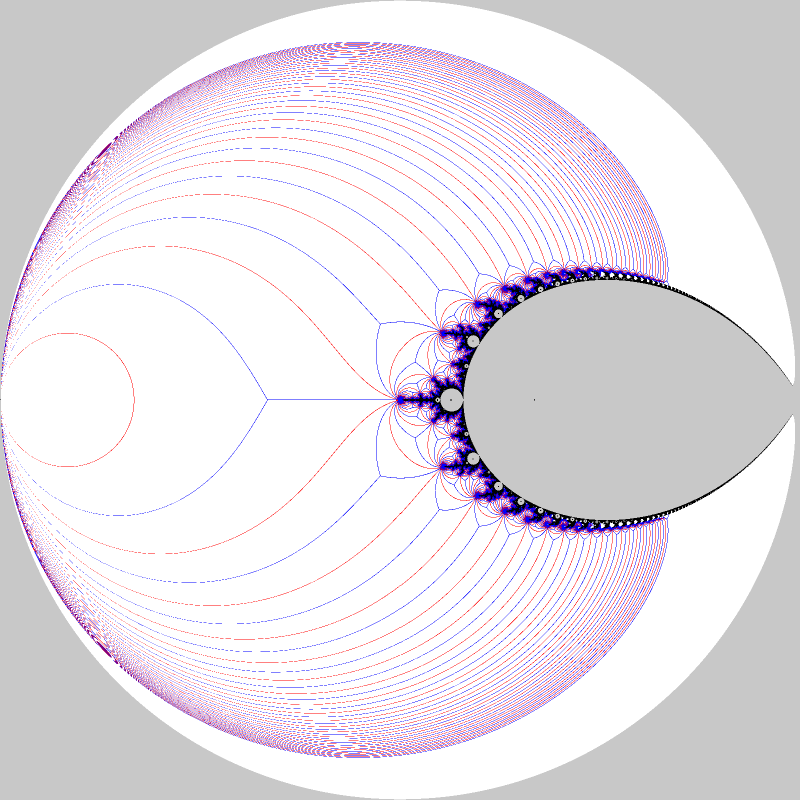}\hspace{0.1cm}
 \includegraphics[width=3.8cm]{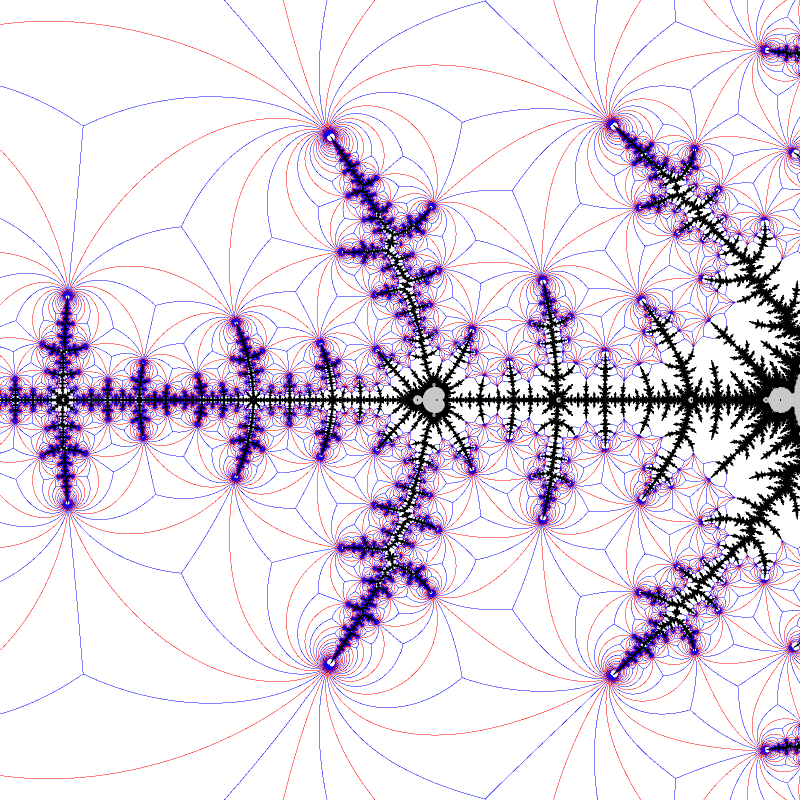} 
 \includegraphics[width=3.8cm]{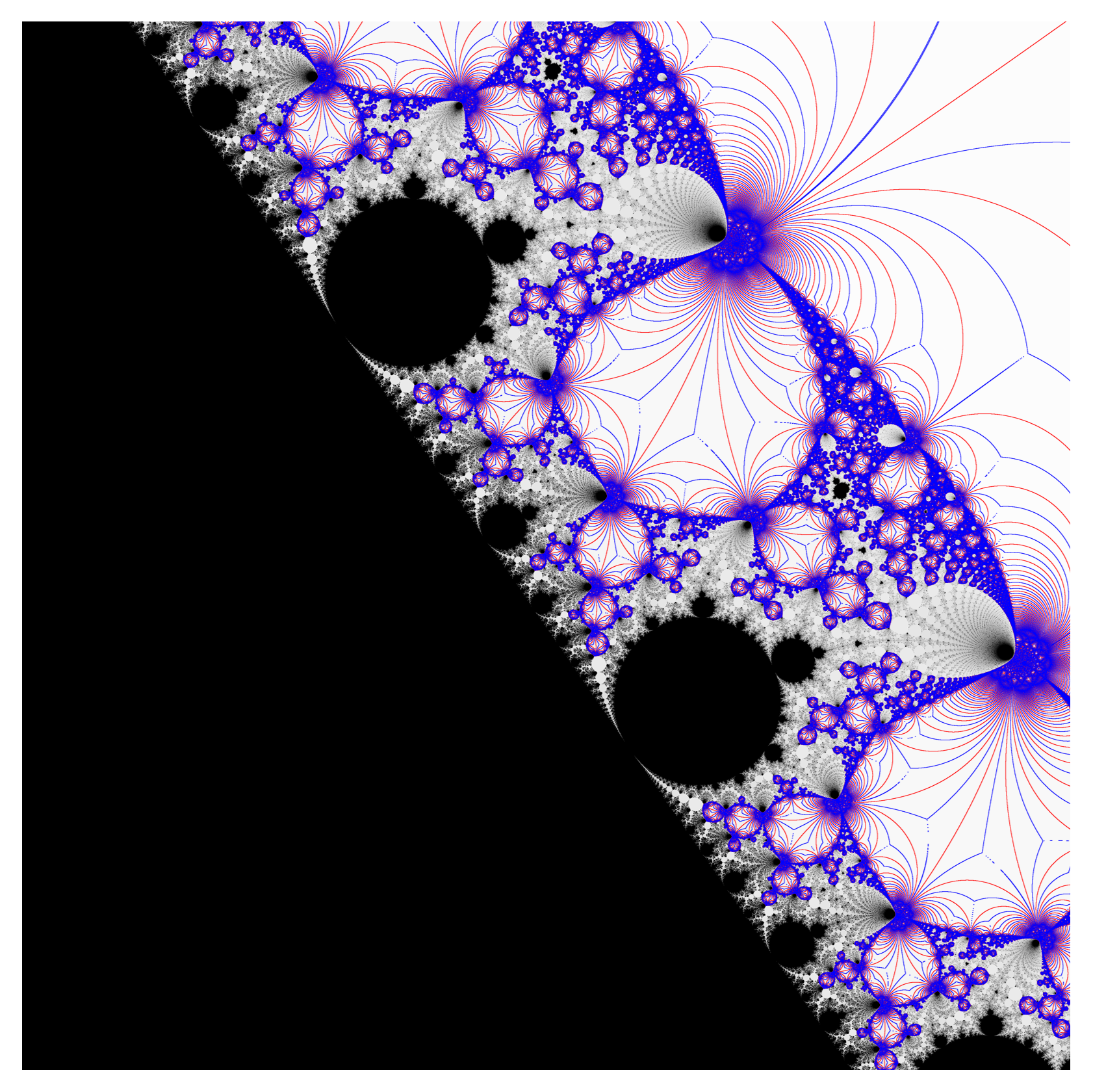} \hspace{0.1cm}
 
 \includegraphics[width=3.8cm]{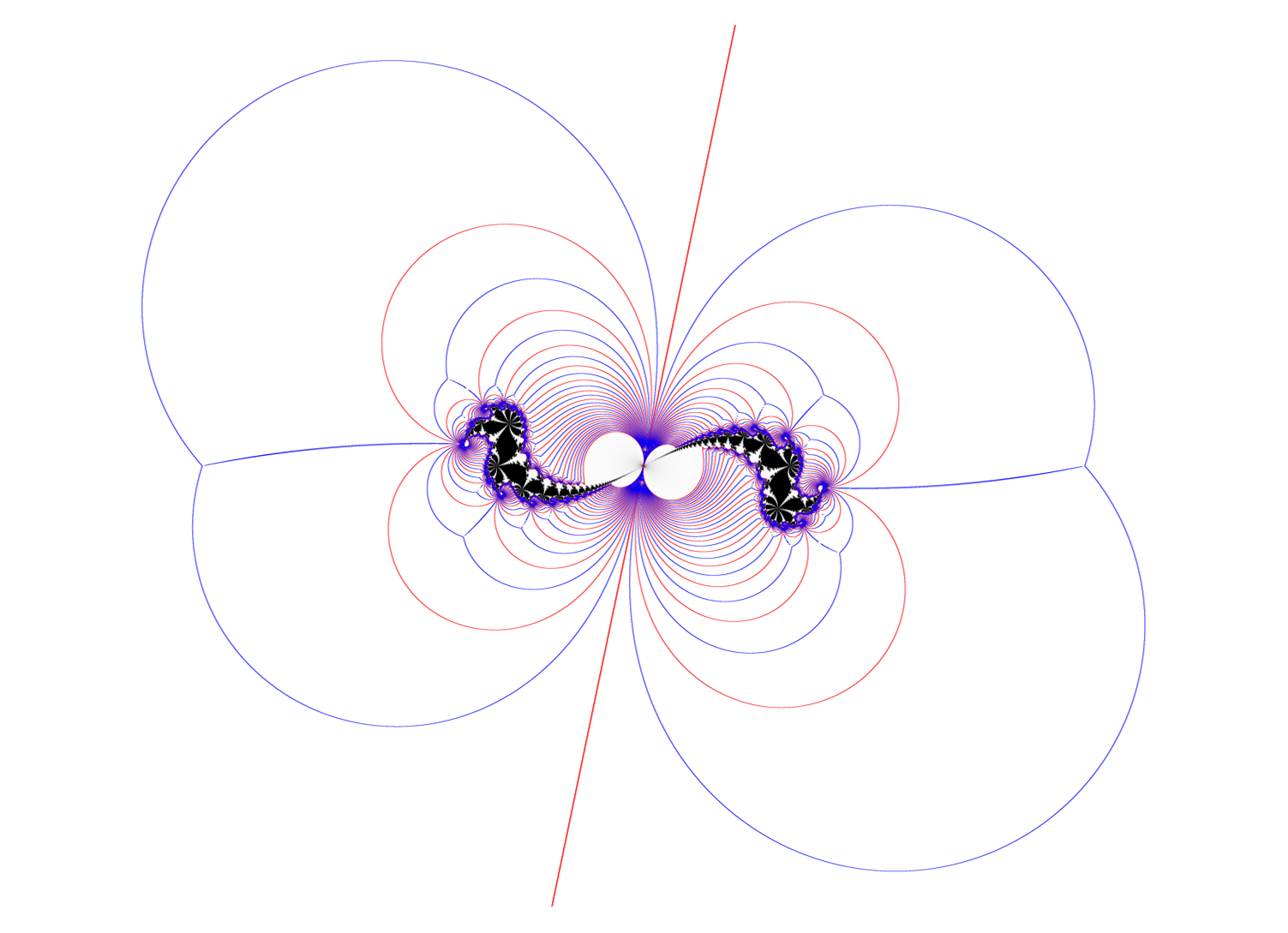}
 \includegraphics[width=3.8cm]{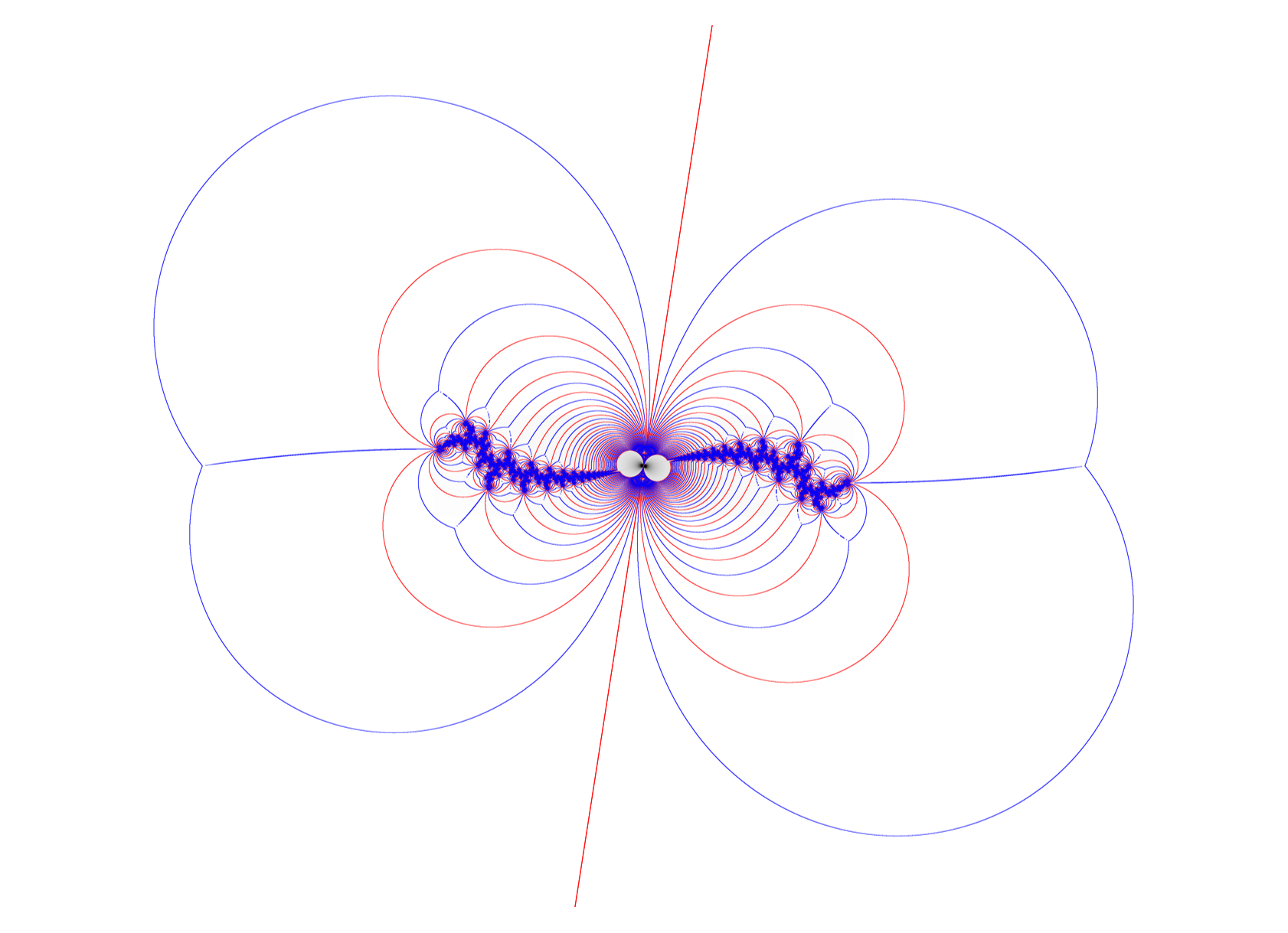} \hspace{0.1cm}
 \includegraphics[width=3.8cm]{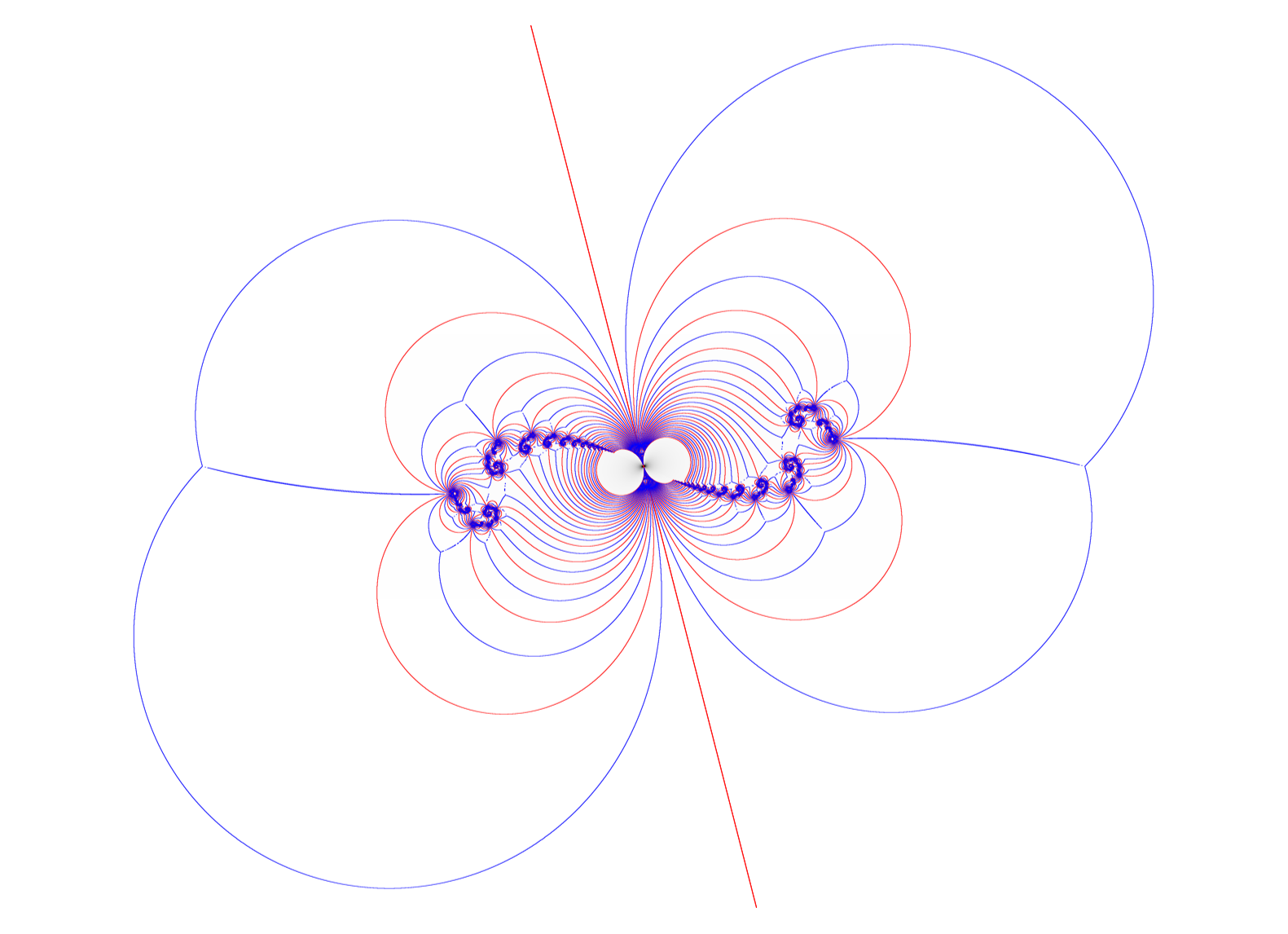}
 \end{center} 
\caption{Top line: Plots of the modular Mandelbrot set $M_\Gamma$ and the surrounding modular tiling pulled back from $\HH$ by $\Psi$. In the left-hand plot the round white disc has centre $a=4$ and radius $3$; the zoom in the middle has at its centre the small copy of $M_\Gamma$ on the real axis which corresponds to period $3$; the right-hand plot is a zoom in `elephant valley' near the root $a=7$ of $M_\Gamma$ (despite appearances there are narrow channels connecting the `sea' to each pair of circular `lakes'). Bottom line: three examples of plots of the dynamical plane of correspondences $\F_a$. In the first two the `B\"ottcher map' $\varphi_a$ conjugates the action of $\F_a$ on the complement of the limit set $\Lambda(\F_a)$ to the action of the modular group $PSL(2,\Z)$ on the upper half plane $\HH$. In the third example there is only a partial `B\"ottcher map' as $\Lambda(\F_a)$ is disconnected: at critical points (and pre-images) there are tiles which are ramified double covers of tiles in the modular tessellation of $\HH$. }\label{mandelcorr}

\end{figure}

Figure \ref{mandelcorr} contains a plot of an approximation to the pull-back to $ \mathcal D \setminus \m$ of a tessellation of $\HH$ invariant under $\Gamma=PSL(2,\Z)$. While the positions of the vertices of tiles are accurately plotted by the algorithm, this is not the case for the edges between them. See Appendix I (Section \ref{plots}) for a discussion of the algorithms used in plots, and the difficulty in plotting the `true' tessellation.

In Appendix II (Section \ref{triangles}) we exhibit a sequence of isolated discrete members of the family $\F_a$, which lie outside $\K$ on the real axis in the $a$-plane between $-\infty$ and $-1$, accumulating at $-1$ (a `parabolic' boundary point of $\K$). These echo a similar phenomenon in the moduli space of representations of $C_2*C_3$ in $PSL(2,\C)$, where each `parabolic' point on the boundary of the discreteness locus is the accumulation point of a `harmonic' sequence of isolated non-faithful discrete representations outside the locus. 


\section{Preliminaries and notation}\label{prelim}

Further details of the material in this section can be found in \cite{BL1}.

\subsection{Covering correspondences for rational maps}
For a degree $d$ rational map $Q:\widehat\C \to \widehat\C$ we define the {\it covering correspondence} of $Q$ to be the $(d:d)$ holomorphic correspondence
$$Cov^Q:\ z \to w \ \Leftrightarrow \ Q(w)=Q(z),$$
and the {\it deleted covering correspondence} to be the $(d-1:d-1)$ holomorphic correspondence
$$Cov_0^Q:\ z \to w \ \Leftrightarrow \ \frac{Q(w)-Q(z)}{w-z}=0.$$

\subsection{Fundamental domains for covering correspondences}
A {\it fundamental domain} for $Cov^Q$ is a maximal open set $\Delta_Q\subset \C$ such that $Q$ is injective on $\Delta_Q$. Thus the images of $\Delta_Q$ under
$Cov^Q$ are disjoint, and when their boundaries are also included these images cover $\widehat \C$.  
We shall require all our fundamental domains to be topological discs, bounded by piecewise-smooth Jordan curves which are smooth where they pass through simple critical points, and have 
tangents meeting at angles $2\pi/n$ at critical points of higher order.

\subsection{The family $\F_a$ and the Klein combination condition}\label{Klein_condition}

Via the change of coordinates 
$$Z=\frac{az+1}{z+1}, \ W=\frac{aw-1}{w-1}$$ 
the family of correspondences $\F_a$ is M\"obius conjugate to the family:
$$Z \to W\  \Leftrightarrow\ J{_a}(W) \in Cov_0^Q(Z)$$
where $J_a$ is the (M\"obius) involution which has fixed points $Z=a$ and $Z=1$, that is to say 
$$J_a(Z)=\frac{(a+1)Z-2a}{2Z-(a+1)},$$
and $Q$ is the cubic polynomial $Q(Z)=Z^3-3Z$. Thus in the coordinate $Z$ we can write $\F_a$ as the composition 
$$\F_a=J_a\circ Cov_0^Q.$$
In this article we will mostly use the coordinate $Z$, though many of our computer plots will be displayed in the $z$-plane, where the symmetry $J_a$ between forward and backward iteration of $\F_a$ is particularly apparent since $J_a(z)=-z$.

The critical points of $Q$ are $Z=1,-1$ and $\infty$. The correspondence $Cov_0^Q$ maps $Z=1$ to $\{1,-2\}$, it maps $-1$ to $\{-1,2\}$ and it maps $\infty$ to $\infty$. So the point $Z=1$ is a fixed point of both $J_a$ and $Cov_0^Q$, hence of the branch of $\F_a$ which fixes it. This fixed point is parabolic, since the branch of $\F_a$ which fixes it has derivative $1$ there. The {\it Klein combination condition} for a correspondence in the family $\F_a$ is that there exist a pair of fundamental domains, $\Delta_{J_a}$ for $J_a$ and $\Delta_Q$ for $Cov_Q$, such that $\Delta_{J_a}\cup \Delta_Q=\widehat\C \setminus\{1\}$ (equivalently $(\widehat\C\setminus\Delta_{J_a})\cap(\widehat\C\setminus\Delta_Q)=\{1\}$).

\subsection{The standard Klein combination pair for $a \in \D(4,3)$}\label{standard_domains}

The inverse image $Q^{-1}(L)$ of the real line segment $L=(-\infty,-2]\subset \C$ consists of $L$ together with a curve $C'$ which crosses the real axis at $Z=+1$ and runs off towards infinity asymptotically to the directions of argument $\pm \pi/3$. The component of $\widehat \C \setminus C'$ to the right of $C'$ is the {\it standard fundamental domain} $\Delta_Q^{st}$ for $Cov^Q$. When $a\in \mathcal D:= \D(4,3)$, the round circle through $Z=1$ and $Z=a$, centred on the real axis, meets $C'$ only at $Z=1$ (see \cite{BL1} Proposition 3.4). For such a value of $a$ we define the set of all points of $\widehat{\C}$ outside this round circle to constitute the {\it standard fundamental domain} $\Delta^{st}_{J_a}$ for $J_a$, and we call $(\Delta^{st}_{J_a},\Delta^{st}_Q)$ the {\it standard} Klein combination pair.

\subsection{Transversality at the parabolic fixed point} 

The boundaries of a Klein combination pair $(\Delta_{J_a},\Delta_Q)$ will necessarily be tangent at the parabolic fixed point $Z=1$. We shall require them to satisfy the additional condition that the direction of the common tangent be transverse to the attracting-repelling axis of $\F_a$ at $Z=1$. The {\it standard} Klein combination pair for $\F_a$ satisfy this condition when $|a-4|<3$, but not for values of $a$ such that $|a-4|=3$ (see \cite{BL1}, Proposition 3.9).

Let $\Delta_a$ denote the intersection of a Klein combination pair $(\Delta_{J_a},\Delta_Q)$ for $\F_a$. Whether or not we have the transversality condition, the Riemann sphere is partitioned dynamically into two completely invariant subsets, the {\it regular set} (which is open),
$$\Omega(\F_a,\Delta_a):=\bigcup_{n=-\infty}^{+\infty} \F_a^n(\overline \Delta_a\setminus\{1\}),$$
and the {\it limit set} (which is closed) $\Lambda(\F_a,\Delta_a)=\Lambda_+(\F_a,\Delta_a)\cup \Lambda_-(\F_a,\Delta_a)$ where
$$ \Lambda_+(\F_a,\Delta_a)=\bigcap_{n=0}^\infty \F_a^n(\overline\Delta_Q),
\ \mbox{\rm and }\ \Lambda_-(\F_a,\Delta_a)=J_a(\Lambda_+(\F_a,\Delta_a)).$$
Note that the Klein combination condition implies that
$$\Lambda_+(\F_a,\Delta_a)\cap\Lambda_-(\F_a,\Delta_a)=\{1\}.$$

The transversality condition on the pair $(\Delta_{J_a},\Delta_Q)$ ensures that the partition of $\widehat{\C}$ into $\Omega(\F_a)$ and $\Lambda(\F_a)$ is unique, and independent of the choice of such $(\Delta_{J_a},\Delta_Q)$ (see \cite{BL1}, Remark 3.1, for what can go wrong without this condition). It is proved in \cite{BL1}, Proposition 3.9, that given any Klein combination pair one can make an arbitrarily small perturbation of their boundaries, arbitrarily close to the parabolic fixed point, and ensure that the perturbed pair satisfy the condition.

\subsection{The Klein combination locus $\K$ and the modular Mandelbrot set $\m$}

The {\it Klein combination locus} $\K$ is defined to be the set of values of the parameter $a\in \C$ such that there exists a Klein combination pair $(\Delta_{J_a},\Delta_Q)$ for $\F_a$ satisfying the conditions we have required above, including the transversality condition. This locus has properties analogous to properties of the locus in the moduli space of representations in $PSL(2,\C)$ of a free product $C_p*C_q$ of finite cyclic groups for which the representation is both discrete and faithful: indeed it is this discreteness locus for the family of $(2:2)$ correspondences which have graphs the union of the graph of an element of $PSL(2,\C)$ of order $2$ with the graph of an element of order $3$. In particular (though we have not proved this)  it appears that for a dense set of values of the parameter $a$ in $\C\setminus \overline\K$ the space $\widehat\C/\langle\F_a\rangle$ of grand orbits does not have any Hausdorff component. We think of $\C\setminus \overline\K$ as the set of parameter values for which the dynamical behaviour of $\F_a$ is ``generically chaotic'', though there are isolated values of $a$ in this set for which $\widehat\C/\langle\F_a\rangle$ {\it does} have a Hausdorff component. These are analogous to the non-faithful discrete representations in the group case: see Appendix II (Section \ref{triangles}).

The {\it modular Mandelbrot set} $\m$ is the set of values of the parameter $a$ such that $\Lambda_-(\F_a)$ is connected (or equivalently $\Lambda_+(\F_a)$ is connected). When $a\in\m$ the correspondence $\F_a$ is a {\it mating} between a parabolic quadratic rational map and the modular group (\cite{BL1}). On a pinched neighbourhood of $\Lambda_-(\F_a)$ the restriction of $\F_a$ is a $2$-to-$1$ `pinched quadratic-like' map with critical point $Z=-1$ (denoted by $c_a$). 
The corresponding {\it critical value} is $v_a=J_a(2)$.

\subsection{The orbifold $\Omega(\F_a)/\langle \F_a \rangle$}

For every $a\in \K$, the grand orbit space $\Omega_a/\langle \F_a \rangle$ is naturally identified with the quotient space
$$\S:=(\overline\Delta_a\setminus\{P\})/{\rm (side-pairing)},$$ 
where $P$ is the parabolic fixed point ($Z=1$), and the side pairings are defined by $J$ and $Cov$ on the appropriate boundaries of $\Delta_a$. As an orbifold, $\S$ has the conformal structure of a  punctured sphere with two marked cone points $Q$ and $R$, of angles $\pi$ and $2\pi/3$ respectively: the puncture point corresponds to $P$, and the cone points to fixed points of $J$ and $Cov$ respectively. 

At the heart of our construction in this note is the observation that $\S$, being a sphere with these three marked points, is uniquely conformally isomorphic to the quotient of the upper half-plane $\HH$, by the modular group $\Gamma=PSL(2,\Z)$, generated by the pair of elliptic automorphisms of $\HH$:
$$\rho:z \to -1/(z+1)$$ 
of order $3$, which has fixed point $Q'=(-1+i\sqrt{3})/2$, and
$$\sigma: z \to -1/z$$ 
of order $2$, which has fixed point $R'=i$. Recall that as an abstract group, $PSL(2,\Z)$ is the free product of the cyclic groups generated by $\rho$ and $\sigma$. 

{\bf Note.}{\it In this article we adopt the convention of referring to points in the hyperbolic plane by their coordinates in the upper half-plane model $\HH$, and to orientation-preserving automorphisms of the hyperbolic plane by elements of the group $PSL(2,\R)$, but our illustrations will mostly be presented in the more convenient Poincar\'e disc model.}


\section{A canonical continuous map $\Psi:\mathcal D \setminus \m \to \HH$}\label{Psi_def}

When $a\in \m$ the correspondence $\F_a$ is a {\it mating} between a parabolic rational map $f$ and $\Gamma$ in the sense that the dynamical space $\widehat \C$ of $\F_a$ is partitioned (invariantly under $\F_a$) into a copy $\Omega(\F_a)$ of $\HH$ carrying the standard action of  $\Gamma$ via a map we call the {\it B\"ottcher map} $\varphi_a:\Omega(\F_a) \to \HH$, and a closed set $\Lambda(\F_a)$ consisting of the one-point union $\Lambda_+(\F_a)\cup \Lambda_-(\F_a)$ of two copies (forward and backward) of the connected filled Julia set of $f$. Although $\Lambda(\F_a)$ becomes disconnected when $a$ moves outside $\m$, we will show that for $a$ close enough to $\m$ we can still partially define a {\it B\"ottcher map} $\varphi_a: U \to \HH$ on an open subset $U$ of $\Omega(\F_a):=\widehat\C \setminus \Lambda(\F_a)$ containing a fundamental domain for $\F_a$. In particular, when $a\in \mathcal D\setminus \m$, the partial B\"ottcher map $\varphi_a$ is well-defined on the {\it critical value} $v_a$ of $\F_a$. In this section our main result will be that the assignment $a \to \varphi_a(v_a)$ defines a canonical continuous map from $\mathcal D \setminus \m$ into $\HH$ (Proposition \ref{DHmap}).
 
\begin{defi}\label{canonical}
Let $N$ be a (pinched) neighbourhood of $\m$ in $\K$. We shall say that a continuous map $\psi:N\setminus\m\to \HH$ is {\it canonical} if for each $a\in N\setminus\m$ the image $[v_a]$ of the critical value $v_a\in \Omega_a$ in the orbifold $\Omega_a/\langle \F_a\rangle$ is mapped to the image $[\psi(v_a)]$ of $\psi(v_a)$ in $\HH/PSL(2,\Z)$ by the (unique) conformal isomorphism $$\Theta_a: \Omega_a/\langle \F_a\rangle\to \HH/PSL(2,\Z)$$ which sends the marked points in $\Omega_a/\langle \F_a\rangle$ (the order $2$ and $3$ cone points, and the puncture point) to the corresponding marked points in $\HH/PSL(2,\Z)$.
\end{defi}

Thus if $\psi$ is {\it canonical}, then for any given $a\in N\setminus \m$ we only have a discrete set of possible values for $\psi(a)$, the points of the orbit of $PSL(2,\Z)$ which sits above $\Theta([v_a])\in \HH/PSL(2,\Z)$. The question of defining a canonical map $\Psi$ becomes the question of continuously choosing an inverse branch of the surjection $\HH \to \HH/PSL(2,\Z)$ at the point $\Theta([v_a])\in\HH/PSL(2,\Z)$) as $a$ varies: particular care is required at branch points, that is to say points of $\HH$ stabilised by non-trivial subgroups of $PSL(2,\Z)$.

\subsection{The position of the critical value $v_a$ of $\F_a$}

We start with a lemma which holds for {\it every} Klein combination pair $(\Delta_J,\Delta_Q)$, not just for the standard pair. \\

\noindent
{\bf Notation.} In the lemma below, and elsewhere, $\Delta_a$ will denote the intersection $\Delta_J\cap\Delta_Q$, and $\Delta'_a$ will denote $\overline \Delta_a\setminus \{P\}$
(where $P$ is the parabolic fixed point).\\

\begin{lemma}\label{fund_lemma}
For every $a\in \K\setminus\m$, and every $\Delta_a$ the intersection of a Klein combination pair $(\Delta_J,\Delta_Q)$ for $\F_a$, the forwards critical point $c_a$ of $\F_a$ lies in $\bigcup_{n\ge 1} \F_a^{-n}(\Delta'_a\cup J_a\Delta'_a)$, and the critical value $v_a\in \bigcup_{n\ge 0} \F_a^{-n}(\Delta'_a\cup J_a\Delta'_a)$.
\end{lemma}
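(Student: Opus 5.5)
The plan is to argue by contradiction, using the dichotomy between the regular set and the limit set. The sphere is partitioned into $\Omega(\F_a,\Delta_a)=\bigcup_{n\in\Z}\F_a^n(\Delta'_a)$ and $\Lambda=\Lambda_+\cup\Lambda_-$. So it suffices to show two things: that $c_a\notin\Lambda$, and that a point of $\Omega$ in the relevant region actually lies in a \emph{backward} image $\F_a^{-n}(\Delta'_a\cup J_a\Delta'_a)$ with $n\ge 1$.

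\textbf{Step 1: $c_a\notin\Lambda_-$.} Here $\Lambda_-=J_a\Lambda_+$ is the backward limit set, on a pinched neighbourhood of which $\F_a$ acts as a $2$-to-$1$ pinched quadratic-like map with critical point $c_a=-1$. If $c_a$ lay in $\Lambda_-$, then its whole forward orbit under this quadratic-like branch would stay in $\Lambda_-$. The standard Riemann--Hurwitz argument for (pinched) quadratic-like maps then shows that $\Lambda_-=\bigcap_n$ (nested preimages of a topological disc) is connected, since no preimage disc ever splits. This contradicts $a\notin\m$, because $\m$ is defined by connectedness of $\Lambda_-$.

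\textbf{Step 2: $c_a\notin\Lambda_+$.} Since $\Lambda_+\cap\Lambda_-=\{1\}$, and $c_a$ (which is $Z=-1$ in the $Z$-coordinate) is not the parabolic point $Z=1$, it remains to exclude $c_a\in\Lambda_+\setminus\{1\}$. I would do this by locating $c_a$ relative to the Klein combination pair: $c_a$ is a critical point of $Q$, so it lies on $\partial\Delta_Q$ (critical points of $Q$ lie on the boundary of every fundamental domain for $Cov^Q$). Hence $c_a\in\overline\Delta_Q$. Under the combination condition, $\Lambda_+$ is built from nested images of $\overline\Delta_Q$, while the side of the picture where the backward branch of $\F_a$ acts is $\widehat\C\setminus\Delta_{J_a}$, i.e.\ near $\Lambda_-$. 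I expect this bookkeeping — identifying which of $\Delta_J$, $\Delta_Q$ contains $c_a$ and that it lies in the $\Lambda_-$ "half" — to be the delicate point. I would handle it via the explicit description $\F_a=J_a\circ Cov_0^Q$ and the fact that the branch of $\F_a^{-1}$ on $\widehat\C\setminus\Delta_J$ is the one realizing the quadratic-like map of Step 1.

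\textbf{Step 3: the backward index.} Given $c_a\in\Omega$, it lies in some $\F_a^{m}(\Delta'_a)$. Every point of $\Omega$ in the region $\widehat\C\setminus\overline{\Delta_{J}}$ (the domain of the quadratic-like map, away from $\Lambda_-$) escapes under iteration of the quadratic-like branch. The first time it leaves, it lands in the closure of $\Delta_a\cup J_a\Delta_a$, minus $P$. The escape time $n$ is the number of forward iterates, so $c_a\in\F_a^{-n}(\Delta'_a\cup J_a\Delta'_a)$. Moreover $n\ge1$, since $c_a$ itself lies in the domain of the quadratic-like map rather than in $\Delta_a\cup J_a\Delta_a$; this is where Step 2's localisation is used.

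\textbf{Step 4: the critical value.} Applying one forward step gives $v_a=J_a(2)\in\F_a(c_a)$. The escape-time description shows that $v_a$ needs $n-1\ge0$ steps. This gives $v_a\in\bigcup_{n\ge0}\F_a^{-n}(\Delta'_a\cup J_a\Delta'_a)$.

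The main obstacle is Step 1, specifically making the pinched quadratic-like connectedness argument rigorous at the pinch point $P$. I would rely on the \cite{BL1} description of the pinched quadratic-like restriction rather than reprove it.
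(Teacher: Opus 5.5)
There is a genuine gap, and it sits exactly where you suspected: in the localisation of $c_a$. Step 1 is not the real difficulty; it is the standard fact that the paper also simply invokes.

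\textbf{What fails.} Your claim that a critical point of $Q$ lies on $\partial\Delta_Q$ for every fundamental domain is false. All one knows is $c_a\notin\Delta_Q$, because $Q$ is injective on the open set $\Delta_Q$.
\begin{itemize}
\item For the standard pair, $Q(-1)=2\notin(-\infty,-2]$, so $-1\notin C'$. Since $C'$ crosses the real axis only at $+1$, the point $-1$ lies to the left of $C'$, outside $\overline{\Delta_Q^{st}}$. It is $d_a=2$ that lies in $\Delta_Q^{st}$ (compare Remark~\ref{deform_rem}, where $Z=-1$ is deliberately moved out of $\overline{\Delta_Q}$).
\item Even if $c_a\in\overline{\Delta_Q}$ held, it could not exclude $\Lambda_+$, since $\Lambda_+=\bigcap_n\F_a^n(\overline\Delta_Q)\subset\overline\Delta_Q$.
\item You also have the two sides swapped. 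We have $\Lambda_+\subset\F_a(\overline\Delta_Q)\subset J_a(\widehat\C\setminus\Delta_Q)\subset\widehat\C\setminus\Delta_{J_a}$. So the region $\widehat\C\setminus\Delta_{J_a}$ that you attribute to $\Lambda_-$ is in fact the $\Lambda_+$ side. The quadratic-like map living there is the $J_a$-conjugate backward branch, whose critical point is $J_a(c_a)$.
\item Your Step 3 claim that $c_a$ lies in $\widehat\C\setminus\overline{\Delta_{J_a}}$ is false. From $c_a\notin\Delta_Q$, $c_a\neq P$ and $\Delta_Q\cup\Delta_{J_a}=\widehat\C\setminus\{P\}$, one gets $c_a\in\Delta_{J_a}$.
\end{itemize}
So Steps 2 and 3, including your justification of $n\ge 1$, do not go through as written.

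\textbf{How to repair it.} The fix is short and is what the paper does. Start only from $c_a\notin\Delta_Q$ and use the decomposition supplied by the Klein combination theorem,
\[
\widehat\C\setminus\Delta_Q=\Lambda_-\cup\bigcup_{n\ge1}\F_a^{-n}(\Delta'_a\cup J_a\Delta'_a).
\]
In your language, this says the following. The forward branch $z\mapsto J_a\bigl(Cov_0^Q(z)\cap\overline\Delta_Q\bigr)$ is a pinched quadratic-like map on $\widehat\C\setminus\Delta_Q$, not on $\widehat\C\setminus\overline{\Delta_{J_a}}$. It has critical point $c_a$ and filled Julia set $\Lambda_-$, and escaping orbits land in $\Delta'_a\cup J_a\Delta'_a$ after $n\ge1$ steps.

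Your Step 1 then finishes the claim for $c_a$. Step 2 becomes unnecessary, since $\Lambda_+$ meets $\widehat\C\setminus\Delta_Q$ only at $P\neq c_a$.

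For $v_a$, your route (taking $v_a$ as the image of $c_a$ under this branch, so it escapes one step sooner) works once the domain is corrected. The paper instead uses $\overline\Delta_Q=(\Delta'_a\cup J_a\Delta'_a)\cup J_a(\widehat\C\setminus\Delta_Q)$ to split into cases according to where $d_a=2$ lies, and then invokes $v_a\notin\Lambda_-$.
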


\begin{proof}
The Klein combination theorem tells us that, modulo the boundary $\partial\Delta_a$ of $\Delta_a$ and its forwards and backwards images, $\Omega_a$ is the union of the sets $W(\Delta_a)$ obtained when $W$ runs through all finite words in $Cov_0$ and $J_a$. To be precise:
$$\widehat{\C}\setminus \Lambda_a = \bigcup_{n\in \Z} \F_a^n(\Delta'_a\cup J_a\Delta'_a).$$
Note also that
$$\overline\Delta_Q=\Lambda_{+,a}\cup\bigcup_{n\ge 0} \F_a^n(\Delta'_a\cup J_a\Delta'_a)$$
and (though we do not need it in this proof)
$$\overline\Delta_J=\Lambda_{a,-}\cup\bigcup_{n\ge 0} \F_a^{-n}(\Delta'_a\cup Cov_0(\Delta'_a)).$$

Write $c_a$ for the forwards critical point of $\F_a$. The point $c_a$ has two images under $Cov_0$, namely $c_a$ itself and another point, $d_a$ (in the $Z$-coordinate $c_a=-1$ and $d_a=2$). The correspondence $\F_a$ maps $c_a$  to the pair $\{J_a(c_a),J_a(d_a)\}$. The critical value $v_a$ is $J_a(d_a)$ (the value of $\F_a(c_a)$ which has unique pre-image $c_a$).

The point $c_a$ cannot lie in $\Delta_Q$, since $c_a$ is a critical point of $Q$, and a transversal for $Q$ cannot contain a critical point in its interior. Thus
$$c_a\in \widehat\C\setminus\Delta_Q = \Lambda_{a,-}\cup\bigcup_{n \ge 1}\F_a^{-n}(\Delta'_a\cup J_a\Delta'_a).$$
Now consider $d_a$. Necessarily $d_a\in \overline\Delta_Q$, since $\Delta_Q$ is the interior of a transversal to $Q$ that does not contain $c_a$, and $\{c_a,d_a\}$ is an orbit of $Cov$. Observing that
$$\overline\Delta_Q=(\Delta'_a\cup J_a\Delta'_a)\cup  J_a(\widehat\C\setminus\Delta_Q),$$
we see that either $d_a\in (\Delta'_a\cup J_a\Delta'_a)$, and hence so does $v_a=J_a(d_a)$, or else $d_a\in J_a(\widehat\C\setminus\Delta_Q)$, in which case
$$v_a \in \widehat\C\setminus\Delta_Q=\Lambda_{a,-}\cup\bigcup_{n\ge 1} \F_a^{-n}(\Delta'_a\cup J_a\Delta'_a).$$
Thus in every case we have
$$v_a \in \Lambda_{a,-}\cup\bigcup_{n\ge 0} \F_a^{-n}(\Delta'_a\cup J_a\Delta'_a).$$
If $a\in\K\setminus\m$ we know that $c_a\notin \Lambda_{a,-}$ and $v_a\notin \Lambda_{a,-}$. So in this case
$$c_a \in \bigcup_{n\ge 1} \F_a^{-n}(\Delta'_a\cup J_a\Delta'_a)\  \   {\rm  and} \ \  v_a\in \bigcup_{n\ge 0} \F_a^{-n}(\Delta'_a\cup J_a\Delta'_a).$$
\end{proof}

We note that it follows from the Lemma that when $a\in \K\setminus\m$ the forwards critical point $c_a$ of $\F_a$ cannot be in the image $\F_a^n(J(c_a))$ of the backwards critical point $J(c_a)$ for any $n>0$. We will see later that this is no longer necessarily the case when $a$ lies on the {\it boundary} of $\K$ and  the Klein combination domains become pinched.\\

Specialising to the case that $a\in \mathcal D =\{a:|a-4|<3\}$ and the standard Klein combination pair, we next note a simple necessary and sufficient condition for $v_a$ to lie in the `tile' $J_a(\overline\Delta^{st}_a)$, rather than the other tiles listed in the Lemma above:

\begin{lemma}\label{small_disc}
Let $a\in \mathcal D \setminus \m$. Then $v_a\in J(\overline\Delta^{st}_a)\ \ \Leftrightarrow\ \ a\in \overline{\D(3/2,1/2)}.$
\end{lemma}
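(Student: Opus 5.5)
Since $v_a=J_a(d_a)$ with $d_a=2$ (in the $Z$-coordinate) and $J_a$ is an involution, we have $v_a\in J_a(\overline\Delta^{st}_a)$ if and only if $2\in\overline\Delta^{st}_a=\overline{\Delta^{st}_{J_a}\cap\Delta^{st}_Q}$. The plan is to show that $2$ always lies in the interior $\Delta^{st}_Q$. Then the condition reduces to $2\in\overline{\Delta^{st}_{J_a}}$, and this is an explicit inequality on $a$.

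\emph{Step 1: $2\in\Delta^{st}_Q$.} The curve $C'$ meets the real axis only at $Z=1$. This holds because on the real axis $Q^{-1}((-\infty,-2])$ consists of $(-\infty,-2]$ together with the point $1$, where $Q(1)=-2$. The component of $\widehat\C\setminus C'$ to the right of $C'$ contains the real ray $(1,\infty)$, so $2\in\Delta^{st}_Q$. Since $\Delta^{st}_Q$ is open, a point lying in $\Delta^{st}_Q$ and in $\overline{\Delta^{st}_{J_a}}$ lies in the closure of the intersection. Hence $2\in\overline\Delta^{st}_a$ if and only if $2\in\overline{\Delta^{st}_{J_a}}$, that is, if and only if $2$ lies on or outside the round circle through $1$ and $a$ centred on $\R$.

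\emph{Step 2: locate the circle.} Write $a=x+iy$. For $a\in\mathcal D$ we have $x>1$. The centre $c\in\R$ is determined by $|c-1|=|c-a|$, which gives
$$c=\frac{x^2+y^2-1}{2(x-1)},\qquad c-1=\frac{|a-1|^2}{2(x-1)}>0 .$$
So the circle meets $\R$ exactly in $1$ and $2c-1>1$, and its interior meets $\R$ in the interval $(1,2c-1)$. Therefore $2$ lies on or outside the circle if and only if $2\ge 2c-1$, i.e.\ $c\le 3/2$.

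\emph{Step 3: algebra.} Since $x-1>0$, the condition $c\le 3/2$ is equivalent to $x^2+y^2-1\le 3(x-1)$. This rearranges to $(x-3/2)^2+y^2\le 1/4$, which says exactly that $a\in\overline{\D(3/2,1/2)}$. The hypothesis $a\notin\m$ plays no role, beyond placing us in the setting of Lemma \ref{fund_lemma}.

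The only point needing care is Step 1, together with the boundary case where $2$ lies on the circle. Both are handled by the openness of $\Delta^{st}_Q$ and the fact that $2\neq P$.
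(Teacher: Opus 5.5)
Your proof is correct and follows the same route as the paper. Using the involution $J_a$, you reduce the question to whether $2\in\overline\Delta^{st}_a$, that is, whether $2$ lies outside the open round disc bounded by $\partial\Delta^{st}_{J_a}$. You also make explicit two points the paper's one-line proof leaves implicit: that $2$ lies in the open set $\Delta^{st}_Q$ (so only the $J_a$-disc matters), and the computation of the circle's centre $c$, showing that $c\le 3/2$ is exactly the condition $a\in\overline{\mathbb{D}(3/2,1/2)}$.
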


\begin{proof}
We work in the $Z$-coordinate in the dynamical plane, so $v_a$ is the point $J_a(2)$. By definition $\Delta^{st}_{J_a}=\widehat\C\setminus \overline D_a$, where $D_a$ is the open round disc which has centre on the real axis and boundary passing through $1$ and $a$. The critical value $J_a(2)$ is in $J_a(\overline\Delta^{st}_a)$ if and only if $2\in\widehat\C\setminus D_a$, in other words if and only if $a\in \overline\D(3/2,1/2)$. 
\end{proof}

\subsection{The canonical map $\Psi:\mathcal D \setminus\m \to \HH$}\label{canon} 

\begin{figure}
\begin{center}
\scalebox{.40}{\includegraphics{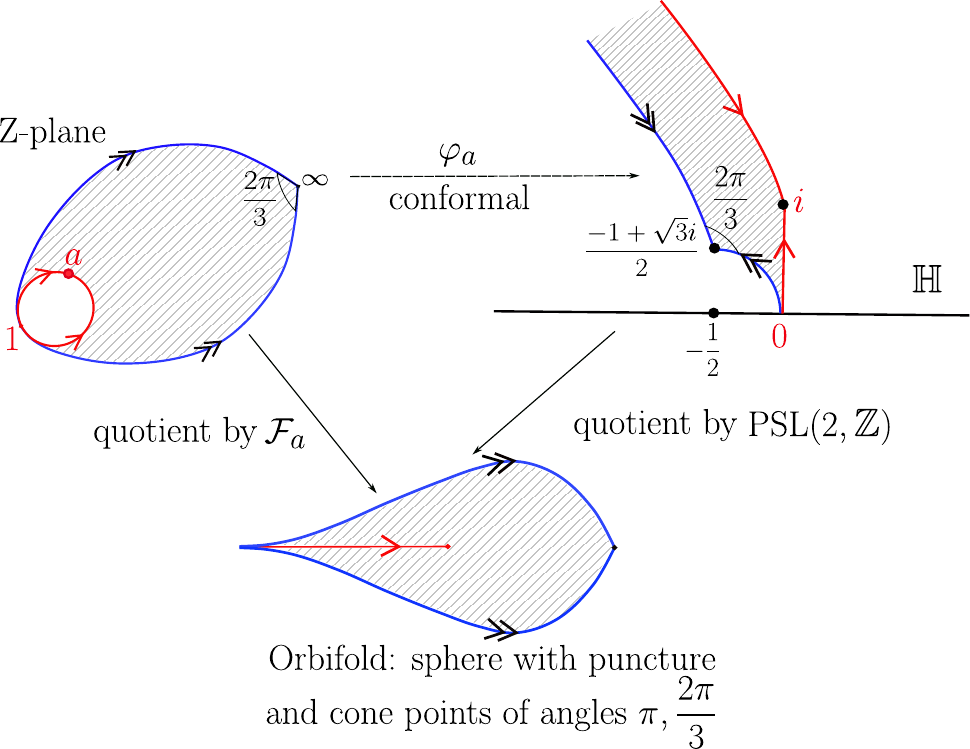}}
\caption{The partial B\"ottcher map $\varphi_a$}\label{Boett}
\end{center}
\end{figure}

Recall that the grand orbit space $\Omega_a/\langle \F_a\rangle$ is uniquely conformally isomorphic to $\HH/PSL(2,\Z)$ (Section \ref{prelim}). For $a\in \mathcal D$ we define a partial B\"ottcher map 
$$\varphi_a:\overline\Delta^{st}_a\cup J_a(\overline\Delta^{st}_a)\to \HH$$ 
as follows (Figure \ref{Boett}). For each $Z \in \overline\Delta_a^{st}\setminus\{P\}$ choose a path $\ell$ in $\overline\Delta_a^{st}$ from the fixed point $a$ of $J_a$ to $Z$. Project to the orbit space $\Omega_a/\langle \F_a\rangle$ and lift to a path $\ell'$ in $\HH$ starting at the fixed point $i$ of $\sigma$ (there are two possible lifts, in opposite directions: we make the `obvious' choice, 
the one which for $\ell$ a path from $a$ to $\infty$ lifts to $\ell'$ a path from $i$ to $(-1+i\sqrt{3})/2$). Define $\varphi_a(Z)$ to be the end point of $\ell'$. This recipe gives us a well-defined 
conformal lift $\varphi_a$ of $\Delta^{st}_a\setminus\{P\}$ to $\HH$. We can extend $\varphi_a$ to $J_a(\Delta_a^{st}\setminus\{P\})$ by equivariance with respect to $J_a$ on $\Omega(\F_a)$ and $\sigma$ on $\HH$. Moreover we can iteratively extend $\varphi_a$ to $\F_a^{-n}(\Delta^{st} \cup  J(\Delta^{st})\setminus\{P\})$ up to and including $v_a$ (we can even reach $c_a$ from one side) using equivariance with respect to $\F_a$ on $\Omega(\F_a)$ and $(\sigma\rho, \sigma\rho^{-1})$ on $\HH$. \\

\begin{defi}\label{Psi}
For each $a\in\mathcal D\setminus \m$ define $\Psi(a):=\varphi_a(v_a)\in \HH$.
\end{defi}

\begin{prop}\label{DHmap}
The assignment $a \to \varphi_a(v_a)$ defines a canonical continuous map $$\Psi: \D(4,3) \setminus \m \to \HH.$$
\end{prop}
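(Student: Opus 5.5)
We need to prove three things: $\Psi$ is well defined, it is canonical in the sense of Definition \ref{canonical}, and it is continuous on $\D(4,3)\setminus\m$.

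\textbf{Well-definedness and canonicity.} By Lemma \ref{fund_lemma}, applied to the standard pair (which is a valid Klein combination pair for $a\in\mathcal D$, with transversality at $P$), the critical value satisfies $v_a\in\F_a^{-n}(\Delta'_a\cup J_a\Delta'_a)$ for some $n\ge 0$. On $\Delta'_a$ the partial B\"ottcher map is a lift of the projection $\Delta'_a\to\Omega_a/\langle\F_a\rangle$ composed with $\Theta_a$. The lift is pinned down by the normalisation $a\mapsto i$ together with the choice of direction $\infty\mapsto(-1+i\sqrt3)/2$. Since $\overline\Delta^{st}_a\setminus\{P\}$ is simply connected and contains no cone points in its interior, different choices of path $\ell$ give the same endpoint.

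The extension to $J_a\Delta'_a$ and then to the backward images is forced by equivariance. For $v_a\in\F_a^{-n}(\Delta'_a\cup J_a\Delta'_a)$, the relevant inverse branch of $\F_a^{n}$ is single-valued along the chain of tiles leading to $v_a$: the only branching of $\F_a^{-1}$ is at the critical point, and by Lemma \ref{fund_lemma} this lies strictly deeper in the chain than $v_a$. We must also check that when $v_a$ lies on a common edge of two tiles, the two equivariant extensions agree. This holds because the side-pairings of $\Delta'_a$ correspond exactly to the side-pairings of the fundamental domain of $\Gamma$ under $\varphi_a$.

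Every value obtained this way is a lift of $\Theta_a([v_a])$, because each step commutes with projection to the orbifolds. Hence $\Psi$ is canonical once continuity is established.

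\textbf{Continuity.} We first show that the data depend continuously on $a$.
\begin{itemize}
\item The boundary of $\Delta^{st}_a$ (the circle through $1$ and $a$, and the fixed curve $C'$) moves continuously with $a$.
\item The side-pairing maps $J_a$ and $Cov_0^Q$ vary holomorphically.
\item Therefore the uniformising map $\varphi_a|_{\Delta'_a}$ varies continuously, uniformly on compact subsets away from $P$. To see this, view $\varphi_a^{-1}$ on the closed hyperbolic triangle-type fundamental domain as the unique conformal map with prescribed vertices and prescribed boundary identifications. A normal-families and uniqueness argument gives locally uniform convergence: any limit of the maps $\varphi_{a_k}$ is a conformal map realising the side-pairings for $\F_{a}$ with the same marked points, so by uniqueness of $\Theta_a$ it equals $\varphi_a$.
\end{itemize}

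Next, fix $a_0$ and suppose $v_{a_0}$ lies in the interior of a tile $T_{a_0}=W_{a_0}(\Delta'_{a_0})$, where $W$ is a fixed word of inverse branches. For $a$ near $a_0$, the point $v_a$ lies in the corresponding tile $W_a(\Delta'_a)$, and
\[
\Psi(a)=g_W\circ\varphi_a\circ W_a^{-1}(v_a),
\]
where $g_W\in\Gamma$ is the element corresponding to $W$. This expression is continuous in $a$.

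\textbf{Main obstacle: $v_{a_0}$ on tile boundaries.} The hard case is when $v_{a_0}$ lies on an edge or vertex shared by several tiles, so that nearby $v_a$ can fall into different tiles. Two situations need separate treatment.
\begin{itemize}
\item On an edge, the two adjacent formulae must agree on the common boundary; this follows from the equivariance of the side-pairings noted above.
\item At a vertex (an image of the cone point $a$, of $\infty$, or of $c_a$), several tiles meet. Here we use the fact that the formulae patch into a single continuous map on a neighbourhood of $v_{a_0}$ in the union of those tiles. This neighbourhood is an honest disc, since there is no ramification of $\varphi_a$ at a vertex that is not a critical point of the relevant branch. The case $v_{a_0}$ on $\partial J_a\Delta'_a$ with $a_0\in\partial\D(3/2,1/2)$, from Lemma \ref{small_disc}, is the prototype.
\end{itemize}
I would also verify that $v_a$ can never coincide with a ramified preimage of $c_a$ when $a\notin\m$. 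This again follows from Lemma \ref{fund_lemma}, since $c_a$ sits strictly deeper in the backward tiling than $v_a$. Consequently the local sheets glue continuously, and $\Psi$ is continuous.
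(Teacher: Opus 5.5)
Your proposal is correct and follows the same route as the paper. The paper gets continuity from the joint continuity of $(a,z)\mapsto\varphi_a(z)$ together with continuity of $a\mapsto v_a$, and canonicity from the fact that $\varphi_a$ is extended dynamically (equivariantly) up to $v_a$. Your normal-families justification of joint continuity, your single-valuedness argument via Lemma \ref{fund_lemma}, and your treatment of $v_a$ on tile boundaries make explicit what the paper's two-line proof leaves implicit. The normal-families step is best phrased for $\varphi_{a_k}$ on the moving domains $\Delta^{st}_{a_k}$: their images are varying based fundamental domains, not a fixed hyperbolic triangle.
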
 

\begin{proof}
The map $\Psi$ is continuous, since $z \to \varphi_a(z)$ is jointly continuous in $a$ and $z$ and $a \to v_a$ is continuous. The homeomorphism $\varphi_a$ was extended dynamically to include $v_a$ in its domain, so $\Psi$ is {\it canonical} (i.e. satisfies Definition \ref{canonical}).
\end{proof}

\begin{remark}

(i) 
The conformal map $\varphi_a$ sends $\Delta_a^{st}$ onto a Jordan fundamental domain $\Delta_{a,\Gamma}$ for $\Gamma:=PSL(2,\Z)$ on $\HH$ (such that $\Delta_{a,\Gamma}$ has standard vertices).

(ii)
We shall prove later (Theorem \ref{psi_analytic}) that $\Psi$ is analytic. The projection $\HH\to \HH/PSL(2,\Z)$ has an explicit analytic expression as the $j$-function. It is an intriguing question as to whether there might be an explicit expression for $\Psi$ involving this ubiquitous classical function.

\end{remark}

\begin{prop}\label{reala}
The image of the real interval $\{a:1 < a < 4\}$ under $\Psi$ is the subset of the unit semicircle in $\HH$ from $z=(1+i\sqrt{3})/2\in \HH$ to $z=-1\in \partial\HH$. In particular $\lim_{a\to 4}\Psi(a)=-1\in \partial\HH$, $\Psi(3)=(-1+i\sqrt{3})/2$, $\Psi(2)=i$ and $\lim_{a\to 1}\Psi(a)=(1+i\sqrt{3})/2$.
\end{prop}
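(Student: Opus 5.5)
The plan is to exploit the real symmetry of the family. For real $a$, complex conjugation $\kappa:Z\mapsto\overline Z$ commutes with $J_a$ (whose fixed points $1,a$ are real) and with $Cov_0^Q$ (since $Q$ has real coefficients). It also preserves the standard pair $(\Delta^{st}_{J_a},\Delta^{st}_Q)$: the circle bounding $D_a$ is centred on the real axis, and $C'$ is symmetric about it.

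\textbf{Step 1: $\varphi_a$ conjugates $\kappa$ to reflection in the unit circle.} Because $\kappa$ preserves all the side pairings, it descends to an anticonformal involution of the orbifold $\Omega_a/\langle\F_a\rangle$ that fixes the three marked points. By uniqueness of the conformal isomorphism with $\HH/PSL(2,\Z)$, this involution corresponds to the anticonformal involution of $\HH/PSL(2,\Z)$ fixing the three marked points. On the lifted domain $\varphi_a(\Delta^{st}_a)$ this is realised by the reflection $z\mapsto 1/\overline z$ in the unit circle, which fixes both $i$ and $(-1+i\sqrt3)/2$. I would check this by lifting along the real segment $[a,+\infty]$. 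That segment is the real part of $\overline\Delta^{st}_a$: it lies outside the circle through $1$ and $a$ and to the right of $C'$. It joins the fixed point $a$ of $J_a$ to the fixed point $\infty$ of $Cov_0^Q$. By the normalisation chosen in Section~\ref{canon}, it is therefore sent by $\varphi_a$ to the unit-circle arc from $i$ to $(-1+i\sqrt3)/2$.

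\textbf{Step 2: real points of the relevant tiles map into the unit circle.} $J_a$ maps the real line to itself and exchanges $[a,\infty]\cup[-\infty,1]$ with $[1,a]$. Hence the real points of $J_a(\overline\Delta^{st}_a)$ form the interval $(1,a]$. These are sent by $\varphi_a$ into the image of the previous arc under $\sigma(z)=-1/z$, which preserves the unit circle and sends $(-1+i\sqrt3)/2$ to $(1+i\sqrt3)/2$. Equivariance under $\sigma\rho$ and $\sigma\rho^{-1}$ extends this to the real points of the further tiles $\F_a^{-n}(\cdot)$, because all branches involved commute with $\kappa$ and the corresponding elements of $\Gamma$ commute with reflection in the unit circle, at least on the relevant arcs. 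Since $v_a=J_a(2)$ is real, it follows that $\Psi(a)$ lies on the unit semicircle.

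\textbf{Step 3: special values.} At $a=2$ we have $v_a=J_2(2)=2$, the fixed point of $J_a$, so $\Psi(2)=i$. At $a=3$, $J_3(Z)=(4Z-6)/(2Z-4)$ gives $v_a=\infty$, the $Cov$ cone point, so $\Psi(3)=(-1+i\sqrt3)/2$ by the normalisation of the lift. As $a\to1$, Lemma~\ref{small_disc} puts $v_a$ in $J_a(\overline\Delta^{st}_a)$, and $v_a$ tends to the $\sigma$-image of the $\infty$-vertex, giving $\Psi(a)\to(1+i\sqrt3)/2$. As $a\to4$, the parameter approaches the root of $\m$ on the boundary of $\mathcal D$. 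There $v_a$ needs ever more pullbacks $\F_a^{-n}$ to reach a tile, so $\Psi(a)$ should leave every compact subset of $\HH$ along the semicircle and tend to the cusp $-1$.

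\textbf{Step 4: the image is exactly the stated arc.} Continuity (Proposition~\ref{DHmap}) and connectedness then give that the image contains the whole arc from $(1+i\sqrt3)/2$ to $-1$. To show the image is no more than this arc, I would prove that $\Psi$ restricted to $(1,4)$ is monotone. The argument would follow $v_a$ as it moves monotonically along the real line relative to the tile vertices $J_a(2)$, $a$ and $\infty$, together with their real preimages.

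The main obstacle is the endpoint $a\to4$. It requires controlling how many preimage tiles $v_a$ passes through, and showing that the corresponding lifts converge to the cusp $-1$ rather than wandering along the circle. I would attempt this by identifying the real tiles with the successive images of the fundamental arc under the parabolic element of $\Gamma$ fixing $-1$, which corresponds to the parabolic fixed point $P$.
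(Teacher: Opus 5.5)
Your Steps 1--2 and the values at $a=2,3$ are the paper's argument: the conjugation symmetry of the standard pair forces $\varphi_a$ to intertwine $Z\mapsto\bar Z$ with $z\mapsto 1/\bar z$, so the real point $v_a$ lands on the unit semicircle. There are two slips. First, the real points of $\overline{\Delta^{st}_a}$ are only $[a,+\infty]$, because $(-\infty,1)$ lies to the left of $C'$; so the real points of $J_a(\overline{\Delta^{st}_a})$ form $[(a+1)/2,a]$, not $(1,a]$. Second, reflection in the unit circle conjugates $\rho$ to $\rho^{-1}$ rather than commuting with it; this matches the way complex conjugation swaps the two branches of $Cov_0^Q$.

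\textbf{The gap at $a\to4$.} The genuine gap is at the endpoints, and your diagnosis at $a\to4$ is wrong. The point $a=4$ is the centre of $\mathcal D=\D(4,3)$ and a Misiurewicz point of $\m$; the root is $a=7$. So the standard pair does not degenerate there, and $v_a$ does not need ever more pullbacks. For $a\in(3,4)$ you have $v_a=2/(3-a)\in(-\infty,-2)=L$. Since $Q$ maps $L$ into itself and $Q(Z)+2=(Z-1)^2(Z+2)$, the set $Cov_0^Q(v_a)$ is a conjugate pair on $C'\setminus\{1\}\subset\partial\Delta^{st}_a$. Thus $v_a\in\F_a^{-1}(J_a\Delta'_a)$, with one pullback for every such $a$. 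Writing $T=\varphi_a(\Delta^{st}_a)$, the lift of $v_a$ lies on the common edge of $\rho(T)$ and $\rho^{-1}(T)$, which is the unit-circle geodesic from $(-1+i\sqrt3)/2$ to $-1=\rho(0)=\rho^{-1}(\infty)$. As $a\to4$, $v_a\to-2$ and $Cov_0^Q(v_a)\to 1=P$; this is the relation $\F_4(-2)=1$. So $\Psi(a)$ runs along this single edge into the cusp $-1$, and this Misiurewicz relation is the paper's argument. The parabolic stabiliser of $-1$ plays no role. Its images of the arc leave the unit circle, contradicting your own Step 2, so the plan you sketch for the ``main obstacle'' would not succeed.

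\textbf{The limit $a\to1$.} This limit is asserted rather than proved. In the $Z$-plane both $v_a=2/(3-a)$ and the vertex $J_a(\infty)=(a+1)/2$ tend to the cusp $P=1$ while $D_a$ collapses. So ``$v_a$ tends to the $\sigma$-image of the $\infty$-vertex'' needs a rescaled picture in which the tile stays non-degenerate. The paper supplies this with the coordinate $\zeta=1/z$. There $J_a$ is $\zeta\mapsto-\zeta$, the cone point $Z=\infty$ sits at $\zeta=-1$, and $v_a$ is $\zeta=2-a\to1=J_a(-1)$. Moreover $\Delta_a$ converges to a non-degenerate triangle near these points. Hence $[v_a]$ tends to the order-$3$ cone point, and $\Psi(a)\to\sigma\bigl((-1+i\sqrt3)/2\bigr)=(1+i\sqrt3)/2$.

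\textbf{Step 4.} The monotonicity argument is unnecessary. Locating $v_a$ tile by tile already confines $\Psi((1,4))$ to the stated arc: $v_a\in[(a+1)/2,a]$ for $1<a\le2$, $v_a\in[a,+\infty]$ for $2\le a\le3$, and $v_a\in L$ for $3<a<4$. Continuity together with the two endpoint limits then gives equality.
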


\begin{proof}
For each $a\in (1,4)$, the set $\Delta^{st}_a$ is symmetric under complex conjugation. Hence its image under $\varphi_a$, in the half-plane model of $\HH$, is our standard fundamental domain for $PSL(2,\Z)$,  invariant under reflection in the unit semicircle. As $v_a$ lies on the real $Z$-axis, the point $\Psi(a)=\varphi_a(v_a)$ lies on this semicircle. It is easily verified that  the values of $\Psi(2)$ and $\Psi(3)$ are as stated, since at $a=2$ we have $v_a=J_a(2)=2$, so $\varphi_a(v_a)$ is the fixed point of $\sigma$, and at $a=3$ the critical value $v_a$ is fixed by $Cov_0^Q$, so $\varphi_a(v_a)$ is the fixed point of $\rho$. That $\lim_{a\to 4}\Psi(a)=-1\in \partial\HH$ follows from the direct calculations that $\lim_{a\to 4}v_a=-2$ and $\F_4(-2)=1$, so that $\Psi$ extends continuously (on $\R$) to the Misiurewicz point $a=4 \in \m$. 

To prove that $\lim_{a\to 1}\Psi(a)=(1+i\sqrt{3})/2 \in \HH$, we will first change the dynamical space coordinate $Z$ to a coordinate in which we can more easily evaluate $\varphi_a$ as $a$ tends to $1$. We define $\zeta:=1/z$, where $Z=(az+1)/(z+1)$. In the coordinate $\zeta$ on the Riemann sphere, $J_a$ is $\zeta \leftrightarrow -\zeta$ with fixed points $\zeta=0$ and $\zeta=\infty$ (just as in the $z$-coordinate), the double fixed point $Z=\infty$ of $Cov^Q$  becomes $\zeta=-1$, the single fixed point of $Cov^Q$ becomes $\zeta=\infty$, and the critical value $Z=J_a(2)$ of $\F_a$ becomes $\zeta=2-a$. Thus when $a$ is real and in the interval $(1,2)$, our $\Delta_a$ is an isosceles triangle in the $\zeta$-plane with long side the imaginary axis and its other two sides meeting at $\zeta=-1$ at angles $\pm\pi/3$ to the real axis, and the critical value $v_a$ is the point $\zeta=2-a \in J(\Delta_a)$. It is now a triviality to observe that in the limit as $a$ goes to $1$, the point $v_a$ goes to $\zeta=+1$, and hence that in the hyperbolic metric on the orbifold $\Omega(\F_a)/\langle \F_a\rangle$ the critical value $v_a$ converges to $\zeta=1$. It follows that under the conformal map $\varphi_a$ from $\Delta_a\cup J_a(\Delta_a)$ to our standard $\Delta\cup\sigma(\Delta)$ for $PSL(2,\Z)$ on $\HH$, the image $\varphi_a(v_a)$, (in other words $\Psi(a)$) converges to $(1+i\sqrt{3})/2$ as $a$ goes to $1$.
\end{proof}


\section{Extending $\Psi$ to a larger domain}\label{extension}

The method by which we constructed a partial B\"ottcher map $\varphi_a$ for a given $a$ in $\mathcal D \setminus\m$ works equally well for any $a\in \K\setminus \m$: we may take any Klein combination pair $(\Delta_{J_a}, \Delta_Q)$ for $\F_a$, in place of the standard pair. Different choices of Klein combination pair for the same parameter $a$ may give us different values for $\varphi_a(v_a)$ (see Proposition \ref{Psi-1}, Section \ref{second_extension}).

\begin{defi}\label{psi} 
We adopt the notation $\psi(a)$ for $\varphi_a(v_a)$ when we are using Klein combination pairs other than the standard pair $(\Delta_Q^{st},\Delta^{st}_{J_a})$ to define $\varphi_a$, and reserve the notation $\Psi$ for the map defined in the previous section, Definition \ref{Psi}, or for its extension when this is known to be unique.
\end{defi}

A priori, we can only be sure that different choices of Klein combination pairs will give $\psi(a)$ in the same $PSL(2,\Z)$ orbit on $\HH$, since $v_a$ projects to the same point as $\psi(a)$ in $\Omega(\F_a)/\langle \F_a \rangle \cong \HH/PSL(2,\Z)$. But if we have two Klein combination pairs with homotopic boundary curves, and $v_a$ remains in the same tile (i.e. copy of $\Delta_a$) throughout the homotopy, it is clear that $\varphi_a(v_a)$ remains fixed, since the fibre $PSL(2,\Z)$ is discrete. By considering the equivariant extension of $\varphi_a$ which we constructed in order to define $\Psi$ (Definition \ref{Psi}), this argument generalises to prove:

\begin{prop} \label{def_fixed_a}
Given $a\in \K\setminus\m$, if $(\Delta^t_{J_a}, \Delta^t_{Cov})$ is a homotopy of Klein combination pairs for $\F_a$ such that $v_a \in \bigcup _{n\ge 0}\F_a^{-n}((\Delta_a^t)'\cup J_a(\Delta_a^t)')$ for all $t\in[0,1]$, then $\psi(a):=\varphi_a(v_a)$ remains fixed throughout the homotopy. $\qed$
\end{prop}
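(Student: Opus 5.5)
The plan is to show that $t\mapsto \psi_t(a):=\varphi^t_a(v_a)$ is continuous on $[0,1]$ and takes values in a single discrete set. Its image will therefore be connected and discrete, hence a single point.

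\textbf{Step 1: $\psi_t(a)$ stays in one discrete orbit.} For each $t$, $\varphi^t_a$ is built as in Section \ref{canon}. One starts from the pair $(\Delta^t_{J_a},\Delta^t_{Cov})$, lifts paths in $\overline{\Delta^t_a}$ beginning at the fixed point $a$ of $J_a$ to paths in $\HH$ beginning at $i$, and then extends equivariantly to $\F_a^{-n}((\Delta^t_a)'\cup J_a(\Delta^t_a)')$. The hypothesis guarantees that $v_a$ lies in this domain. By construction $\varphi^t_a(v_a)$ projects to $\Theta_a([v_a])\in\HH/PSL(2,\Z)$. Here $\Theta_a$ is the unique conformal isomorphism of marked orbifolds, and both $\Theta_a$ and the point $[v_a]\in\Omega_a/\langle\F_a\rangle$ are independent of $t$: the regular set and the orbit space do not depend on the choice of Klein combination pair, by the transversality discussion in Section \ref{prelim}. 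Hence every $\psi_t(a)$ lies in the fibre over one fixed point. That fibre is a single $PSL(2,\Z)$-orbit, which is a discrete subset of $\HH$.

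\textbf{Step 2: continuity in $t$.} Fix $t_0$. Then $v_a=W(x_{t_0})$ for some $x_{t_0}\in(\Delta^{t_0}_a)'\cup J_a(\Delta^{t_0}_a)'$ and some finite word $W$ in branches of $\F_a^{-1}$. Correspondingly, $\psi_{t_0}(a)=g_W(\varphi^{t_0}_a(x_{t_0}))$, where $g_W$ is the corresponding word in $\sigma\rho,\sigma\rho^{-1}$ (and $\sigma$). For $t$ near $t_0$, the hypothesis puts $v_a$ in some tile for $\Delta^t_a$. I will argue that for $t$ close to $t_0$ we may take the same word $W$, with $x_t$ depending continuously on $t$. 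The reason is that the tiles vary continuously with the boundary curves, and the tiling is locally finite away from $\Lambda_a$, which is closed and does not contain $v_a$.

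The next claim is that $\varphi^t_a(x_t)\to\varphi^{t_0}_a(x_{t_0})$. The maps $\varphi^t_a$ on $\overline{\Delta^t_a}\cup J_a(\overline{\Delta^t_a})$ are all lifts, normalised at $a\mapsto i$, of the same covering-type map $\Omega_a\to\Omega_a/\langle\F_a\rangle\cong\HH/PSL(2,\Z)$. Two such normalised lifts agree on the overlap of their domains wherever the defining paths are homotopic. Since the boundary curves move by a homotopy, for $t$ near $t_0$ the paths from $a$ to $x_t$ in $\overline{\Delta^t_a}$ can be chosen close to, and homotopic rel the orbifold structure to, paths in $\overline{\Delta^{t_0}_a}$ together with their $J_a/Cov$-translates. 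This gives continuity of $\varphi^t_a(x_t)$, and applying the fixed isometry $g_W$ preserves it.

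\textbf{Main obstacle.} The delicate point is when $v_a$ crosses a tile boundary during the homotopy, i.e.\ lies on some $\F_a^{-n}(\partial\Delta^{t_0}_a)$, so that the word $W$ changes. There I would use the side-pairings. On a shared edge, the equivariant extension was defined so that the two adjacent tiles' values agree: the edge identifications by $J_a$ and $Cov$ correspond to $\sigma$ and $\rho$. Consequently the two candidate words give the same value of $\psi$ at the crossing time, and continuity persists.

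With continuity established, $[0,1]$ is connected and the $PSL(2,\Z)$-orbit is discrete, so $t\mapsto\psi_t(a)$ is constant. $\qed$
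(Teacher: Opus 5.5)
Your proposal is correct and follows essentially the paper's route. The paper also argues that $\varphi_a(v_a)$ always lies in the single discrete $PSL(2,\Z)$-orbit over $\Theta_a([v_a])$ and varies continuously along the homotopy, with the equivariant extension handling $v_a$ passing between tiles, so it must be constant; your treatment of continuity within a tile and across tile edges just supplies the details the paper calls ``clear''.
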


We can allow the parameter $a$ to vary and apply the same argument to prove:
 
\begin{cor}\label{continuous_deformation}
Given any path $a_t\in\K\setminus\m$, $t\in [0,1]$, and a homotopy of Klein combination pairs $(\Delta^t_{J_{a_t}}, \Delta^t_{Cov})$ for $\F_{a_t}$ satisfying the condition that $v_{a_t} \in \bigcup _{n\ge 0}\F_{a_t}^{-n}(\Delta_{a_t}'\cup J_{a_t}\Delta_{a_t}')$ for all $t\in[0,1]$, the map $t \to \psi(a_t):=\varphi_{a_t}(v_{a_t})$ is continuous. $\qed$
\end{cor}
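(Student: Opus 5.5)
The plan is to show that $t\mapsto\psi(a_t)$ is locally constant modulo continuous motion, working on small subintervals of $[0,1]$ where everything can be compared to a single reference configuration. Fix $t_0\in[0,1]$ and put $\Delta^t:=\Delta^t_{J_{a_t}}\cap\Delta^t_{Cov}$. By hypothesis there is $n\ge 0$ with $v_{a_{t_0}}\in \F_{a_{t_0}}^{-n}((\Delta^{t_0})'\cup J_{a_{t_0}}(\Delta^{t_0})')$. First I will show that for $t$ near $t_0$ the same index $n$ (and the same branch, i.e. the same word in $Cov_0$ and $J$) still captures $v_{a_t}$, possibly after passing to an adjacent tile across a boundary. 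The key inputs are:
\begin{itemize}
\item $v_{a_t}$ moves continuously in $t$;
\item the curves $\partial\Delta^t$ and the branches of $\F_{a_t}^{-n}$ vary continuously away from the parabolic point and the critical points;
\item the hypothesis that $v_{a_t}$ lies in the union of tiles for every $t$.
\end{itemize}

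Next, on a small interval around $t_0$, I will build a continuous family of conformal identifications. The orbifolds $\Omega(\F_{a_t})/\langle\F_{a_t}\rangle$ are all spheres with the same three marked points. The uniformizing maps $\varphi_{a_t}$ on $\Delta^t\cup J_{a_t}\Delta^t$ are normalized by sending the fixed point of $J_{a_t}$ to $i$ and the fixed point of $Cov$ to $(-1+i\sqrt3)/2$. With this normalization they depend continuously on $t$, uniformly on compact subsets away from $P$. The reason is that the quotient conformal structures vary continuously, and the uniformization of a thrice-marked sphere is rigid, so continuity follows from the normal families argument used in Proposition \ref{DHmap}.

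The equivariant extension of $\varphi_{a_t}$ to $\F_{a_t}^{-n}$-images uses the fixed elements $(\sigma\rho,\sigma\rho^{-1})$ along a fixed word. Therefore $\varphi_{a_t}(v_{a_t})$ is a composition of continuously varying quantities, evaluated at a continuously varying point, and is continuous at $t_0$.

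The main obstacle is the case in which $v_{a_{t_0}}$ lies on the boundary of a tile, or passes through a vertex, so that the relevant word changes with $t$. For this case I will argue as in Proposition \ref{def_fixed_a}. The two adjacent tiles are related by a side pairing ($J$ or a branch of $Cov_0$), and $\varphi$ is defined on both by equivariance with the corresponding element $\sigma$ or $\rho^{\pm1}$. Hence the two extensions agree on the common edge. Since the fibre $PSL(2,\Z)$ is discrete, the values computed from either side coincide at $t_0$, which gives continuity across the switch.

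The behaviour near the parabolic point $P$ is excluded automatically, because the tiles are taken as $\Delta'=\overline\Delta\setminus\{P\}$ and $v_{a_t}$ stays in them. Compactness of $[0,1]$ then turns this local continuity at every $t_0$ into continuity of $t\mapsto\psi(a_t)$ on the whole interval. $\qed$
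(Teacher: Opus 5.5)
Your proposal follows essentially the same route as the paper. The corollary is obtained by rerunning the argument behind Proposition \ref{def_fixed_a} with $a$ allowed to vary, using three ingredients: joint continuity of $(a,z)\mapsto\varphi_a(z)$ along the homotopy, continuity of $a\mapsto v_a$, and consistency of the equivariant extension (discreteness of the $PSL(2,\Z)$ fibre) when $v_{a_t}$ crosses between tiles. Two small corrections: Proposition \ref{DHmap} contains no normal-families argument (it simply asserts joint continuity), so your Montel-plus-uniqueness sketch is your own addition rather than a citation; and the appeal to compactness of $[0,1]$ at the end is unnecessary, since continuity is a local property.
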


\subsection{A continuous extension of $\Psi$ from $\mathcal D$ to a larger pinched neighborhood of $\m$}

Let $D_1$ be the open disc in the $a$-plane having center $4+\sqrt{3}i$ and radius $2\sqrt{3}$, and let $D_2$ the open disc with center $4-\sqrt{3}i$ and radius $2\sqrt{3}$ (Figure \ref{D1andD2}, left).

\begin{figure}
\begin{center}
\includegraphics[width=9.0cm]{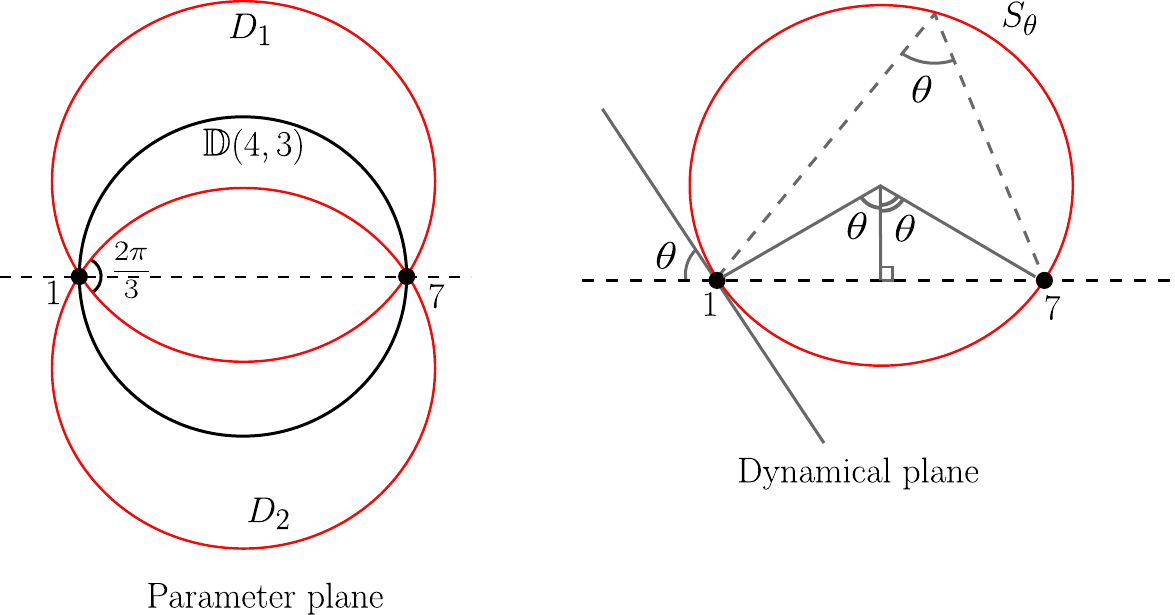}
\caption{The discs $D_1$ and $D_2$, and the circle $S_\theta$}\label{D1andD2}
\end{center}
\end{figure}

\begin{prop}\label{D1D2}
$\Psi$ extends (uniquely) to a canonical continuous map $D_1\cup D_2 \to \HH$.
\end{prop}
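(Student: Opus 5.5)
The plan is to enlarge the family of admissible Klein combination pairs by tilting both boundary curves at the parabolic point $Z=1$. I then transport $\Psi$ from $\mathcal D=\D(4,3)$ by the homotopy argument of Corollary \ref{continuous_deformation}.

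First, a check on the discs. The circles $\partial D_1$ and $\partial D_2$ both pass through $Z$-values $1$ and $7$, since $|1-(4\pm\sqrt3 i)|=2\sqrt3$. At $a=1$ their tangent directions are the vertical rotated by $\mp\pi/6$. Also $\D(4,3)\subset D_1\cap D_2$.

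\textbf{Step 1: tilted pairs.} For each angle $\theta$ in a suitable interval around $0$ (presumably $|\theta|<\pi/6$, matching the circle $S_\theta$ of the figure), I define a Klein combination pair $(\Delta^\theta_{J_a},\Delta^\theta_Q)$.
\begin{itemize}
\item For $J_a$, take $\Delta^\theta_{J_a}$ to be the exterior of the circle $S_\theta(a)$ through $1$ and $a$ whose tangent at $1$ makes angle $\theta$ with the vertical. Any circle through both fixed points of $J_a$ is $J_a$-invariant and its two complementary discs are interchanged by $J_a$, so this is a fundamental domain for $J_a$.
\item For $Cov^Q$, replace the ray $L=(-\infty,-2]$ by a curve $L_\theta$ from the critical value $-2$ to $\infty$, chosen so that the non-real component $C'_\theta$ of $Q^{-1}(L_\theta)$ passes through $1$ with tangent direction $\theta$. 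Let $\Delta^\theta_Q$ be the component of $\widehat\C\setminus C'_\theta$ to the right of $C'_\theta$. For instance, $L_\theta$ can be a ray leaving $-2$ at angle $2\theta$, since $Q(Z)+2$ vanishes to order $2$ at $Z=1$, so the tangent angle of $C'_\theta$ at $1$ is half that of $L_\theta$ at $-2$.
\end{itemize}

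\textbf{Step 2: verifying the Klein condition.} This is the main obstacle, and it is the analogue of \cite{BL1} Proposition 3.4. Fix $\theta$. The locus of $a$ for which the tilted circle $S_\theta(a)$ meets $C'_\theta$ only at $1$ should be the open disc bounded by the circle through $1$ and $7$ with tangent direction $\theta$ at $1$. The union of these discs over the admissible range of $\theta$ should be $D_1\cup D_2$, the extreme angles $\pm\pi/6$ giving $D_2$ and $D_1$.

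To prove this, I would first try to keep $C'_\theta$ as explicit as $C'$. If the tangency and disjointness computations become unwieldy, I would instead choose $L_\theta$ (and hence $C'_\theta$) adapted to the circles and check disjointness directly. I also need transversality at $Z=1$. The attracting–repelling axis there is the real direction, so a common tangent at angle $|\theta|<\pi/2$ from the vertical is transverse; at the boundary angles I would use the small perturbation of \cite{BL1} Proposition 3.9.

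\textbf{Step 3: extension and uniqueness.} Given $a\in (D_1\cup D_2)\setminus\m$, pick $\theta$ with $a$ in the corresponding disc, and join $a$ to a point of $\D(4,3)\setminus\m$ by a path in that disc avoiding $\m$. Deforming $\theta$ to $0$ along the path (and rechecking the Klein condition along the way) gives a homotopy of Klein combination pairs ending at the standard pair. Lemma \ref{fund_lemma} keeps $v_{a_t}$ in $\bigcup_{n\ge0}\F_{a_t}^{-n}(\Delta'_{a_t}\cup J_{a_t}\Delta'_{a_t})$ throughout. Corollary \ref{continuous_deformation} then continues $\Psi$ continuously along the path, and canonicity is preserved because $\varphi_a$ is extended equivariantly, exactly as in Proposition \ref{DHmap}.

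For uniqueness and well-definedness, observe that the fibres of $\HH\to\HH/PSL(2,\Z)$ are discrete. So two canonical continuous extensions that agree on $\D(4,3)\setminus\m$ agree on every connected open set containing it. It therefore suffices that $(D_1\cup D_2)\setminus\m$ be connected and simply connected, so that continuation does not depend on the path. This holds because $D_1\cup D_2$ is a topological disc and $\m$ is a connected, full compact set meeting $\partial(D_1\cup D_2)$ only at the root $a=7$. I will rely on the known picture of $\m\subset\overline{\D(4,3)}$ for this point.
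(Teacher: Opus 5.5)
Your overall plan is the paper's: tilted circles through $Z=1$ and $Z=7$ up to tilt $\pi/6$, nested circles through $1$ and $a$ tangent at $1$, and transport by Corollary~\ref{continuous_deformation}. But Step~2, which you flag as the main obstacle, is the substantive content of Proposition~\ref{D1D2}, and you do not supply it. Worse, the explicit choice you suggest fails.

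Write $u=Z-1$ and take a tilt $\theta>0$ from the vertical; the case $\theta<0$ is symmetric, using the other branch. The curve $C'_\theta$ over the straight ray from $-2$ in direction $\pi+2\theta$ lies in $\{\arg(u^2(3+u))=\pi+2\theta\}$. On the ray from $1$ at polar angle $\pi/2+\theta-\epsilon$, it sits at distance exactly $3\sin 2\epsilon/\cos(\theta-3\epsilon)$. A circle through $1$ with the same tangent and radius $R$ sits at distance $2R\sin\epsilon$ on that ray. Taking $\epsilon=\theta/3$ shows that such a circle meets or leaves $\Delta^\theta_Q$ as soon as $R\ge 3\cos(\theta/3)$. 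The tilted circle through $1$ and $7$, however, has radius $3/\cos\theta$.

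So the two curves osculate at $1$ but separate at third order: the ratio of the $C'_\theta$-distance to the circle-distance is $1-3\epsilon\tan\theta+O(\epsilon^2)$. Consequently, with straight rays no admissible circle has radius above $3$. For example, $a=4+5i\in D_1$ is reached for no tilt in your range $|\theta|<\pi/6$, since the circle through $1$ and $4+5i$ then has radius $>3.3$.

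What is actually needed is that $Q$ be injective on the closed disc bounded by the tilted circle $S_\theta$ through $1$ and $7$, that is, $S_\theta\cap Cov_0^Q(S_\theta)=\{1\}$, for all tilts up to $\pi/6$. The paper imports this from the Appendix of \cite{BL3}. It then takes $\partial\Delta^\theta_Q$ to be a Jordan curve separating $S_\theta$ from $Cov_0^Q(S_\theta)$, and nesting of the circles tangent at $1$ handles every $a$ inside $S_\theta$. Your fallback, ``choose $L_\theta$ adapted to the circles'', is precisely this missing fact. Nothing in the proposal explains why the admissible range reaches $\pm\pi/6$ beyond matching the figure.

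Your transversality argument is also wrong: the attracting--repelling axis at $Z=1$ is not the real direction when $a$ is non-real. The branch of $\F_a$ fixing $1$ is $u\mapsto u+\frac{a-7}{3(a-1)}u^2+O(u^3)$, so the axis has argument $\arg\frac{\bar a-7}{\bar a-1}$ (\cite{BL1}, Prop.~3.6). Geometrically, the axis is the tangent at $1$ to the circle through $1$, $7$ and $a$. Hence transversality fails for $a\in S_\theta$ and holds for $a$ strictly inside $S_\theta$. That is the condition you must check, and maintain along your Step~3 homotopy, as the paper does.

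Minor points:
\begin{itemize}
\item Your inclusion is reversed: in fact $D_1\cap D_2\subset\D(4,3)\subset D_1\cup D_2$.
\item The boundary tilts $\pm\pi/6$ are never needed, since $D_1$ and $D_2$ are open.
\item Your simple-connectivity argument for well-definedness in Step~3 is fine.
\end{itemize}
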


\begin{proof}

Let $a$ be any point of $D_1$ in the upper half-plane. Denote by $S_\theta$ the circle through $z=1$ and $z=7$ which has tangent at $z=1$ at angle $\theta$ to the horizontal, as shown on the right in
Figure \ref{D1andD2}. Then $a$ lies on $S_{\theta'}$ for a unique $\theta'\in (\pi/3,\pi/2]$, and for all $\theta\in(\pi/3,\theta')$ it lies in the open disc bounded by $S_\theta$. 

Let $\theta\in(\pi/3,\theta')$. In the Appendix to \cite{BL3} it is proved that when $\pi/3\le\theta\le\pi/2$, the circle $S_\theta$ only intersects its image $Cov_0(S_\theta)$ at $z=1$. As a consequence there is a Jordan curve $C_\theta$ which passes through $z=+1$, separating $S_\theta$ from $Cov_0(S_\theta)$ and bounding a fundamental domain $\Delta_Q^\theta$ for $Cov_0$. Let $S_{\theta_a}$ denote the circle through $z=1$ and $z=a$ which is tangent to $S_\theta$ at $z=1$. Then the unbounded component of $\C\setminus S_{\theta_a}$ is a fundamental domain $\Delta^\theta_{J_a}$ for $J_a$ which has boundary meeting that of $\Delta_Q^\theta$ only at $z=1$. To show that $(\Delta_Q^\theta,\Delta^\theta_{J_a})$ is a Klein combination pair it remains to check the transversality condition at the parabolic point. The argument of the parabolic axis at $z=1$ is $(\bar{a}-7)/(\bar{a}-1)$ (Proposition 3.6 in \cite{BL1}). But $Arg((a-7)/(a-1))<\theta$ by elementary geometry (since $a$ is in the disc bounded by $S_\theta$), so $Arg((\bar{a}-7)/(\bar{a}-1))<-\theta$, in other words the parabolic axis is transverse to the tangent to $S_\theta$.

We can choose the pair  $(\Delta_Q^\theta,\Delta^\theta_{J_a})$ to have boundaries moving continuously respect to $a$ and $\theta$. Let $\mathcal{U}$ denote the upper half-plane. Then the method of the preceding Section, together with Corollary \ref{continuous_deformation}, will give us an associated continuous canonical map $D_1 \cap \mathcal{U} \to \HH$, which, by Corollary \ref{def_fixed_a}, restricts to the function $\Psi$ of Definition \ref{Psi} at points $a\in \D(4,3)\cap \mathcal{U}$. Similarly $\Psi$ may be extended to $D_2$.
\end{proof}

\subsection{Extending $\Psi$ to $\D(0,1)$}\label{second_extension}

Recall that $\F_a$ is undefined at the parameter value $a=1$. 

\begin{prop}\label{D01}
$\D(0,1) \subset \K$, and $\Psi$ has extensions to $\D(0,1)$, both clockwise and anticlockwise around the puncture $a=1$.
\end{prop}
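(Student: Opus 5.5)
We prove both parts of Proposition~\ref{D01} by the method of Proposition~\ref{D1D2}: exhibit explicit Klein combination pairs depending continuously on $a$, and transport $\psi$ along paths with Corollary~\ref{continuous_deformation}.

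\textbf{Step 1: Klein combination pairs for $a\in\D(0,1)$.} We work in the $\zeta$-coordinate from the proof of Proposition~\ref{reala}. There $J_a$ is $\zeta\mapsto-\zeta$, the parabolic point $P$ is at $\zeta=0$, and the fixed points of $Cov^Q$ sit at $\zeta=-1$ and $\zeta=\infty$. The critical value is $v_a:\zeta=2-a$.
\begin{itemize}
\item For $J_a$ we take the half-plane bounded by a line $L_\alpha$ through $\zeta=0$ at angle $\alpha$ as the fundamental domain. This choice is independent of $a$.
\item For $Cov^Q$ we take the domain bounded by a Jordan curve through $\zeta=0$ and $\zeta=-1$. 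In the real case it is the pair of rays at angles $\pm\pi/3$ from $\zeta=-1$ used in Proposition~\ref{reala}.
\end{itemize}
The expression of $Cov_0^Q$ in the $\zeta$-coordinate depends on $a$, since $z=1/\zeta$ and $Z=(az+1)/(z+1)$. So we compute the images of the $\zeta$-plane rays under $Cov_0^Q$. We must show that for $|a|<1$ there is an angle $\alpha$ with two properties:
\begin{itemize}
\item the resulting curve meets $L_\alpha$ only at $0$;
\item the tangent direction at $0$ is transverse to the parabolic axis at $P$, whose direction is given by Proposition~3.6 of \cite{BL1}, transported to the $\zeta$-coordinate.
\end{itemize}
Where transversality fails only marginally, we use the perturbation result (\cite{BL1}, Proposition~3.9). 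This gives $\D(0,1)\subset\K$.

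\textbf{Step 2: $a\notin\m$, and continuity.} We check that $\D(0,1)\cap\m=\emptyset$, using the known location of $\m$ inside $\overline{\D(4,3)}$. Then Lemma~\ref{fund_lemma} applies throughout, so $v_a$ lies in some finite preimage tile. We next choose the pairs $(\Delta^t_{J_a},\Delta^t_Q)$ to vary continuously in $a\in\D(0,1)$, possibly after rotating $\alpha$ continuously with $\arg a$. Corollary~\ref{continuous_deformation} then gives a continuous canonical map on $\D(0,1)$, except where $v_a$ crosses a tile boundary; there we use the equivariant extension as in Section~\ref{canon}.

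\textbf{Step 3: joining to $D_1\cup D_2$.} We connect $\D(0,1)$ to $D_1\cup D_2$ by two paths in $\K\setminus\m$: $\ell^{\mathcal U}$ in the upper half-plane and $\ell^{\mathcal L}$ in the lower half-plane, both passing close to the puncture $a=1$. Along each path we build a homotopy of Klein combination pairs interpolating between the $\theta$-circle pairs of Proposition~\ref{D1D2} and the line pairs $L_\alpha$ of Step~1. In $\zeta$-coordinates the circle $S_\theta$ through $z=1$ becomes a circle through $\zeta=1$, so the pairs can be deformed continuously. Corollary~\ref{continuous_deformation} propagates $\Psi$ along each path, and the pairs of Step~1 spread the value over all of $\D(0,1)$. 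This produces the two extensions, one anticlockwise and one clockwise around $a=1$.

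\textbf{Main obstacle.} The hardest step is the geometric verification that a single family of pairs works uniformly near $a=1$ and across all of $\D(0,1)$. This means checking the non-intersection of $Cov_0^Q$ of the boundary curve with $L_\alpha$ away from $0$, together with transversality at $P$ as $a$ approaches $1$, where the correspondence degenerates. I would first try explicit rays in the $\zeta$-plane and an elementary argument-of-angle estimate, as in the proof of Proposition~\ref{D1D2}. If that fails, I would fall back on a compactness argument over compact subsets of $\D(0,1)$ together with local perturbations.
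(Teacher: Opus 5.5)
Your outline (explicit Klein combination pairs over $\D(0,1)$, joined to the standard pairs by homotopies of pairs along paths above and below $a=1$, then Corollary~\ref{continuous_deformation}) is the right one. However, neither substantive step is actually carried out, and Step~1 is set up at the wrong point.

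In the coordinate $\zeta=1/z$, the parabolic point $P$ ($Z=1$, i.e.\ $z=0$) is $\zeta=\infty$. The point $\zeta=0$ is $z=\infty$, i.e.\ the fixed point $Z=a$ of $J_a$. Your half-planes bounded by lines through $\zeta=0$ are legitimate: they are the two sides of circles through $Z=1$ and $Z=a$. But you require $\partial\Delta_Q$ to pass through $\zeta=0$, to meet $L_\alpha$ only there, and to be transverse there. That can never give a Klein combination pair. The point $Z=a$ is fixed by $J_a$, so it lies on $\partial\Delta_{J_a}\setminus\{P\}$, and hence must lie in the interior of $\Delta_Q$.

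More importantly, you never exhibit a transversal for $Q$. The observation that makes $\D(0,1)\subset\K$ immediate is that the open unit disc of the $Z$-plane is itself a transversal. Indeed, $Q(Z_1)=Q(Z_2)$ with $Z_1\ne Z_2$ forces $Z_1^2+Z_1Z_2+Z_2^2=3$, which is impossible if $|Z_1|,|Z_2|<1$. So a single $\Delta_Q\supset\D(0,1)$ works for every $b\in\D(0,1)$, with $\Delta_{J_b}$ the exterior of the circle through $1$ and $b$ centred on $\R$:
\begin{itemize}
\item its centre $(1-|b|^2)/(2(1-\mathrm{Re}\,b))$ lies in $(0,1)$, so the circle lies in the closed unit disc and meets the unit circle only at $1$;
\item transversality holds because the common tangent at $Z=1$ is vertical, while the parabolic axis, of direction $\arg((\bar a-7)/(\bar a-1))$, is vertical only on $|a-4|=3$.
\end{itemize}
Working in $\zeta$, where $Cov_0^Q$ moves with $a$, hides this. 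That is what pushes you toward angle estimates and a compactness fallback.

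The second gap carries the real content of the proposition: your Step~3 only asserts that the pairs ``can be deformed continuously'' around the puncture. As $a$ moves from $1+\delta$ to $1-\delta$ around $Z=1$, your circular $\partial\Delta_{J_a}$ bounds a small disc that must swing around $Z=1$ from one side to the other. Meanwhile $\partial\Delta_Q$, which is tangent to it at $Z=1$, must be moved accordingly through boundaries of transversals of $Q$. This is not a small perturbation of the standard curve $C'$.

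The paper does it in two moves.
\begin{itemize}
\item First, with $a=1+\delta$ fixed, it isotopes $C'$ so that $\partial\Delta_Q$ passes through the other critical point $-1$. One half becomes the upper unit semicircle followed by a $Cov_0^Q$-image of $[2,\infty)$ running from $-1$ to $\infty$. The other half becomes the $Cov_0^Q$-image of that semicircle, from $1$ to $2$, followed by $[2,\infty)$.
\item Second, it pushes the semicircle through circular arcs rel $\{\pm1\}$ down to the lower semicircle, while $a$ moves along $|a-1|=\delta$ and a small circle through $1$ and $a$ is kept inside $\Delta_Q$.
\end{itemize}
At the end $\Delta_Q\supset\D(0,1)$ and the construction above applies; the complex-conjugate construction gives the clockwise extension. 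An explicit isotopy of this kind is exactly the input Corollary~\ref{continuous_deformation} needs. Routing through the $\theta$-circle pairs of Proposition~\ref{D1D2} only relocates the problem, since those pairs must still be deformed into ones adapted to $\D(0,1)$.
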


\begin{figure}
\begin{center}
\includegraphics[width=5cm]{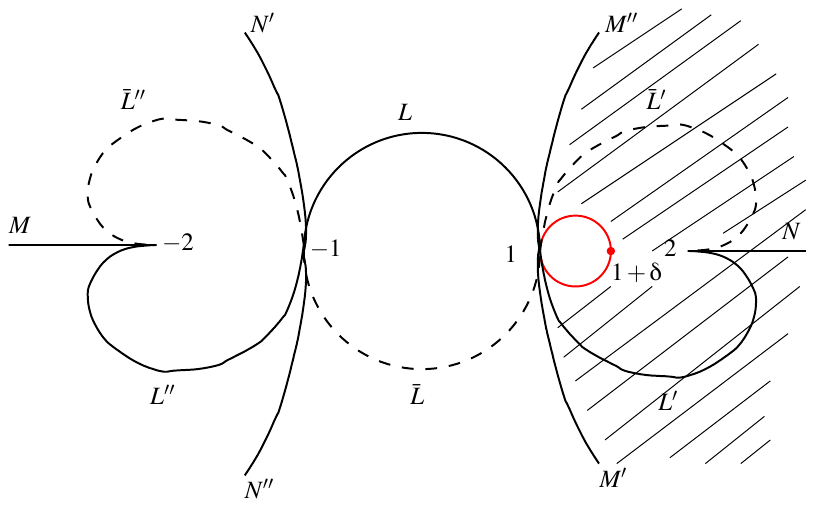}\hskip1cm\includegraphics[width=5cm]{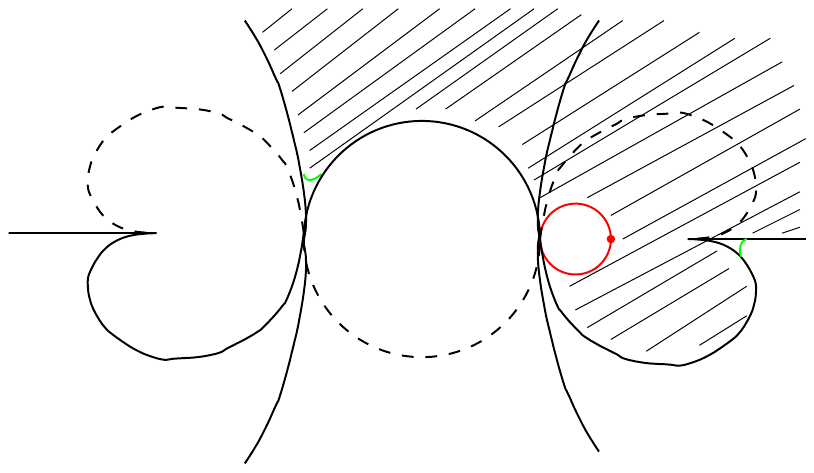}
\includegraphics[width=5cm]{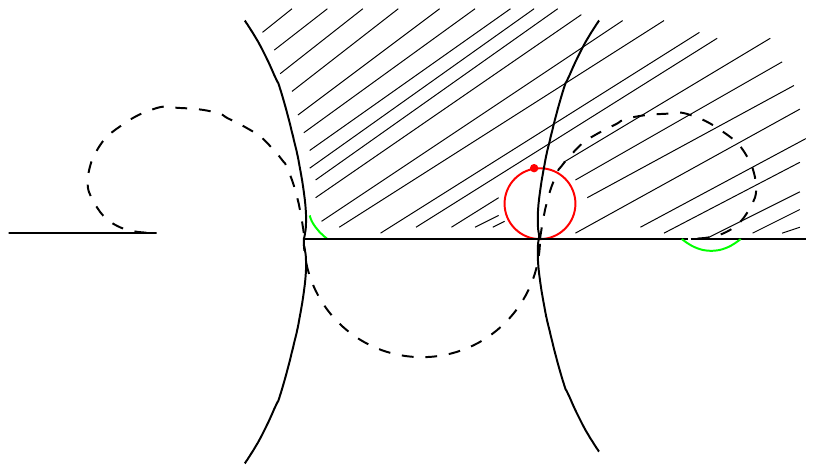}\hskip1cm\includegraphics[width=5cm]{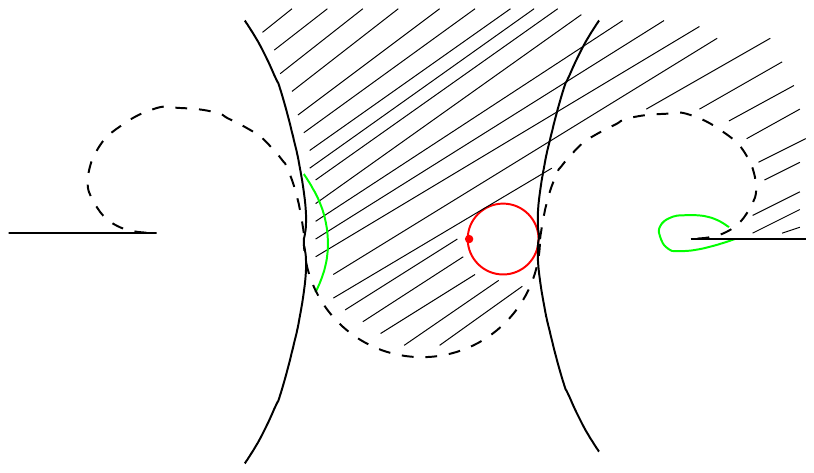}
\end{center}
\caption{\small The homotopy of domains in Proposition \ref{D01}. Top line: we move the boundary of $\Delta_Q$ continuously, keeping $a=1+\delta$ fixed (and $\Delta_{J_a}$ fixed).  Bottom line: we then move $L$ downwards, through arcs of circles, first to a straight line and then to the lower semicircle, simultaneously rolling $\Delta_{J_a}$ around $1$ until $a$ reaches $1-\delta$. (The green arcs are explained in Remark \ref{deform_rem}.)}\label{deform_pic}
\end{figure}

\begin{proof}

Let $b$ be any point in $\D(0,1)$. Starting at $a=1+\delta$ for any real $\delta$ with $0<\delta<1$ we will move $a$ continuously to $b$ along an anticlockwise path around the puncture point $a=1$, and continuously deform the standard Klein combination pair for $\F_a$ to a Klein combination pair for  $\F_b$. We proceed in three steps, all of which have obvious analogues for a clockwise path around $a=1$. As in Figure \ref{deform_pic}: $L$ and $\bar L$ denote the upper and lower halves of the unit circle; $L',L''$ are the $Cov_0^Q$-images of $L$, and $\bar L',\bar L''$ those of $\bar L$; $M=(-\infty,-2]$ and $M',M''$ are its $Cov_0^Q$ images; $N=[2,\infty)$ and $N',N''$ are its $Cov_0^Q$-images.

{\it Step 1: move $\partial \Delta_Q$, keeping $a$ and $\partial \Delta_{J_a}$ fixed.}  In the space bounded by $M'\cup L'\cup N'$, homotop  $M'$ rel. $\{0,\infty\}$ from its initial position to $L'\cup N$. Simultaneously (under the action of $Cov^Q$), the line $M''$ moves round to $L\cup N'$. The region bounded by $L'\cup N\cup L\cup N'$ is now our fundamental domain for $Cov^Q$.

{\it Step 2: move $L$ and $\partial \Delta_{J_a}$.} Homotop $L$ rel. $\{-1,1\}$, through arcs of circles, to the lower unit semi-circle $\bar{L}$, simultaneously moving $a$ anticlockwise around the circle of radius $\delta$ centred on $a=1$ so as to remain within the moving $\Delta_Q$. At the end of this step $\partial\Delta_Q$ is bounded by $N$, the arc complex conjugate to $L'$, $L$ and $N'$; $\partial\Delta_{J_a}$ is the circle which has centre $1-\delta/2$ and radius $\delta/2$, and $a$ is the point $1-\delta$.

{\it Step 3: keeping $\partial \Delta_Q$ fixed, move $a$ to $b$, and adjust $\partial\Delta_{J_a}$ to become a circle through $1$ and $b$ centred on the real axis.} This is straightforward as $Cov^Q$ now contains the unit circle.

\end{proof}

\begin{remark}\label{deform_rem}
In this proof, to keep the description of the procedure as simple as possible, our $\Delta_Q$ has the critical point $Z=-1$ on its boundary from Step 2 onwards. This causes no problem, but can be avoided: $\Delta_Q$ can be modified to put $Z=2$ into its interior, and hence move $Z=-1$ outside its closure, by adding to $\Delta_Q$ the region bounded by a small loop around $Z=2$, and keeping $\Delta_Q$ a transversal to $Q$ by subtracting from $\Delta_Q$ the $Cov_0^Q$-image of this region which contains  $Z=-1$ on its boundary (see the green arcs in Figure \ref{deform_pic}).
\end{remark}

\begin{cor}
$-1\in \partial\K$.
\end{cor}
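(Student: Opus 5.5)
To prove $-1\in\partial\K$ we need two things: that $-1$ is a limit of points of $\K$, and that $-1$ itself is not in $\K$. The first is immediate from Proposition \ref{D01}. Since $\D(0,1)\subset\K$ and $-1\in\partial\D(0,1)$, the point $-1$ lies in $\overline\K$. It therefore remains to show that $\F_{-1}$ admits no Klein combination pair satisfying the transversality condition.

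For the second point I would use the local dynamics at the parabolic fixed point $Z=1$. By \cite{BL1}, Proposition 3.6 (as quoted in the proof of Proposition \ref{D1D2}), the parabolic axis at $z=1$ has argument determined by $(\bar a-7)/(\bar a-1)$. I would first compute the multiplicity of the parabolic point at $a=-1$. There the expression $(\bar a-7)/(\bar a-1)$ becomes $4$, which is positive real, so the axis is horizontal. More importantly, I expect the leading coefficient of the parabolic germ to vanish at $a=-1$, making the fixed point degenerate. The natural way to detect this is to expand the branch of $J_a\circ Cov_0^Q$ fixing $Z=1$ to third order and check that the quadratic term vanishes exactly when $a=-1$. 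Then $Z=1$ has at least two attracting and two repelling petals.

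With more than one attracting petal and one repelling petal, two curves $\partial\Delta_{J}$ and $\partial\Delta_Q$ that are tangent at $1$ cannot separate the forward limit set from the backward limit set. Consider the complement of $\Delta_{J}\cup\Delta_Q$, which must reduce to the single point $\{1\}$. Then $\overline\Delta_Q\setminus\{1\}$ must be forward invariant near $1$, and so must contain every attracting direction, while the complement must contain every repelling direction. Near $1$, however, two tangent Jordan arcs split a neighbourhood into just two sectors. These cannot separate four alternating petal directions. Hence no Klein combination pair exists, and $-1\notin\K$.

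As a fallback, if the petal count turns out not to be degenerate, I would argue instead via the critical orbit. I would look for a critical relation at $a=-1$ in which $c_a$ lies in the forward orbit of $J(c_a)$. The remark following Lemma \ref{fund_lemma} says this is impossible for $a\in\K$.

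The main obstacle is the local computation at $Z=1$, $a=-1$. The fallback is to verify the critical orbit directly in the $Z$-coordinate, where $J_{-1}(Z)=(2)/(2Z)$, that is $Z\mapsto 1/Z$.
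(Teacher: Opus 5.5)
Your argument that $-1\in\overline\K$ is fine, but your main argument for $-1\notin\K$ rests on a false premise. Put $Z=1+u$. From $Q(1+u)=-2+3u^2+u^3$, the branch of $Cov_0^Q$ fixing $Z=1$ is $u\mapsto -u-u^2/3+O(u^3)$. Also $J_a(1+u)-1=-u/(1+\frac{2u}{1-a})$. So the branch of $\F_a=J_a\circ Cov_0^Q$ fixing $Z=1$ is
\[ u\mapsto u+\frac{7-a}{3(1-a)}\,u^2+O(u^3). \]
The quadratic term vanishes only at $a=7$, the root of $\m$; this is consistent with the axis formula $(\bar a-7)/(\bar a-1)$ you quote, which degenerates there. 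At $a=-1$ the coefficient is $4/3$. So $Z=1$ is a non-degenerate parabolic point of $\F_{-1}$, with one attracting and one repelling petal. The germ is also non-degenerate for all $a$ near $-1$, including the points of $\D(0,1)\subset\K$. Hence no purely local analysis at $Z=1$ can distinguish $a=-1$ from nearby parameters in $\K$, and the petal-separation argument never gets started.

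The obstruction is at $Z=-1$, which your fallback points toward but never pins down. Since $J_{-1}(Z)=1/Z$, the fixed point $Z=a$ of $J_a$ coincides with the critical point $c_{-1}=-1$. Also $\F_{-1}(-1)=\{-1,1/2\}$, so the relation you are looking for does hold, already with $J(c_a)=c_a$. But citing the remark after Lemma \ref{fund_lemma} is not enough as it stands. That remark is only established for $a\in\K\setminus\m$ (its proof uses $c_a\notin\Lambda_{a,-}$), so you would also have to rule out $-1\in\m$.

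The paper's argument avoids this and needs no petal analysis:
\begin{itemize}
\item For any Klein combination pair, every point of $\partial\Delta_{J_a}\setminus\{1\}$ lies in $\Delta_Q$, because $(\widehat\C\setminus\Delta_{J_a})\cap(\widehat\C\setminus\Delta_Q)=\{1\}$.
\item The fixed point $Z=a$ of $J_a$ lies on $\partial\Delta_{J_a}$, hence in the interior of the transversal $\Delta_Q$.
\item A critical point of $Q$ cannot lie in the interior of a transversal for $Q$. At $a=-1$ this point is $Z=-1$, a critical point of $Q$, so no Klein combination pair exists.
\end{itemize}
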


\begin{proof}
By Proposition \ref{D01}, $\D(0,1)\subset\K$, so $-1$ is in the closure of $\K$. To see that $-1\notin \K$, observe that for any $a$, if $(\Delta_Q,\Delta_{J_a})$ is a Klein combination pair for $\F_a$, the point $a$ is in the interior of $\Delta_Q$, since $a$ is on the boundary of $\Delta_{J_a}$, and every point of this boundary, except $Z=1$, is in the interior of $\Delta_Q$. But $-1$ is a critical point of $Q$, so cannot be in the interior of a transversal to $Q$.
\end{proof}

Let us denote by $\Psi^{\mathcal{U}}$ the extension of $\Psi$ to $\D(0, 1)$ obtained by moving anticlockwise around $a = 1$, and by $\Psi^{\mathcal{L}}$ the extension obtained by moving clockwise.

\begin{lemma}\label{cut_line}
(i) As $a$ approaches the puncture point $1$ of $\K$, from any direction, $\Psi(a)$ converges to $(1+i\sqrt{3})/2\in \HH$ (the fixed point of $\sigma\rho\sigma$).
(ii) There exists $g\in PSL(2,\Z)$ such that $\Psi^\mathcal{L}(a)=g\Psi^\mathcal{U}(a)$ for all $a\in \D(0,1)$.
(iii) The element $g$ of part (ii) is one of the set $\{Identity, \sigma\rho\sigma,\sigma\rho^{-1}\sigma\}\subset PSL(2,\Z)$ .
\end{lemma}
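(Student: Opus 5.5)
Part (i) is proved by extending the argument of Proposition \ref{reala} from the real segment $(1,2)$ to a full punctured neighbourhood of $a=1$. Work in the coordinate $\zeta=1/z$, in which $J_a$ is $\zeta\mapsto-\zeta$ and the fixed points of $Cov^Q$ sit at $\zeta=-1$ and $\zeta=\infty$. These normalisations do not depend on $a$, and the critical value is $\zeta=2-a$.

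For $a=1+\delta e^{i\theta}$ with $\delta$ small, the Klein combination pairs used to define $\Psi$ near $1$ can be chosen to converge, in the $\zeta$-coordinate, to the limiting configuration. This holds for the standard pair on $\D(4,3)$, the pairs of Proposition \ref{D1D2}, and the deformed pairs of Proposition \ref{D01} along a small circle about $1$. In the limit, $\Delta_a\cup J_a\Delta_a$ is bounded by the two lines through $\zeta=\pm1$, or their appropriate limits, and $v_a\to\zeta=1$, a boundary point corresponding to the image $Cov$-fixed point.

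I would then argue that the conformal maps $\varphi_a$ on $\Delta_a\cup J_a\Delta_a$ converge, by Carathéodory kernel convergence and the normalisation at the three marked vertices, to the limit map. That limit sends $\zeta=1$ to the fixed point $(1+i\sqrt3)/2$ of $\sigma\rho\sigma$. Hence $\Psi(a)\to(1+i\sqrt3)/2$ uniformly in the direction $\theta$.

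The main obstacle is the direction-independence. Along Step 2 of Proposition \ref{D01} the domain $\Delta_Q$ is being rotated, and I need $v_a$ to stay in a tile whose image under $\varphi_a$ is adjacent to the fixed point of $\sigma\rho\sigma$. I would handle this by checking that $v_a$ stays near $\zeta=1$, which is a fixed vertex of $J_a\Delta_a$ throughout the deformation. Then, by Corollary \ref{continuous_deformation}, $\psi$ stays continuous and stays close to the corresponding vertex in $\HH$.

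For (ii), both $\Psi^{\mathcal U}$ and $\Psi^{\mathcal L}$ are canonical on $\D(0,1)$. So for each $a$ they lie over the same point of $\HH/PSL(2,\Z)$, and there is some $g_a\in PSL(2,\Z)$ with $\Psi^{\mathcal L}(a)=g_a\Psi^{\mathcal U}(a)$. The set of $a$ with $g_a=g$ is closed by continuity, since the orbit is discrete. Away from the discrete set of $a$ at which $\Psi^{\mathcal U}(a)$ is an elliptic fixed point, $g_a$ is locally constant. At those elliptic points ambiguity in $g_a$ can occur, but the relation for a fixed $g$ extends by continuity across them. Connectedness of $\D(0,1)$ minus a discrete set then gives a single $g$.

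For (iii), evaluate near $a=1$ from inside $\D(0,1)$, i.e.\ along $a=1-\delta$. By (i) both extensions tend to $p=(1+i\sqrt3)/2$. If $\Psi^{\mathcal U}(a)$ is near $p$ and $g\Psi^{\mathcal U}(a)$ is also near $p$, then $g$ must fix $p$, since the orbit of a point near $p$ accumulates near $p$ only through the stabiliser. The stabiliser of $p$ is $\langle\sigma\rho\sigma\rangle\cong C_3$, namely $\{Id,\sigma\rho\sigma,\sigma\rho^{-1}\sigma\}$.
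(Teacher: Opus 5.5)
Your outline follows the paper. Parts (ii) and (iii) are exactly its continuity/canonicity and stabiliser arguments; the discreteness you invoke in (ii) is supplied by Proposition~\ref{CRinK} together with the discreteness of $CR$. Part (i) rests on the same observation, $v_a=2-a\to\zeta=1=J_a(\infty)$.

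The gap is the step from that observation to $\varphi_a(v_a)\to(1+i\sqrt3)/2$, which Carath\'eodory kernel convergence does not supply.
\begin{enumerate}
\item Kernel convergence gives locally uniform convergence only on compact subsets of the limit domain, while $v_a$ runs into the boundary.
\item This is not a technicality: $v_a$ approaches $\zeta=1$ at exactly the rate at which the $2\pi/3$-corner there collapses. Indeed $\zeta+1=(a-1)/(Z-1)$, so in the rescaled coordinate $(\zeta+1)/(a-1)=1/(Z-1)$ the corner of $\Delta_Q$ at $\zeta=-1$ is just the $Z$-plane picture of $\Delta_Q$ near $Z=\infty$. Meanwhile $J_a(v_a)$, the point $Z=2$, sits at the fixed position $1$. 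Applying $J_a$, the same holds for $v_a$ and the corner of $J_a\Delta_a$ at $\zeta=1$.
\item In your limit strip the corner has disappeared: $\zeta=1$ is a flat boundary point and the order-$3$ cone point is lost. So knowing where the limit map sends $\zeta=1$ says nothing about $\varphi_a(v_a)$.
\item For non-real $a$ the targets $\varphi_a(\Delta_a)$ are varying based fundamental domains, so there is no fixed limit map at all.
\end{enumerate}
Your direction-independence paragraph relies on the same Euclidean closeness of $v_a$ to $\zeta=1$, and inherits the gap.

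What actually degenerates is the $J$-disc relative to everything else, so the argument should be run on the orbifold rather than on the maps $\varphi_a$. Take pairs in which $\widehat\C\setminus\Delta_{J_a}$ is a disc of diameter $O(|a-1|)$ tangent to $\partial\Delta_Q$ at $1$; the pairs used near $a=1$ can be arranged so via Proposition~\ref{def_fixed_a}. Consider the part of $\overline{\Delta_Q}$ with $C|a-1|<|Z-1|<c$, with its two boundary arcs glued by $Cov_0^Q$ (approximately the involution $Z\mapsto 2-Z$ near $Z=1$). This is an annulus in $\Omega_a/\langle\F_a\rangle$ of modulus $\gtrsim\log(1/|a-1|)$. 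It separates $\{[a],\mbox{cusp}\}$ from $\{[v_a]=[2],[\infty]\}$.

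Push it forward by $\Theta_a$ to the fixed modular sphere, on which $[i]$ and the puncture are fixed points. A large-modulus annulus there must have a complementary component of small diameter, and it cannot be the one containing $[i]$ and the puncture. This forces $\Theta_a([v_a])\to[(-1+i\sqrt3)/2]$ uniformly as $a\to1$.

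Now $\Psi$ (resp.\ $\Psi^{\mathcal U}$, $\Psi^{\mathcal L}$) is continuous on a connected punctured sector at $1$ containing $(1,1+\delta)$, and the $PSL(2,\Z)$-orbit of $(-1+i\sqrt3)/2$ is discrete. Hence $\Psi(a)$ is trapped near a single orbit point, which Proposition~\ref{reala} identifies as $(1+i\sqrt3)/2$. This is what the paper compresses into ``since this is true on the quotient orbifold, the lift converges'', and it gives (i) from every direction without any limiting map.
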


\begin{proof}
    (i) The proof is identical to the last part of the proof of Proposition \ref{reala}: we change from the coordinate $Z$ to the coordinate $\zeta=1/z$ defined there, and we see that when $a$ converges to $1$ from any direction, the critical value $v_a$ converges to $\zeta=+1$. Since this is true on the quotient orbifold, the lift $\varphi_a(v_a)$ converges to $(1+i\sqrt{3})/2\in\HH$.
    (ii) This is immediate from the continuity of $\Psi^\mathcal{U}$ and of $\Psi^\mathcal{L}$ and the fact that both are canonical maps.
    (iii) From (i), $g\in PSL(2,\Z)$ stabilizes $(1+i\sqrt{3})/2\in \HH$, so it is one of these three elements.
\end{proof}

Which of the three candidates listed in (iii) of Lemma \ref{cut_line} is the element $g$? It would suffice to know the value of $\Psi^\mathcal{L}$ and $\Psi^\mathcal{U}$ at a single $a\in \D(0,1)$. As we do not have this information we turn to the boundary point $-1$.

\begin{figure}
\includegraphics[width=8.5cm]{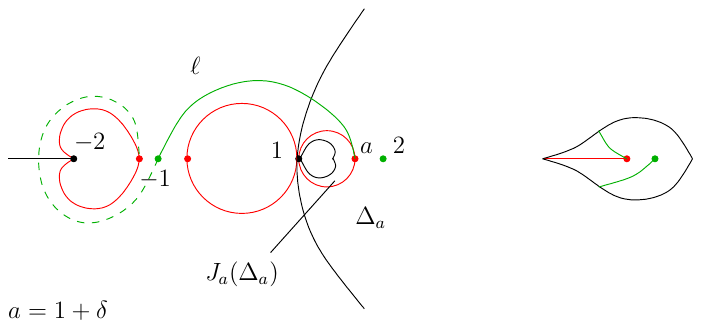}\includegraphics[width=3.5cm]{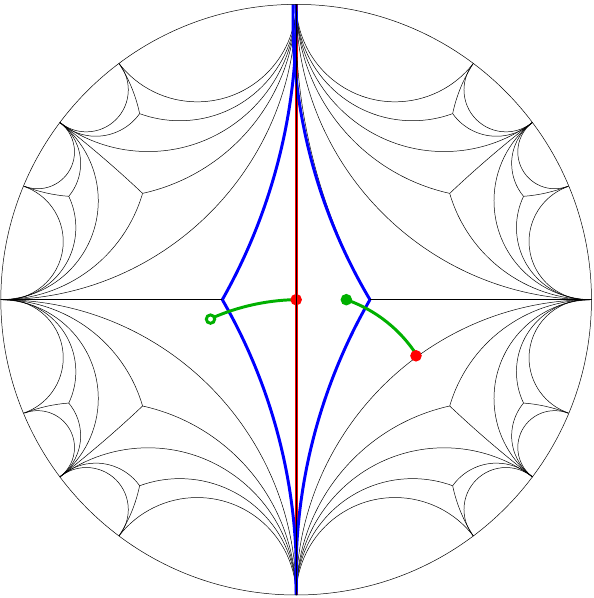} 

\includegraphics[width=8.5cm]{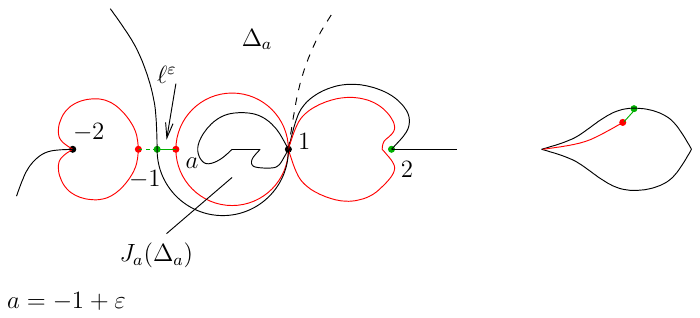}\includegraphics[width=3.5cm]{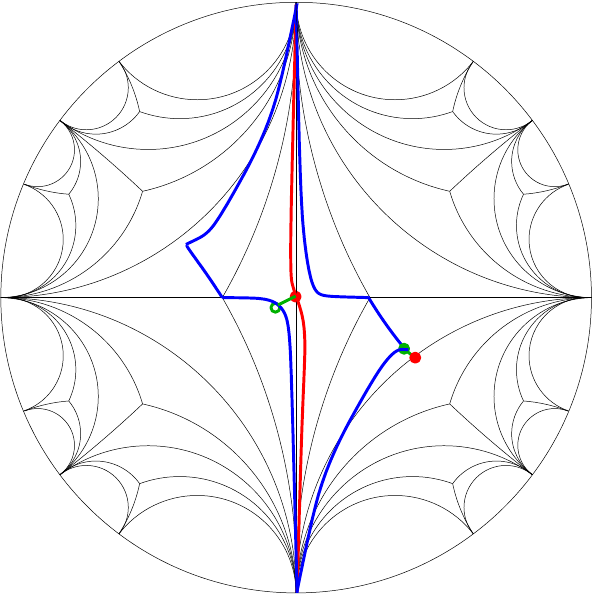}

\caption{Top row: for $a=1+\delta$ (some $0<\delta<1$) we show: the path $\ell:=\ell^\mathcal{U}$ on the $Z$-plane, from $Z=a$ to $Z=-1$; its projection $[\ell]$ to the orbifold $\Omega(\F_a)/\langle \F_a\rangle$; two lifts of $[\ell]$ to $\HH$ - the marked triangles are $\varphi_a(\Delta_a)$ on the left of the vertical axis, and $\varphi_a(J_a(\Delta_a))$ on the right. Bottom row: the same for $a=-1+\varepsilon$ (some $0<\varepsilon<1$) and a path $\ell^{\epsilon}$ from $a$ to $-1$. In each of the diagrams on the $Z$-plane, the three topological circles coloured red are the round circle $\partial\Delta_{J_a}$ and its two images under $Cov_0^Q$.}\label{newfig5}
\end{figure}

For $a\in \D(0,1)$ we shall denote by ($\Delta_Q^\mathcal{U},\Delta_{J_a})$ the Klein combination pair at the end of Step $3$ of the proof of Proposition \ref{D01} (the sets illustrated bottom right in Figure \ref{deform_pic}); we denote by $\varphi_a^\mathcal{U}$ the B\"ottcher map which sends $\Delta^\mathcal{U}_a:=\Delta_Q^\mathcal{U}\cap \Delta_{J_a}$ into $\HH$, and note that $\varphi_a^\mathcal{U}(-1)$ is well-defined.

We claim that when $a=-1+\varepsilon$ with $0<\varepsilon<1$ the image of $\Delta^\mathcal{U}_a$ under $\varphi_a$ is as illustrated in Figure \ref{newfig5}, at least as far as the vertices of the image are concerned (and the true edges are homotopic to those illustrated). To show why this is the case, we will define a map $\varphi_{-1}^{\mathcal{U}}$, determine its image, and argue using joint continuity with nearby $\varphi_a^\mathcal{U}$, $a\in \D(0,1)$. For $a=-1$, we no longer have a Klein combination pair, as the intersection between the complements of $\Delta_Q^\mathcal{U}$ and $\Delta_{J_{-1}}$ has expanded from the single point $1$ to become the lower half of the unit circle. The orbifold $\Omega(\F_{-1})/\langle \F_{-1}\rangle$ is isomorphic to the quotient of $\Delta_Q^\mathcal{U}$ by boundary pairings (induced by $Cov_0^Q$ and $J_{-1}\circ Cov_0^Q$) so it is still conformally a sphere with three marked points. Although the marked point corresponding to $Z=-1$ is a puncture in the boundary-identified $\Delta_Q^\mathcal{U}$, the orbifold universal cover with this marked point regarded as a $\pi$-cone point (and with $2\pi/3$-cone point corresponding to $Z=\infty$ and puncture point corresponding to $Z=1$) is $\HH$, as before, equipped with $PSL(2,\Z)$ as covering group. Thus we can apply the same recipe as we used earlier to define $\varphi_a^\mathcal{U}$, and define a conformal homeomorphism $\varphi_{-1}^\mathcal{U}$ of  $\Delta_Q^\mathcal{U}$ into $\HH$, namely send $-1$ to $i\in\HH$, and for each $Z\in \Delta_Q^\mathcal{U}$ send an arc from $-1$ to $Z$ to the corresponding lifted arc in $\HH$. The map $(a,Z) \to \varphi_a^\mathcal{U}(Z)$ is jointly continuous in $a$ and $Z$ at every $(a,Z)$ where $\varphi_a^\mathcal{U}(Z)$ is defined (since we have used the same recipe for both $\varphi_a^\mathcal{U}(Z)$ and $\varphi_{-1}^\mathcal{U}(Z)$). In particular this is true at $(a,Z)=(-1,-1)$. The image $\varphi_{-1}^\mathcal{U}(\Delta_Q^\mathcal{U})$ is a quadrilateral in $\HH$ with vertices $+\infty,i,(-1+i\sqrt{3})/2$ and $\rho(i)$. We deduce that for $a=-1+\varepsilon$ the configuration of $\varphi_a^\mathcal{U}(\Delta_a^\mathcal{U})$ is as illustrated, and that the image of $\ell^\varepsilon$ contracts to the single point $i\in \HH$ as $\varepsilon$ goes to zero. This reasoning applies to any $a\in \D(0,1)$ as $a$ goes to $-1$, not just for $a=-1+\varepsilon$ with $\varepsilon$ real.

The same reasoning applies when $\Delta_Q^\mathcal{U}$ is replaced by $\Delta_Q^\mathcal{L}$, its complex conjugate in the $Z$-plane. The following result is an immediate consequence of joint continuity of both $\varphi_a^\mathcal{U}(Z)$ and $\varphi_a^\mathcal{L}(Z)$ at $(a,Z)=(-1,-1)$.

\begin{lemma}\label{varphi-1}
For $a\in \D(0,1)$,
$$\lim_{a \to -1}\varphi_a^\mathcal{U}(-1)=i\in\HH \mbox{ and }\lim_{a \to -1}\varphi_a^\mathcal{L}(-1)=i\in\HH. \mbox{\qed}$$
\end{lemma}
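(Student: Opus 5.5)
The statement follows from joint continuity of the B\"ottcher recipe at the single point $(a,Z)=(-1,-1)$. I would run the argument for $\varphi^\mathcal{U}$ and then transfer it to $\varphi^\mathcal{L}$ by complex conjugation.

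\emph{Setting up a common recipe.} Fix the pair $(\Delta_Q^\mathcal{U},\Delta_{J_a})$ from the end of Step 3 of Proposition \ref{D01}. Here $\Delta_Q^\mathcal{U}$ is independent of $a$, and $\partial\Delta_{J_a}$ is the round circle through $1$ and $a$ centred on the real axis. As $a\to-1$ inside $\D(0,1)$, this circle converges to the unit circle. For each $a$, including $a=-1$, the orbifold $\Omega(\F_a)/\langle\F_a\rangle$ is obtained from a domain by boundary pairings given by $Cov_0^Q$ and $J_a\circ Cov_0^Q$. It is a sphere with marked points of types $(\infty\text{-puncture},\ \pi\text{-cone},\ 2\pi/3\text{-cone})$, corresponding to $Z=1$, $Z=a$ (respectively $Z=-1$ at $a=-1$), and $Z=\infty$. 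Its orbifold universal cover is $\HH$ with deck group $PSL(2,\Z)$.

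\emph{Defining $\varphi_{-1}^\mathcal{U}$ and matching base points.} I would normalise $\varphi_a^\mathcal{U}$ by sending the $\pi$-cone point $a$ to $i$ and lifting paths, with the same choice of lift direction as in Section \ref{canon}. At $a=-1$ I define $\varphi_{-1}^\mathcal{U}$ by sending $-1\mapsto i$ and lifting arcs from $-1$. Because the $\pi$-cone point $a$ tends to $-1$, the base points match in the limit. One then identifies the image $\varphi_{-1}^\mathcal{U}(\Delta_Q^\mathcal{U})$ as the quadrilateral with vertices $\infty$, $i$, $(-1+i\sqrt3)/2$ and $\rho(i)$.

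\emph{Joint continuity.} The key step is to show that $(a,Z)\mapsto\varphi_a^\mathcal{U}(Z)$ is jointly continuous at $(-1,-1)$. I would argue as follows.
\begin{enumerate}
\item The marked orbifold structures vary continuously in $a$: the side-pairing domains move continuously, and the marked points move continuously.
\item Consequently the uniformizing maps to $\HH/PSL(2,\Z)$ converge locally uniformly. This uses conformal rigidity of the thrice-marked sphere together with a normal-families argument, for instance via quasiconformal maps between the domains that converge to the identity.
\item The lifts, normalised at the $\pi$-cone point, then converge as well.
\end{enumerate}
This is the main obstacle. The domain $\Delta_a^\mathcal{U}$ degenerates as $a\to -1$: its boundary pinches along the lower unit semicircle, and the cone point $a$ collides with the boundary point $-1$. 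Continuity therefore cannot be read off naively. I would handle it by working on the quotient orbifolds, where the degeneration disappears, since the limiting orbifold is still a thrice-marked sphere. One also has to check that the lifts near the cone point $i$ select consistent sheets; the fixed lift-direction convention handles this.

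\emph{Conclusion.} Given joint continuity, $\varphi_a^\mathcal{U}(-1)\to\varphi_{-1}^\mathcal{U}(-1)=i$. Applying complex conjugation in the $Z$-plane to replace $\Delta_Q^\mathcal{U}$ by $\Delta_Q^\mathcal{L}$ gives the same argument verbatim, hence $\varphi_a^\mathcal{L}(-1)\to i$.
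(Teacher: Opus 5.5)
Your proposal is correct and is essentially the paper's argument. The paper likewise defines $\varphi_{-1}^{\mathcal U}$ by sending $-1\mapsto i$ and lifting arcs, with $Z=-1$ (a puncture for $\mathcal{F}_{-1}$) \emph{regarded} as a $\pi$-cone point; it then asserts joint continuity of $(a,Z)\mapsto\varphi_a^{\mathcal U}(Z)$ at $(-1,-1)$, justified only by the remark that the same recipe defines both $\varphi_a^{\mathcal U}$ and $\varphi_{-1}^{\mathcal U}$, and handles $\mathcal L$ by complex conjugation, so your discussion of the degeneration and of convergence of the marked quotient spheres elaborates that step rather than taking a different route.
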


It is natural to conjecture that this statement holds more generally for $a$ approaching $-1$ from any direction within $\K$, but we do not yet know enough about the boundary of $K$ in a neighbourhood of $-1$ to prove this.

\begin{prop}\label{Psi-1} 
As $a\in\K$ approaches $-1$ along any path in $\D(0,1)$,
$$\lim_{a \to -1}\Psi^\mathcal{U}(a) = \sigma\rho(i)=(1+i)/2\in \HH\mbox{ and } \lim_{a \to -1}\Psi^\mathcal{L}(a) = \sigma\rho^{-1}(i)=1+i \in \HH.$$
\end{prop}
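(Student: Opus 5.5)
We relate $\Psi^{\mathcal U}(a)=\varphi^{\mathcal U}_a(v_a)$ to $\varphi^{\mathcal U}_a(-1)$ by following the forward orbit of the critical point. The basic observation is that $v_a=J_a(2)$ is an $\F_a$-image of the critical point $c_a=-1$. Indeed $Cov_0^Q(-1)=\{-1,2\}$, so $\F_a(-1)=\{J_a(-1),J_a(2)\}$. By equivariance of $\varphi_a^{\mathcal U}$, where $\F_a$ acts on $\HH$ by the pair $(\sigma\rho,\sigma\rho^{-1})$, we get
$$\varphi^{\mathcal U}_a(v_a)=h\bigl(\varphi^{\mathcal U}_a(-1)\bigr)$$
for one of the two branches $h\in\{\sigma\rho,\sigma\rho^{-1}\}$. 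The branch is determined by which side of $-1$ the relevant tile lies on. Once $h$ is fixed, Lemma \ref{varphi-1} gives $\lim_{a\to-1}\Psi^{\mathcal U}(a)=h(i)$, and the limit is independent of the path in $\D(0,1)$ because the lemma applies to any approach within $\D(0,1)$.

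\textbf{Step 1: equivariance across the puncture.} I first need $\Psi^{\mathcal U}(a)=\varphi_a^{\mathcal U}(v_a)$ for $a\in\D(0,1)$, with the map built from the pair $(\Delta^{\mathcal U}_Q,\Delta_{J_a})$ at the end of Step 3 of Proposition \ref{D01}. This follows from Corollary \ref{continuous_deformation}, since that is how the extension $\Psi^{\mathcal U}$ was defined, provided $v_a$ stays in an allowed tile. At the end of Step 3, $-1$ lies on $\partial\Delta_Q^{\mathcal U}$. To handle this I use the modification of Remark \ref{deform_rem}, which places $2$ in the interior of $\Delta_Q$ and $-1$ outside its closure. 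Then $v_a=J_a(2)\in J_a(\Delta'_a)$ and $-1=c_a\in\F_a^{-1}(J_a\Delta_a')$, consistent with Lemma \ref{fund_lemma}. Concretely, $\varphi_a(v_a)=\sigma\varphi_a(2)$ and $\varphi_a(2)=\rho^{\pm1}\varphi_a(-1)$, since $2$ and $-1$ are paired by $Cov_0^Q$, which lifts to $\rho^{\pm1}$ about the cone point of $\infty$. Hence $\Psi^{\mathcal U}(a)=\sigma\rho^{\pm1}\varphi^{\mathcal U}_a(-1)$.

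\textbf{Step 2: determining the sign.} This is the main obstacle. I would read it off from the limiting quadrilateral $\varphi^{\mathcal U}_{-1}(\Delta^{\mathcal U}_Q)$, which has vertices $\infty$, $i$, $(-1+i\sqrt3)/2$ and $\rho(i)$. In this quadrilateral the vertex $i$ (the image of $-1$) and the vertex $\rho(i)$ (the image of $2$) are paired by the $Cov_0^Q$ side-pairing, which is rotation about $(-1+i\sqrt3)/2$. So in the $\mathcal U$ configuration $\varphi(2)=\rho\,\varphi(-1)$, and by joint continuity this persists for $a$ near $-1$. This gives the limit $\sigma\rho(i)=\sigma(-1/(i+1))=(1+i)/2$.

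\textbf{Step 3: the $\mathcal L$ case.} For $\Delta_Q^{\mathcal L}$, the complex conjugate configuration reverses orientation of the rotation, giving $\rho^{-1}$ and the limit $\sigma\rho^{-1}(i)=1+i$. As a consistency check, the two limits differ by $\sigma\rho\sigma$ or its inverse, which fixes $(1+i\sqrt3)/2$. This matches Lemma \ref{cut_line}(iii) and shows that $g$ is nontrivial.

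The delicate point is justifying that the lift of the $Cov_0^Q$ pairing taking $\varphi(-1)$ to $\varphi(2)$ is $\rho$ and not $\rho^{-1}$. For this I would trace the path $\ell^{\varepsilon}$ of Figure \ref{newfig5} together with the side-pairing from the edge through $-1$ to the edge through $2$, checking the convention that a path from $a$ to $\infty$ lifts to a path from $i$ to $(-1+i\sqrt3)/2$.
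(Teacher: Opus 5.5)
Your route is the paper's: its proof also combines Lemma \ref{varphi-1} with the fact that $v_a$ is an $\F_a$-image of $c_a=-1$, and it reads the branch off Figure \ref{newfig5}. Your factorisation $\varphi_a(v_a)=\sigma\varphi_a(2)=\sigma\rho^{\pm1}\varphi_a(-1)$ organises this well. The genuine gap is in the step that decides the answer.

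First, your final computation is wrong. We have $\sigma\rho(z)=z+1$ and $\sigma\rho^{-1}(z)=z/(z+1)$, so $\sigma\rho(i)=1+i$ and $\sigma\rho^{-1}(i)=(1+i)/2$. Indeed $-1/(i+1)=(-1+i)/2$, and $\sigma$ of this is $1+i$. So the displayed statement is internally inconsistent, and only the sign determination can tell you which half of each equality survives.

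Second, in Step 2 you take the fourth vertex $\rho(i)$ of the limiting quadrilateral from the text. That vertex is incompatible with the cusp vertex being $+\infty$. In the limiting tile $\Delta^{\mathcal U}_{-1}$ (vertices $-1,1,2,\infty$), the side $L$ is paired with $\bar L'$ by $Cov_0^Q\circ J_{-1}$, which fixes the cusp $Z=1$ and sends $-1\mapsto 2$. So this pairing lifts to a translation $z\mapsto z+n$ carrying $i$ to $\varphi(2)\in\{\rho(i),\rho^{-1}(i)\}$, and only $\rho^{-1}(i)=i-1$ is of that form.

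The orientation check you postponed confirms this. Take $a=1+\delta$.
\begin{itemize}
\item Facing into $\Delta^{st}_a$ from $Z=a$ (eastwards), the upper half of $\partial\Delta_{J_a}$ is on your left.
\item Facing into $\varphi_a(\Delta^{st}_a)$ from $i$ (westwards, along the arc towards $(-1+i\sqrt3)/2$), the side $(0,i]$ is on your left.
\item Hence the upper cusp corner at $Z=1$ goes to $0$ and the lower one to $\infty$. The $Cov_0^Q$-pairing of the lower branch of $\partial\Delta_Q$ onto the upper one therefore lifts to $\rho$, since $\rho(\infty)=0$.
\item Step 1 of Proposition \ref{D01} deforms the upper branch to $L\cup N'\ni -1$ and the lower one to $L'\cup N\ni 2$, and the pairing sends $2\mapsto -1$.
\end{itemize}
So $\varphi_a(-1)=\rho\,\varphi_a(2)$ all along the deformation, and
$$\Psi^{\mathcal U}(a)=\sigma\rho^{-1}\varphi^{\mathcal U}_a(-1)\to\sigma\rho^{-1}(i)=(1+i)/2,\qquad \Psi^{\mathcal L}(a)\to\sigma\rho(i)=1+i.$$

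The numerical limits in the statement are therefore right, but the group labels are interchanged; correspondingly the $g$ of Lemma \ref{cut_line} is $\sigma\rho^{-1}\sigma$. The paper's figure-based proof has the same $\rho\leftrightarrow\rho^{-1}$ slip. Your argument reaches the stated formula only because your sign error and your arithmetic error cancel.
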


\begin{proof}

The diagrams in Figure \ref{newfig5} illustrate $\varphi_a(\Delta_a)\subset \HH$ when $a$ is at the start of the path, and when $a$ is nearing $-1$. Recall that the partial B\"ottcher map $\varphi_a$ was first defined as a conformal map from $\Delta_a$ into $\HH$ with the vertices of $\Delta_a$ mapped to the standard vertices of a fundamental domain for $PSL(2,\Z)$ on $\HH$, in particular $\varphi_a(a)$ was defined to be $i\in\HH$. The map $\varphi_a$ was then extended as far as the critical point $c_a=-1$, but only `from one side'. A choice has to be made when defining this extension, namely which side of $\Delta_a$ to extend across near to the fixed point of $\rho$: we have made our choice by following an upper path around $Z=1$, and so $\varphi_{1+\delta}(-1)$ is the point in $\HH$ indicated by the green circle in the top right of the figure. By the time $a$ has reached the position in the lower diagram, close to $-1$, the images $\varphi_a(\Delta_a)$ and $\varphi_a(J_a(\Delta_a))=\sigma(\varphi_a(\Delta_a))$ have moved to (approximately) the positions indicated in the lower figure. In the limit as $a$ goes to $-1$, the point $\varphi_a(-1)$ tends to $i\in \HH$ (by Lemma \ref{varphi-1}). Since the critical value $v_a$ is the point $\F_a(-1)$, it follows at once that $\varphi_a(v_a)$ tends to $\sigma\rho(i)\in \HH$. Following the complex conjugate path $\ell^\mathcal{L}$ from $a=1+\delta$ to $a=-1$ clockwise around the puncture point $1$, we find that $\varphi(v_a)$ tends to $\sigma\rho^{-1}(i)$.
\end{proof}

One consequence is an answer to the question raised by Lemma \ref{cut_line}(iii).

\begin{cor}
For every $a\in \D(0,1)$, $\Psi^\mathcal{L}(a) =  \sigma\rho\sigma (\Psi^\mathcal{U}(a)$).
\end{cor}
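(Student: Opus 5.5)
By Lemma \ref{cut_line}(ii) there is a single $g\in PSL(2,\Z)$ with $\Psi^\mathcal{L}(a)=g\Psi^\mathcal{U}(a)$ for all $a\in\D(0,1)$. By Lemma \ref{cut_line}(iii), $g$ is one of $\mathrm{Id}$, $\sigma\rho\sigma$, $\sigma\rho^{-1}\sigma$. So we only need to identify which of these three elements is $g$. We do this by passing to the limit $a\to-1$ along a path in $\D(0,1)$ and using Proposition \ref{Psi-1}.

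Fix any path in $\D(0,1)$ tending to $-1$. Since $g$ is a fixed isometry of $\HH$, the relation $\Psi^\mathcal{L}(a)=g\Psi^\mathcal{U}(a)$ passes to the limit. Proposition \ref{Psi-1} then gives
$$\sigma\rho^{-1}(i)=g\,\sigma\rho(i).$$

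Next we test the three candidates against this identity. First, $g=\mathrm{Id}$ is excluded, because $\sigma\rho(i)=(1+i)/2\ne 1+i=\sigma\rho^{-1}(i)$. Second, for $g=\sigma\rho\sigma$ we use $\sigma^2=\mathrm{Id}$ to get
$$g\sigma\rho(i)=\sigma\rho\sigma\sigma\rho(i)=\sigma\rho^2(i)=\sigma\rho^{-1}(i),$$
since $\rho^3=\mathrm{Id}$. So $g=\sigma\rho\sigma$ satisfies the identity. Third, for $g=\sigma\rho^{-1}\sigma$ we get $g\sigma\rho(i)=\sigma\rho^{-1}\rho(i)=\sigma(i)=i$, which differs from $1+i$, so this candidate is also excluded. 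As a sanity check, one can verify all three values with the explicit formulas $\rho(z)=-1/(z+1)$ and $\sigma(z)=-1/z$.

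The main point needing care is that the limits in Proposition \ref{Psi-1} are taken within $\D(0,1)$ along a common path. This is exactly the setting of Lemma \ref{cut_line}(ii), which is a global identity on $\D(0,1)$, so the limit argument is legitimate. Since $g$ is constant, evaluating it at the single limiting configuration determines it, and hence $\Psi^\mathcal{L}(a)=\sigma\rho\sigma(\Psi^\mathcal{U}(a))$ for every $a\in\D(0,1)$.
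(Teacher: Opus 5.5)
Your proposal is correct and is essentially the paper's argument. The paper likewise passes to the limit $a\to-1$ using Proposition \ref{Psi-1} and reads off $g=\sigma\rho\sigma$ from Lemma \ref{cut_line}; you just make explicit the elimination of the other two candidates. One harmless slip, inherited from the statement of Proposition \ref{Psi-1}: with $\rho(z)=-1/(z+1)$ and $\sigma(z)=-1/z$ one actually gets $\sigma\rho(i)=1+i$ and $\sigma\rho^{-1}(i)=(1+i)/2$, but your argument only uses that $i$, $1+i$ and $(1+i)/2$ are distinct, so nothing changes.
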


\begin{proof}
Proposition  \ref{Psi-1} allows us to extend the definitions of $\Psi^\mathcal{U}$ and $\Psi^\mathcal{L}$ continuously from the open unit disc $\D(0,1)$ to the boundary point $a=-1$, and tells us that $\Psi^\mathcal{L}(-1) =  \sigma\rho\sigma (\Psi^\mathcal{U}(-1)$).
The result follows.
\end{proof}

We next examine the dynamics of $\F_{-1}$. Although $-1\notin \K$, the correspondence $\F_{-1}$ is still {\it discrete} in the sense that there are points $Z\in\widehat{\C}$ which have a neighbourhood $V$ such that no image of $V$ under forward, backward or mixed iteration of $\F_a$ (other than identity on $V$) meets $V$. Indeed we have a dynamical partition into a regular set and a limit set just as for $a\in\K$.

\begin{figure}
\centering
\includegraphics[width=8cm]{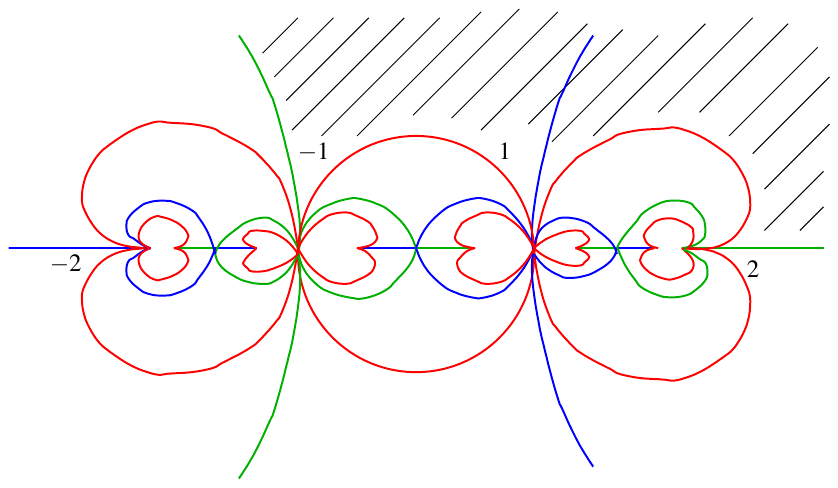}
\caption{Dynamics of $\F_{-1}$: copies of the hatched set $\Delta^\mathcal{U}_{-1}$ tile $\Omega(\F_{-1})$.}\label{-1plot}
\end{figure}

\begin{prop}\label{F_-1}
The Riemann sphere $\widehat{\C}$ is partitioned into a non-empty open set $\Omega(\F_{-1})$ and its (non-empty closed) complement $\Lambda(\F_{-1})$, each invariant under both $\F_{-1}$ and its inverse. There exist (open) fundamental domains $\Delta_{J_{-1}}$ and $\Delta_Q$, for $J_{-1}$ and $Cov^Q$ respectively, such that $\Omega(\F_{-1})$ is tiled by the images of $\Delta_Q\cap\Delta_{J_{-1}}$ under (mixed forward and backward) iteration of $\F_{-1}$.
\end{prop}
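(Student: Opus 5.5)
Our plan is to use the domains already constructed in the discussion preceding Lemma \ref{varphi-1}. We take $\Delta_Q:=\Delta_Q^\mathcal{U}$, the fundamental domain for $Cov^Q$ reached at the end of Step 3 of the proof of Proposition \ref{D01}, whose boundary contains the upper unit semicircle $L$. We take $\Delta_{J_{-1}}$ to be a fundamental domain for $J_{-1}$; at $a=-1$ the fixed points of $J_{-1}$ are $Z=\pm1$, and $J_{-1}(Z)=1/Z$. The natural choice is the exterior of the unit circle, with the lower semicircle $\bar L$ as the degenerate common boundary piece. We will not ask for a Klein combination pair, which fails at $-1$ by the Corollary that $-1\in\partial\K$. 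Instead the complements of $\Delta_Q$ and $\Delta_{J_{-1}}$ meet along $\bar L$. So $\Delta:=\Delta_Q\cap\Delta_{J_{-1}}$ is the hatched set of Figure \ref{-1plot}, with boundary sides paired by $Cov_0^Q$ and by $J_{-1}$.

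\textbf{Step 1.} We check the ping-pong configuration. The image $J_{-1}(\widehat\C\setminus\Delta_{J_{-1}})$ should lie in $\overline{\Delta_{J_{-1}}}$. Likewise $Cov_0^Q$ should map $\widehat\C\setminus\Delta_Q$ into $\overline{\Delta_Q}$, and carry each of the two other $Q$-preimage sheets to the others. These are explicit computations for $Q(Z)=Z^3-3Z$ and the unit circle: $Q$ maps the unit circle onto the segment $[-2,2]$, and $L,\bar L$ together with their $Cov_0^Q$-images give the boundaries.

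\textbf{Step 2.} We define $\Lambda_+:=\bigcap_{n\ge0}\F_{-1}^n(\overline{\Delta_Q})$, $\Lambda_-:=J_{-1}(\Lambda_+)$, $\Lambda:=\Lambda_+\cup\Lambda_-$ and $\Omega:=\widehat\C\setminus\Lambda$. Following the proof of Lemma \ref{fund_lemma}, we show that
$$\Omega=\bigcup_{n\in\Z}\F_{-1}^n\bigl(\Delta'\cup J_{-1}\Delta'\bigr),$$
where $\Delta'$ is $\overline\Delta$ minus the parabolic point and minus the pinched part. The argument is the Klein combination argument: any point whose orbit does not eventually leave $\overline{\Delta_Q}$ (or $\overline{\Delta_{J}}$) lies in $\Lambda_\pm$. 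Otherwise the first exit time places it in a tile. Invariance of $\Omega$ and $\Lambda$ under $\F_{-1}$ and $\F_{-1}^{-1}$ then follows from the definitions. $\Lambda$ is closed as an intersection of closed sets, and it is nonempty since it contains $Z=1$. $\Omega$ is nonempty since it contains the interior of $\Delta$.

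\textbf{Step 3, the main obstacle.} We must show the tiles have disjoint interiors and that $\Omega$ is open. The difficulty is the pinching along $\bar L$, where the two halves no longer meet only at the parabolic point $1$. We expect $\bar L$ to be mapped into $\Lambda$, or to be glued to the boundary via the critical point $-1$, which lies on $\partial\Delta_Q$. We would handle this in two ways. First, $\bar L$ is an $\F_{-1}$-invariant arc, with $J_{-1}(\bar L)=L$ reflected and $Cov_0^Q$ permuting arcs of the unit circle, so we would verify directly that it lies in $\Lambda$. Second, the orbifold description given before Lemma \ref{varphi-1} says that $\Delta_Q^\mathcal{U}$ modulo side pairings is a sphere with three marked points. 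Lifting the identification to $\HH$ via $\varphi_{-1}^\mathcal{U}$ shows that the tiles near $-1$ correspond to the $PSL(2,\Z)$ translates around the $\pi$-cone point $i$. These translates are locally finite and disjoint, and that gives openness and disjointness.
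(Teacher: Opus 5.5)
Your domains are the paper's, and Steps 1--2 expand the paper's short proof, which simply asserts that the images of the hatched set tile an open set. The gap is Step 3, which rests on a misreading of the pinched configuration.

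The open unit disc $\D$ lies in $\Delta_Q^{\mathcal U}$, while $\bar L\subset\partial\Delta_Q^{\mathcal U}$. So the region just outside $\bar L$ is not in $\Delta_Q^{\mathcal U}$, and $\bar L$ is not an edge of the hatched tile $\Delta$ at all: it is an edge of $J_{-1}\Delta$. The hatched tile is a quadrilateral with vertices $-1,1,2,\infty$ and sides $L$, $\bar L'=Cov_0^Q(\bar L)$ (from $1$ to $2$), $N$ and $N'$. Its sides are paired by $Cov_0^Q$ ($N\leftrightarrow N'$) and by $J_{-1}\circ Cov_0^Q$ ($\bar L'\leftrightarrow L$), not by $J_{-1}$; this is the description used just before Lemma \ref{varphi-1}. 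The vertices $\pm1$ and $2$ are cusps. So in Step 2 there is no ``pinched part'' of $\overline\Delta$ to delete; the cusp vertices are what must be removed. Moreover $Z=-1$ is a parabolic fixed point of $\F_{-1}$, since the branch $J_{-1}\circ Cov_0^Q$ there has derivative $(-1)(-1)=1$.

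Both of your remedies therefore fail.

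(a) $\bar L$ is neither $\F_{-1}$-invariant nor contained in $\Lambda$. $Cov_0^Q$ does not permute arcs of the unit circle: $|Z|=|W|=1$ together with $W^2+WZ+Z^2=3$ forces $W=Z=\pm1$. For example, $Cov_0^Q(-i)=\{(i\pm\sqrt{15})/2\}$, both of modulus $2$. Likewise $Q$ does not map the unit circle onto $[-2,2]$: $Q(i)=-4i$, and it is $[-2,2]$ that $Q$ preserves. With your own definition of $\Lambda_\pm$, $\bar L\cap\F_{-1}(\overline{\Delta_Q})\subset\{\pm1\}$ because the open arc $L$ lies in $\Delta_Q$. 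Also $L\cap\F_{-1}^2(\overline{\Delta_Q})\subset\{\pm1\}$ because $Cov_0^Q(\bar L)$ meets $\overline{\D}$ only at $\pm1$. Hence $\bar L\cap\Lambda\subset\{\pm1\}$, consistent with the paper's remark that $\Lambda(\F_{-1})$ is a Cantor set in $[-2,-1/2]\cup[1/2,2]$.

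(b) The lift via $\varphi^{\mathcal U}_{-1}$ is circular. Transporting disjointness and local finiteness from $\HH$ to $\widehat\C$ requires extending $\varphi^{\mathcal U}_{-1}$ equivariantly over neighbouring tiles, which presupposes the tiling. It also gives the wrong local picture. Treating $[-1]$ as a $\pi$-cone point is only a device for the continuity argument of Lemma \ref{varphi-1}. In the dynamical plane $Z=-1$ is a parabolic cusp lying in $\Lambda$, with infinitely many tiles accumulating there.

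What does close Step 3 is that the ping-pong inclusions survive the pinching. We have $\D\subset\Delta_Q$ and $\widehat\C\setminus\Delta_Q\subset\{|Z|\ge1\}$, so $\F_{-1}(\overline{\Delta_Q})=J_{-1}(\widehat\C\setminus\Delta_Q)\subset\overline{\D}$, and open tiles are disjoint just as in the Klein combination case. Openness of $\Omega:=\widehat\C\setminus\Lambda$ is then automatic. The remaining content is that closed tiles meet $\Lambda$ only at cusp vertices, which is checked by computations like those in (a).

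A smaller slip in Step 1: the inclusion you want is $Cov_0^Q(\overline{\Delta_Q})=\widehat\C\setminus\Delta_Q$. The inclusion you wrote, $Cov_0^Q(\widehat\C\setminus\Delta_Q)\subset\overline{\Delta_Q}$, is false, since one image of a point in another sheet lies in the third sheet.
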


\begin{proof}
As before we take $\Delta_Q^\mathcal{U}$ to be the transversal of $Q$ at the end of Step 2 of the proof of Proposition \ref{D01}, and $\Delta_{J_{-1}}$ to be the complement of the unit disc in the $Z$-plane. The images of $\Delta_{-1}^\mathcal{U}:=\Delta_Q^\mathcal{U}\cap\Delta_{J_{-1}}$ (the hatched set in Figure \ref{-1plot}), under all finite words in $Cov_Q$ and $J_{-1}$, tile an open set, and by including appropriate boundaries of tiles we obtain an open completely invariant set $\Omega(\F_{-1})$. 
\end{proof}

Figure \ref{-1plot} illustrates the images of $M=[-\infty,-2]$, $N=[2,\infty]$ and $\partial \Delta_{J_{-1}}= L\cup \bar{L}$ (in the notation of Figure \ref{deform_pic}), under words in $Cov^Q$ and $J_{-1}$. Whether we choose $\Delta_{-1}^\mathcal{U}$ or its complex conjugate $\Delta_{-1}^\mathcal{L}$ as our initial tile, we obtain the same tiling. Indeed $\Delta^\mathcal{L}_{-1}$ is one of the two images of $\Delta^\mathcal{U}_{-1}$ under $Cov_0^Q$. The complement $\Lambda(\F_{-1})$ of the union of the closed tiles (minus their cusp vertices) is a Cantor set contained in the interval $[-2,+2]$ on the real axis, indeed contained in $[-2,-1/2]\cup [+1/2,+2]$; this Cantor set has a single family of gaps, the images of the interval $(-1/2,+1/2)$ (or equivalently the interval from $+2$, through $\infty$, to $-2$) under (mixed) iteration of $Cov_0^Q$ and $J_{-1}$.

\begin{remark}
 Topologically $\F_{-1}$ can be obtained from $\F_a$ $(a=1+\delta)$, by pinching the path $\ell^\mathcal{U}$ on $\Delta_a$ to a point, and each of the images of this path under backwards, forwards, and mixed iteration of $\F_a$ to a point. Note that these images are assembled into `long' paths in $\Omega(\F_a)$ since lifts of $[\ell]$ to $\Omega(\F_a)$ are joined at the critical point $-1$, points $\F_a^{-n}(-1)$ and also at $a$ and points $\F_a^{-n}(a)$. In contrast the lifts of $[\ell]$ to $\HH$ are `short' (they each lie in a finite union of tiles).
 \end{remark}

The dynamical picture for any $a =-1+\delta$, ($\delta$ real $>0$) is broadly similar to that for $\F_{-1}$. Once again the limit set is a Cantor set, now contained in a smaller real interval than $[-2,2]$, and now with further gaps, generated by a gap which opens up around the point $-1$ (the parabolic point splits into an attracting fixed point and a repelling fixed point, both real). Although the tiling generated by the shaded tile in Figure \ref{deform_pic}(iv) is no longer preserved by complex conjugation, we can make it so, by an appropriate adjustment to the boundary of $\Delta_Q$. We note that as $a$ passes through $-1$ in the opposite direction, towards the left (so moving out of $\K$), the parabolic fixed point splits into a complex conjugate pair of neutral fixed points. In Appendix II  (Section \ref{triangles}) we discuss the dynamics of $\F_a$ for $a\in (-\infty,-1)$, and the existence of isolated parameter values in this interval where $\F_a$ is discrete.

\subsection{Based fundamental domains for $PSL(2,\Z)$}\label{isotopy_description}

In this subsection, we shall establish some constraints on the set of points of $\HH$ which could lie in the image of a canonical map $\psi:\K\setminus\m \to \HH$. 

\begin{defi}\label{based_defi}
We say a fundamental domain for $PSL(2,\Z)$ on $\HH$ is {\it based} if it has boundary a Jordan curve which passes through the points $0,i,\infty,(-1+i\sqrt{3})/2$ (the {\it vertices}) and is smooth between vertices. Two fundamental domains are {\it based isotopic} if there is an isotopy between them through based fundamental domains. 

We define the set of {\it accessible points} $\mathcal{W}_{acc}\subset \HH$ to be the points $x+iy\in \HH$ which have $x\le 0$ or else lie in $\Delta\cup\sigma(\Delta)$ for some based fundamental domain $\Delta$ for $PSL(2,\Z)$. 
\end{defi}

For each $a\in\K$ and Klein combination pair $(\Delta_Q,\Delta_{J_a})$, our B\"ottcher map $\varphi_a$ sends $\Delta_a$ conformally to a based fundamental domain for $PSL(2,\Z)$ on $\HH$. It follows from the way we defined a canonical map $\psi$ that points $z\in \HH\setminus\mathcal{W}_{acc}$ cannot be in the image of $\psi$.  

\begin{figure}
\begin{center}

\scalebox{0.3}{\includegraphics{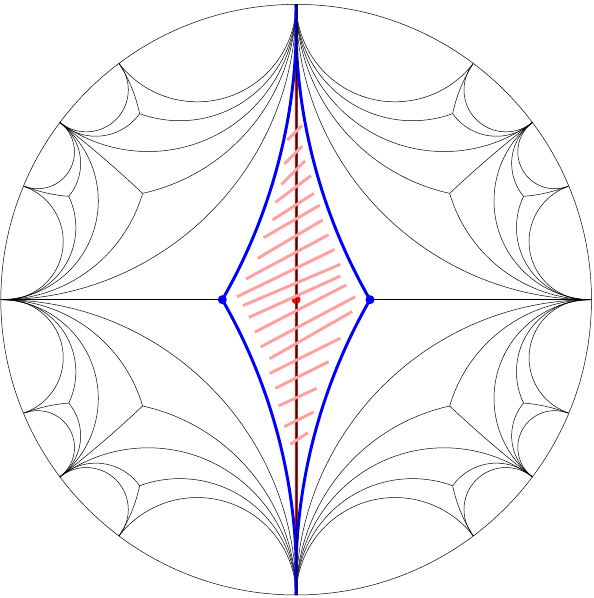}\hskip2.0cm\includegraphics{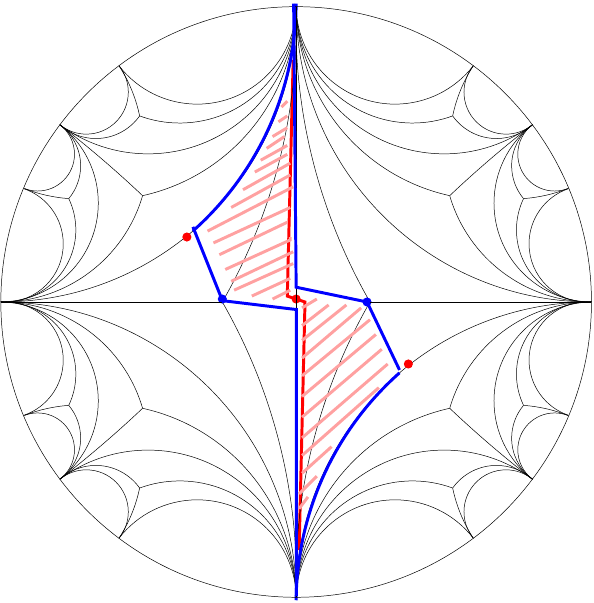}\hskip2.0cm\includegraphics{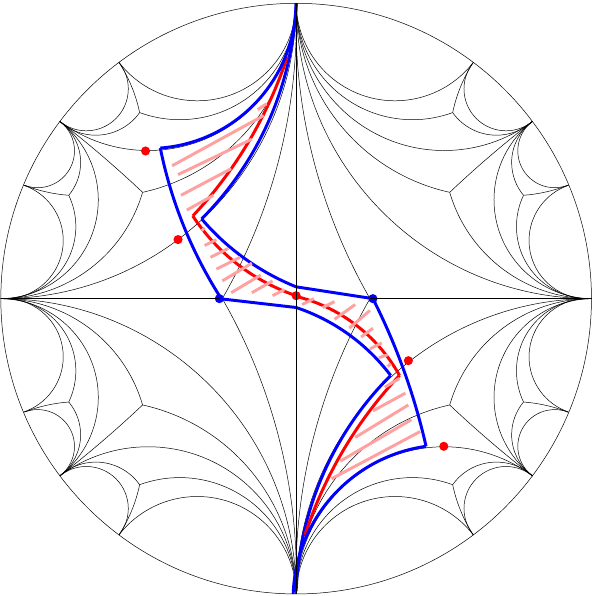}}

\vskip0.3cm

(i)\scalebox{0.25}{\includegraphics{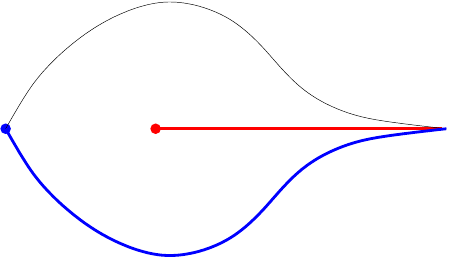}}\hskip1.2cm(ii)\scalebox{0.25}{\includegraphics{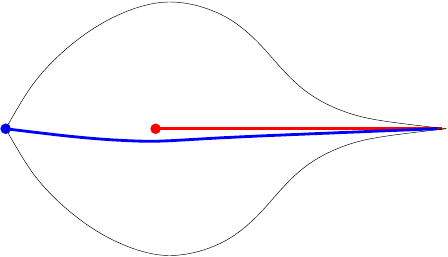}}\hskip1.2cm(iii)\scalebox{0.25}{\includegraphics{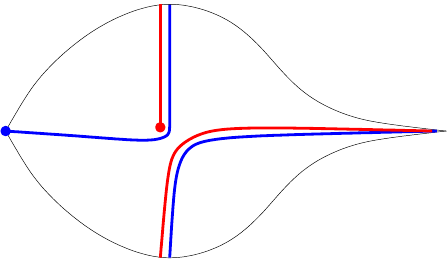}}

\vskip0.6cm

\vskip0.5cm

\scalebox{0.3}{\includegraphics{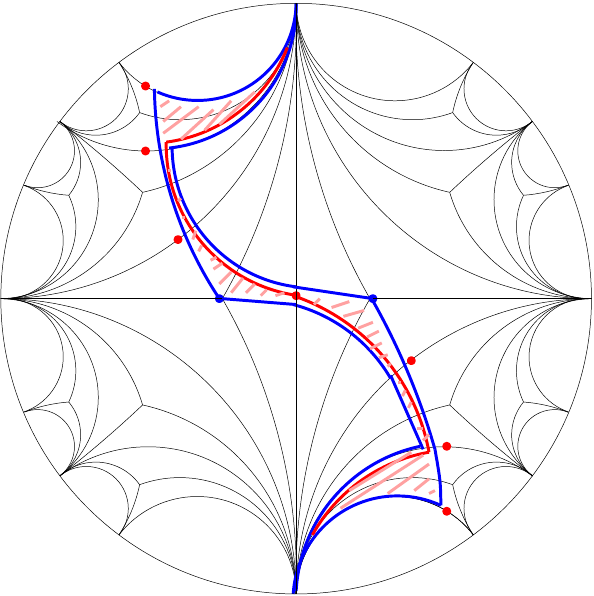}\hskip2.0cm\includegraphics{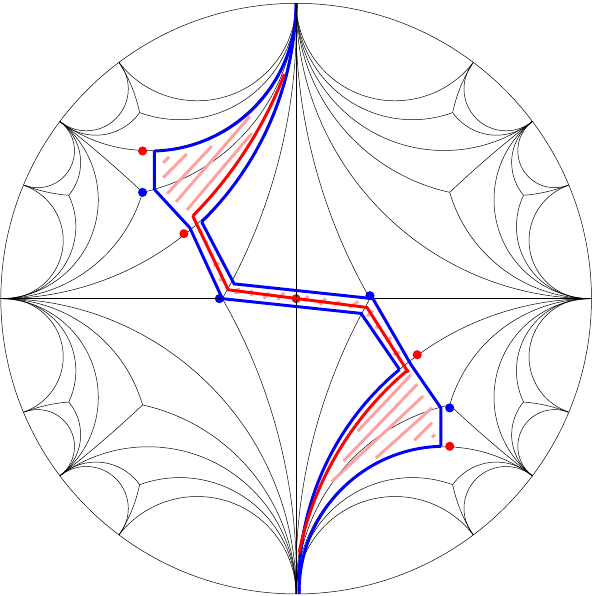}\hskip2.0cm\includegraphics{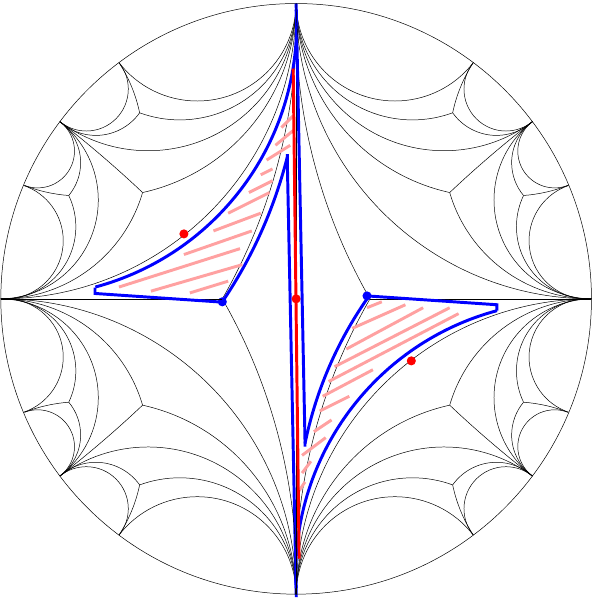}}

\vskip0.3cm

(iv)\scalebox{0.25}{\includegraphics{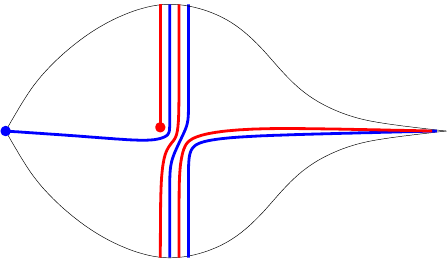}}\hskip1.2cm(v)\scalebox{0.25}{\includegraphics{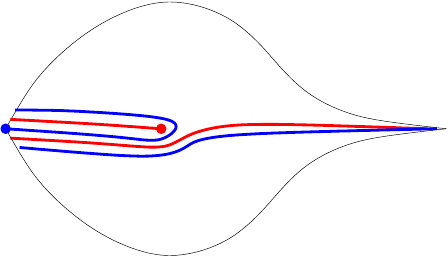}}\hskip1.2cm(vi)\scalebox{0.25}{\includegraphics{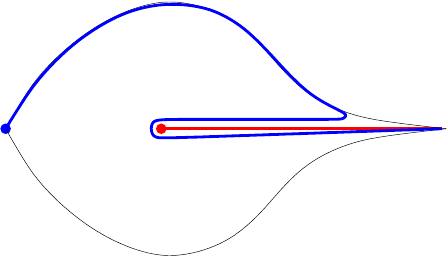}}

\caption{Proposition \ref{accessible}: based fundamental domains for $PSL(2,\Z)$ on $H$. In each example the fundamental domain $\Delta\subset\HH$ and its `twin' $\sigma\Delta$ are indicated in the upper picture by pink hatching; in the lower picture we display the corresponding pair of red and blue arcs on the standard representation of the modular surface $\HH/PSL(2,\Z)$ (a standard tile on $\HH$, with edges identified via $\sigma$ and $\rho$): see Remark \ref{based_remark} for more about these examples.}\label{isot}
\end{center}
\end{figure}

Our first observation is  that each based fundamental domain $\Delta\subset \HH$ corresponds to a choice of a pair of smooth non-intersecting arcs on the modular surface $\HH/PSL(2,\Z)$, between $[0]$ and $[i]$, and between $[0]$ and $[(-1+i\sqrt{3})/2]$ respectively (where $[z]$ denotes the projection of $z\in \HH$ to $\HH/PSL(2,\Z)$). Thus every based fundamental domain is based isotopic to the standard based fundamental domain. Figure \ref{isot} displays some examples of these pairs of arcs on the standard modular surface, and the corresponding based fundamental domains when the arcs on the modular surface are lifted to $\HH$.

In Section \ref{structure_K} we shall be interested in which torsion points in $\HH$ (points on the $PSL(2,\Z)$-orbit of $i$ or of $(-1+i\sqrt{3})/2$) are on the boundary of $\mathcal{W}_{acc}$. Let $\HH_+:=\{x+iy\in\HH:x>0\}$.

\begin{prop}\label{accessible}
The torsion points in $\overline{\mathcal{W}_{acc}}\cap\HH_+$ are $(\sigma\rho)^{n}(i)$, $(\sigma\rho^{-1})^{n}(i)$ ($n>0)$; and $(1+i\sqrt{3})/2$, $(\sigma\rho)^{n}((1+i\sqrt{3})/2)$,  $(\sigma\rho^{-1})^{n}((1+i\sqrt{3})/2)$ ($n>0$).
\end{prop}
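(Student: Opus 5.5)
Here is how I would approach the proof. A based fundamental domain $\Delta$ is determined, as observed above, by a pair of disjoint smooth arcs on the modular surface $\HH/PSL(2,\Z)$: one from the cusp $[0]=[\infty]$ to $[i]$, and one from the cusp to $[(-1+i\sqrt{3})/2]$. Lifting these arcs from the prescribed vertices gives $\partial\Delta$, and $\Delta\cup\sigma\Delta$ is a union of pieces of standard tiles. So I would first turn the statement into a combinatorial question: which tiles of the standard Farey tessellation (triangles with vertices in $\Q\cup\{\infty\}$, each split into three fundamental triangles around an order-$3$ point) can $\Delta\cup\sigma\Delta$ meet inside $\HH_+$? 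The points with $x\le 0$ are accessible by definition, so only $\HH_+$ needs analysis. A torsion point lies in $\overline{\mathcal W_{acc}}$ exactly when, for every $\epsilon$, some based domain $\Delta$ has $\Delta\cup\sigma\Delta$ coming within $\epsilon$ of it.

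The key geometric input is a constraint on where $\Delta$ and $\sigma\Delta$ can reach. Both are topological discs whose boundary passes through $0$ and $\infty$. The vertex $\infty$ is a cusp of $\Delta$, and $\Delta$ is a fundamental domain, so near $\infty$ it occupies a horoball sector of width exactly one. Likewise $\sigma\Delta$ has the cusp $0$ on its boundary. I would work in the upper half-plane and track the Jordan arc $\partial\Delta$ from $i$ through $\infty$ to $(-1+i\sqrt3)/2$. This arc is an edge paired by $\rho$ (the sides at $\infty$ are identified by $\rho\sigma$ or $z\mapsto z+1$, up to the convention). Disjointness of the translates $g\Delta$ from $\Delta$, for $g$ ranging over the parabolic stabiliser of the cusps $0$ and $\infty$, forces $\Delta\cup\sigma\Delta$ to stay in a region bounded by the translates. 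I would then use the projection picture: an arc on the modular surface from the cusp to a cone point lifts, from a fixed cusp, to a path. The winding of such an arc around the cusp $[0]$ is measured by powers of the parabolic element $\sigma\rho$ (or $\sigma\rho^{-1}$) fixing $0$. So the lifts reach exactly the torsion points in the orbit of $i$ and $(-1+i\sqrt3)/2$ that are images under $(\sigma\rho)^n$ or $(\sigma\rho^{-1})^n$ of the points adjacent to $0$, namely $i$ and $(1+i\sqrt3)/2$. Here I would verify that $\sigma\rho$ and $\sigma\rho^{-1}$ are the parabolics fixing $0$, of the form $z\mapsto z/(\pm z+1)$, so that their orbits of $i$ and $(1+i\sqrt3)/2$ accumulate at $0$ along the two sides, with only the positive side relevant in $\HH_+$.

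The proof then splits into two inclusions. \emph{Existence}: for each listed point, I would build an explicit family of based domains, as in the examples of Figure \ref{isot}, whose red arc spirals $n$ times around the cusp $[0]$. Their lifts then pass arbitrarily close to $(\sigma\rho^{\pm1})^n(i)$ or $(\sigma\rho^{\pm1})^n((1+i\sqrt3)/2)$, which puts these points in the closure. \emph{Exclusion}: every other torsion point of $\HH_+$ lies in a Farey triangle not adjacent to the cusp $0$. For such a point I would show that no based domain comes close to it. Reaching it would require $\partial\Delta$ to cross one of the geodesics $\sigma\rho^{\pm n}$-images of the imaginary axis in a way that forces two lifted arcs to intersect, or forces $\Delta$ to overlap one of its own translates near $\infty$.

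I expect the exclusion step to be the main obstacle. It needs a careful topological argument that the arcs, being embedded and disjoint on the surface, cannot wind around the cone points in a way that escapes the horoball-adjacent strip at $0$. I would try to prove it by an intersection-number argument on the modular surface. Lifting to the thrice-punctured-sphere double cover (the $\Gamma(2)$ quotient) makes the arcs honest arcs, and there the only freedom for simple arcs from a puncture is twisting about the other punctures. Only twisting about the cusp $0$ moves points into $\HH_+$ while keeping them near a fixed torsion point.
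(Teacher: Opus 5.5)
There is a genuine gap, and it starts from a miscomputation that your exclusion criterion rests on. With $\sigma(z)=-1/z$ and $\rho(z)=-1/(z+1)$ one gets $\sigma\rho(z)=z+1$, which is the parabolic fixing $\infty$. The parabolic fixing $0$ is $\sigma\rho^{-1}(z)=z/(z+1)$; the other parabolic fixing $0$, $z\mapsto z/(1-z)$, is $\rho\sigma$, and its orbit of $i$ lies in $\HH_-$. So the listed points do not accumulate at $0$ ``from both sides''. The points $(\sigma\rho)^n(i)=n+i$ and $(\sigma\rho)^n((1+i\sqrt3)/2)=n+(1+i\sqrt3)/2$ run out to the cusp $\infty$, and only the $(\sigma\rho^{-1})^n$-images run into the cusp $0$. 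Winding around the single cusp of $\HH/PSL(2,\Z)$ shows up in the lift at \emph{both} cusp vertices $0$ and $\infty$ of $\Delta$.

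Consequently your exclusion criterion is false: you claim every unlisted torsion point of $\HH_+$ lies in a Farey triangle not adjacent to $0$, and that only twisting about $0$ moves points into $\HH_+$. Two counterexamples:
\begin{itemize}
\item $2+i=(\sigma\rho)^2(i)$ lies only on the closures of the Farey triangles with vertices $\{1,2,\infty\}$ and $\{2,3,\infty\}$, neither adjacent to $0$.
\item $(3+i\sqrt3)/2=\sigma\rho((1+i\sqrt3)/2)$ is the centre of the triangle $\{1,2,\infty\}$.
\end{itemize}
Both points are on the list. Your own existence construction, twisting around the cusp, also claims to reach them. So, carried out as written, your exclusion step would contradict your existence step.

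What is missing is a correct, checkable criterion. The paper gets one from a reduction you do not make. Every based $\Delta$, and likewise $\sigma\Delta$, contains for each of its points $\zeta$ a path from the vertex $i$ to $\zeta$ that projects injectively to the modular surface. So it suffices to classify the torsion points of $\HH_+$ that are limits of endpoints of such paths.
\begin{itemize}
\item For $z$ in the orbit of $i$, perturb the path off the order-$3$ cone point. Its projection is then a simple arc on the cylinder obtained by deleting the cusp and that cone point, starting at $[i]$ and ending near $[i]$. Such an arc can only wind some number $n$ of times around the cylinder. This forces $z\in\{(\sigma\rho)^n(i),(\sigma\rho^{-1})^n(i),(\rho\sigma)^n(i),(\rho^{-1}\sigma)^n(i)\}$, and the last two families lie in $\HH_-$.
\item The $3$-torsion case is analogous: the projected arc either runs directly to near the order-$3$ cone point, or spirals around an arc joining that cone point to $[i]$.
\end{itemize}
Your intersection-number idea on $\HH/\Gamma(2)$ might be made to work. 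Note, though, that this is a $6$-fold cover, not a double cover, and the argument must be aimed at this target set, with both cusp vertices $0$ and $\infty$ in play.
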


\begin{proof}

The modular surface $\HH/PSL(2,\Z)$ has the complex structure of a sphere with three marked points.  We may place the puncture point at the north pole of this sphere, the $2\pi/3$-cone point at the south pole and the $\pi$-cone point somewhere on the equator. The images under the projection $\HH \to \HH/PSL(2,\Z)$ of the boundary of our standard fundamental domain become our two arcs on $\widehat{\C}$, emanating from the puncture point and ending at the respective marked points. This configuration of arcs is shown in the lower part  of Figure \ref{isot}(i): here the punctured sphere (the modular surface) is on its side, with the north pole (the puncture) on the right, the south pole (the $2\pi/3$-cone point) on the left, and the $\pi$-cone point in the centre. As is usual in our illustrations of the modular surface, the top and bottom edges are identified. The lower picture in Figure \ref{isot}(i) shows the pair of arcs on the modular surface  which lift to our standard tile boundaries for $PSL(2,\Z)$ on $\HH$ shown in the upper picture.

We first show that for each of the torsion points listed in the statement of the Proposition there exists an isotopy of the two arcs on the modular surface, which keeps the marked end points fixed, but brings the boundary of the corresponding fundamental domain arbitrarily close to the chosen torsion point. Figure \ref{isot}(ii) is the case of the torsion point $\sigma\rho(i)$:  we can reach this configuration from Figure \ref{isot}(i) by moving the blue arc upwards, to bring it close to the $\pi$-cone point, without moving the red arc. The new arcs lift to $\HH$ as shown in the upper part of the figure and we are done. Equally we can bring the fundamental domain boundary arbitrarily close to $\sigma\rho^{-1}(i)$ by moving the blue arc in Figure \ref{isot}(i) downwards. Our next move is to push both red and blue arcs around a line of latitude encircling the puncture point, so that they make a complete turn around the puncture (Figure \ref{isot}(iii)). In the lift to $\HH$ this brings a point on the based fundamental domain boundary coloured blue, close to $(\sigma\rho)^2(i)$.  Another push around the same circle, Figure \ref{isot}(iv), takes the boundary close to $(\sigma\rho)^3(i)$, so on. Pushing the curves the opposite way around the same circle deals with $(\sigma\rho^{-1})^n(i)$ in the obvious way. Finally we can make the boundary of the fundamental domain approach arbitrarily close to the torsion points $(\sigma\rho)^n((-1+i\sqrt{3})/2)$, and $(\sigma\rho^{-1})^{n}((-1+i\sqrt{3})/2)$ for integers $n>0$: in the case $n=1$ we apply an isotopy of the arcs in Figure \ref{isot}(iii) to the position in Figure \ref{isot}(v), and for the cases $n>1$ there are obvious generalizations.

It remains to show that these are the only torsion points in $\HH_+$ that can be approached arbitrarily closely by based fundamental domain boundaries. It will suffice to show that they are the only torsion points $z\in \HH_+$ with the property that for all $\varepsilon>0$ there exists a path $\ell$ in $\HH$ from $i$ to a point $\zeta$ in an $\varepsilon$-neighbourhood of $z$ such that $\ell$ projects injectively to the modular surface $\HH/PSL(2,\Z)$ (since every based fundamental domain $\Delta$ contains a path from the vertex $i$ to any given point $\zeta\in \Delta$). First consider $2$-torsion points. After a small perturbation of $\ell$ if necessary, we can assume $[\ell]$ does not pass through the $2\pi/3$-cone point, and thus it is a path on a cylinder (the twice-punctured sphere) which ends within distance $\varepsilon$ of its starting point [i]. Such a curve necessarily spirals $n$ times around the cylinder. Hence $z=(\sigma\rho)^{n}(i)$ or $(\sigma\rho^{-1})^{n}(i)$ (the other possibilities $(\rho\sigma)^n(i)$ and $(\rho^{-1}\sigma)^n(i)$ are in $\HH_-$). For $z$ a $3$-torsion point the argument is similar: now either the path $\ell$ is from $i$ to (within distance $\varepsilon$ of) $(1+i\sqrt{3})/2$ or its projection $[\ell]$ spirals around an arc between the images of these two points on $\HH/PSL(2,\Z)$.

\end{proof}

\begin{remark}\label{based_remark}
1. Let  $\mathcal{E}$ denote the closure of the set of all points which lie in based fundamental domains for $PSL(2,\Z)$ (Definition \ref{based_defi}). The proof of Proposition \ref{accessible} tells us $\mathcal{E}$ contains the `horodisc'  bounded by the images under $(\sigma\rho)^n$ (for all $n\in \Z$) of the geodesic segment from $(-1+i\sqrt{3})/2$ through $i$  to  $(+1+i\sqrt{3})/2$, and also the corresponding horodisc bounded by images $(\sigma\rho^{-1})^n$ of the same geodesic segment. But notice that the based fundamental domain in Figure \ref{isot}(vi) contains  points outside these two horodiscs. Determining $\mathcal{E}$ is an intriguing problem: we are not aware of any relevant work in the literature.

2. While Proposition \ref{accessible} tells us which torsion points cannot lie in $\psi(\K)$ or its boundary, it does not tell us which do, since we have no guarantee that any given based fundamental domain for $PSL(2,\Z)$ arises as $\varphi_a(\Delta_a)$ for some $\F_a$ with $a\in\K$, or, if it does, that $v_a$ can be any point in $\Delta_a$.  We believe (see Conjecture \ref{conj_an} in Section \ref{partialK}) that all $(\sigma\rho)^n(i)$ and $(\sigma\rho)^{-n}(i)$ are in this boundary of $\psi(\K)$ and that they correspond to discrete
correspondences $\F_a$ with $a\in\partial\K$. We also conjecture that the points $(\sigma\rho)^n((-1+i\sqrt{3})/2)$, and $(\sigma\rho^{-1})^n((-1+i\sqrt{3})/2)$ cannot correspond to discrete $\F_a$, but it remains an open question whether these torsion points lie on the boundary of the image of $\K$ (Section \ref{partialK}).
\end{remark}

Here we have considered the fundamental domains for the action of the group $PSL(2,\Z)$ on $\HH$ which are candidates to be $\varphi_a(\Delta_a)\subset\HH$. In the next section we will be concerned with the position of the point $\varphi_a(v_a)$ within such a fundamental domain, in particular we prove that $\varphi_a(v_a)\in\HH$ is analytic in $a$.


\section{Analyticity of canonical maps}\label{anal}

Contrary to the usual B\"{o}ttcher coordinates for rational maps, we do not know how the B\"{o}ttcher maps $\varphi_a$ for the family $\F_a$ behave with respect to the parameter, especially since we don't even necessarily have uniqueness of such maps. Unable to show that the maps $\varphi_a$ can be taken to depend analytically on $a$, we will essentially show that any mapping $a \mapsto \varphi_a(v_a)$ is analytic along $a$. In order to achieve that, we first need a holomorphic motion of fundamental pairs.

\begin{lemma}\label{lem.holomorphic_motion}

    Let $a_0 \in \K$ and $(\Delta_Q, \Delta_{J_{a_0}})$ be a Klein combination pair for $a_0$ satisfying the transversality condition. Then there exists a neighborhood $D$ of $a_0$ and a holomorphic motion
    \[ \xi_a: \Delta_{J_{a_0}} \to \widehat{\C}, \ a \in D \]
    of $\Delta_{J_{a_0}}$, along $D$ and centered at $a_0$, such that, for all $a \in D$, $\Delta_{J_a} := \xi_a(\Delta_{J_{a_0}})$ is a fundamental domain for $J_a$ and $(\Delta_Q, \Delta_{J_a})$ is a Klein combination pair satisfying the transversality condition.
    
\end{lemma}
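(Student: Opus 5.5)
The natural approach is to move $\Delta_{J_{a_0}}$ by Möbius maps conjugating $J_{a_0}$ to $J_a$, keep $\Delta_Q$ fixed, and check that the Klein combination condition survives for $a$ near $a_0$.

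\textbf{Step 1: the conjugating Möbius maps.} Let $M_a$ be a Möbius transformation depending holomorphically on $a$, with $M_{a_0}=\mathrm{id}$, $M_a(1)=1$ and $M_a(a_0)=a$. Then $M_a J_{a_0} M_a^{-1}$ is an involution with fixed points $1$ and $a$, so it equals $J_a$. Hence $M_a(\Delta_{J_{a_0}})$ is a fundamental domain for $J_a$.

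The remaining freedom (a two-parameter family of Möbius maps fixing $1$) should be used to control the derivative at $1$. The natural normalisation is to require that $M_a$ also fix a second point and be tangent to the identity at $1$, i.e. $M_a'(1)=1$. A Möbius map fixing $1$ is determined by its fixed points and multiplier, so we can take $M_a$ parabolic at $1$, sending $a_0$ to $a$. Concretely, $w=1/(Z-1)$ conjugates it to the translation $w\mapsto w+\big(1/(a-1)-1/(a_0-1)\big)$. This is holomorphic in $a$ for $a$ near $a_0$ (note $a_0\neq 1$).

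Setting $\xi_a=M_a|_{\Delta_{J_{a_0}}}$ gives a holomorphic motion: each $\xi_a$ is injective, $\xi_{a_0}=\mathrm{id}$, and the dependence on $a$ is holomorphic.

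\textbf{Step 2: the Klein combination condition.} We need $(\widehat\C\setminus\Delta_Q)\cap(\widehat\C\setminus\xi_a\Delta_{J_{a_0}})=\{1\}$. This is the step I expect to be the main obstacle, since the two complements touch at $1$ and compactness alone cannot separate them there.

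\emph{Away from $1$.} Fix a small disc $B$ around $1$. Outside $B$ the two closed complements are disjoint compact sets, so they lie at positive distance. As $M_a\to\mathrm{id}$ uniformly, they remain disjoint outside $B$ for $a$ close to $a_0$.

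\emph{Near $1$.} The boundaries are tangent at $1$. In the coordinate $w=1/(Z-1)$, the complements become regions near $\infty$ bounded by curves asymptotic to parallel lines, namely the common tangent direction. By the transversality condition these regions are separated by a strip of positive width. Because the normalisation made $M_a$ a translation in $w$, the translated region still sits in a half-plane slightly shifted. The strip persists provided the translation $\tau(a)=1/(a-1)-1/(a_0-1)$ is small compared with the strip width, which holds on a small enough $D$.

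I would first check the smoothness assumption on the boundaries, namely that the strip is genuine: the boundaries should have asymptotically straight images with separated asymptotes. If this fails, I would shrink $\Delta_{J_{a_0}}$ slightly near $1$, using the perturbation result \cite{BL1}, Proposition 3.9, to create the strip before starting the motion.

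\textbf{Step 3: transversality.} The attracting–repelling axis of $\F_a$ at $1$ has direction $\mathrm{Arg}((\bar a-7)/(\bar a-1))$, by \cite{BL1}, Proposition 3.6, and this varies continuously in $a$. The common tangent direction at $1$ is unchanged, because $M_a'(1)=1$ and $\Delta_Q$ is fixed. Transversality is an open condition, so it persists on a small $D$.

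The fundamental-domain properties (a Jordan domain with piecewise smooth boundary) are preserved because $M_a$ is Möbius. Together with Steps 1–3, this gives the Klein combination pair $(\Delta_Q,\Delta_{J_a})$ with transversality for all $a\in D$, which proves the lemma.
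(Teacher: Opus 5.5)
Steps 1 and 3, and the compactness argument away from $Z=1$, are fine. Step 2 near $Z=1$ has a genuine gap: transversality does not give a strip of positive width on \emph{both} sides of the parabolic axis, and under the lemma's hypotheses such a strip need not exist.

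Work in $w=1/(Z-1)$. There the branch of $Cov_0^Q$ fixing $1$ is $w\mapsto -w+\tfrac{1}{3}+O(1/w)$, and $J_a$ is exactly $w\mapsto -w+\tfrac{2}{a-1}$. Hence $\F_a$ is $w\mapsto w+T_a+O(1/w)$ with $T_a=\tfrac{2}{a-1}-\tfrac{1}{3}$. Let $n$ be the unit normal to the common asymptotic direction, and let $G^{+},G^{-}$ be the asymptotic gap widths between $\partial\Delta_Q$ and $\partial\Delta_{J_{a_0}}$ at the two ends. Invariance of $\partial\Delta_Q$ under the first involution and of $\partial\Delta_{J_{a_0}}$ under the second gives only $G^{+}+G^{-}=\langle T_{a_0},n\rangle$. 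This sum is $\geq 0$, and $\neq 0$ by transversality.

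If $\partial\Delta_{J_{a_0}}$ is $C^2$ at $1$, both its ends are asymptotic to one line, which invariance forces through $\tfrac{1}{a_0-1}$; the line for $\partial\Delta_Q$ likewise passes through $\tfrac{1}{6}$. Then $G^{+}=G^{-}=\tfrac{1}{2}\langle T_{a_0},n\rangle>0$ and your translation argument works. This computation is what should replace ``by the transversality condition''.

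However, piecewise smoothness allows a curvature break of $\partial\Delta_{J_{a_0}}$ at $1$, and then one gap can be $0$. On that side the two boundary arcs are asymptotically tangent in the $w$-plane without meeting; this is exactly the second case the paper's proof treats. Your motion translates all of $\widehat\C\setminus\Delta_{J_{a_0}}$ by $\tau(a)$, so both gaps change by $\langle\tau(a),n\rangle$. Since $\tau'(a_0)\neq 0$, this takes both signs for $a$ arbitrarily close to $a_0$. The tight end is then pushed into $\widehat\C\setminus\Delta_Q$ near $Z=1$, and the Klein condition fails.

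Your fallback of reshaping $\Delta_{J_{a_0}}$ first is not admissible, because the motion must be centred at $a_0$ on the \emph{given} domain. No other M\"obius family helps either: preserving tangency at $1$ forces $M_a'(1)$ to be real, hence identically $1$ by holomorphy, which is your translation.

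The missing idea is to move the two halves of the boundary differently. Keep fixed the half of $\partial\Delta_{J_{a_0}}$ near $1$ with the smaller gap. Move the compact arc joining it to the fixed point $a_0$ by any holomorphic motion taking $a_0$ to $a$. Define the other half as the $J_a$-image of the first. In $w$, $J_a-J_{a_0}$ is the constant $2\tau(a)$, so the new half is the old one translated by $2\tau(a)$. Its gap is at least $\tfrac{1}{2}\langle T_{a_0},n\rangle>0$, so it survives for small $\tau(a)$. Then extend to all of $\Delta_{J_{a_0}}$ by Slodkowski. This is essentially the paper's argument. When both gaps are positive, your M\"obius motion is a valid and more explicit alternative.
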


\begin{proof}

    Since the correspondence $\F_{a_0}$ has $Z = 1$ as a parabolic fixed point for a well defined local branch, we may find pre-Fatou coordinates $\zeta$ on a neighborhood $U$ of $1$ for which the expression of $\F_{a_0}$ is a map $f_{a_0}(\zeta) = \zeta + 1 + \mathcal{O}(1/z)$. Notice that both $J_{a_0}$ and $Cov_0^Q$ act as involutions near $Z = 1$, and therefore their expressions are maps
    \[ j_{a_0}: \zeta \mapsto -\zeta + b_{a_0} + \mathcal{O}\left( \frac{1}{z} \right) \text{ and } C: \zeta \mapsto -\zeta + B + \mathcal{O}\left( \frac{1}{z} \right), \]
    respectively, for some constants $b_{a_0}, B \in \widehat{\C}$. Since $\F_{a_0} = J_{a_0}\circ Cov_0^Q$, we see that $b_{a_0} = B + 1$. The transversality condition then tells us that $U\cap \partial\Delta_Q$ and $U\cap \partial\Delta_{J_{a_0}}$ are mapped to pairs of curves, one above the real line and the other bellow, that go to infinity in directions transversal to the real axis.

    Let us denote the two components of $U\cap \partial\Delta_Q$ in the $\zeta$ coordinate as $\gamma_Q^+$ and $\gamma_Q^-$, and the two components of $U\cap \partial\Delta_{J_{a_0}}$ in the $\zeta$ coordinate as $\gamma_{a_0}^+$ and $\gamma_{a_0}^-$, with $\gamma_Q^+$ and $\gamma_{a_0}^+$ being above the real line, and $\gamma_Q^-$ and $\gamma_{a_0}^-$ being below. By restricting $U$, we can also assume that $\gamma_Q^- = C(\gamma_Q^+)$ and $\gamma_{a_0}^- = j_{a_0}(\gamma_{a_0}^+)$. Let us also assume that there exists some $\delta$ such that $B_\delta(\gamma_Q^+)\cap B_\delta(\gamma_{a_0}^+) = B_\delta(\gamma_Q^-)\cap B_\delta(\gamma_{a_0}^-) = \emptyset$ in $\C$. Then, for a small enough neighborhood $D$ of $a_0$, denoting by $j_a$ the expression of $J_a$ in the $\zeta$ coordinate, we have that $j_a(\gamma_{a_0}^+) \subset B_\delta(\gamma_{a_0}^-)$ and, in particular, does not intersect $\gamma_Q^-$. Then one can define a holomorphic motion $\xi_a$ of $\partial\Delta_{J_{a_0}}$ along $a \in D$ by setting $\xi_a$ as identity on the part corresponding to $\gamma_{a_0}^+$, an arbitrary motion (via Slodkowski \cite{Sl} for instance) of the part connecting $\gamma_{a_0}^+$ to the point $Z = a$, and the image $J_a$ of the two previous parts. By all we have considered, the image $\xi_a(\partial\Delta_{J_{a_0}})$ will be a $J_a$-invariant Jordan curve, tangentially intersecting $\Delta_Q$ at $Z = 1$, such that one of the two connected components of $\widehat{\C}\setminus \xi_a(\partial\Delta_{J_{a_0}})$ covers $\widehat{\C}\setminus \Delta_Q$. We can extend $\xi_a$ via Slodkowski to the whole of $\Delta_{J_{a_0}}$ and the image $\xi_a(\Delta_{J_{a_0}}) = \Delta_{J_a}$ has to be a fundamental domain for $J_a$ satisfying that $(\Delta_Q, \Delta_{J_a})$ is a Klein combination pair.
    
    Now, let us deal with the case where we do not have $B_\delta(\gamma_Q^+)\cap B_\delta(\gamma_{a_0}^+) = B_\delta(\gamma_Q^-)\cap B_\delta(\gamma_{a_0}^-) = \emptyset$ for any $\delta > 0$. Without loss of generality, we may assume $B_\delta(\gamma_Q^+)\cap B_\delta(\gamma_{a_0}^+) \neq \emptyset$ for all $\delta > 0$, meaning the curves $\gamma_Q^+$ and $\gamma_{a_0}^+$ get arbitrarily close to each other near $\infty$. We can then, maybe after reducing the neighborhood $U$, find $\delta' > 0$ such that $\gamma_{a_0}^+ \subset B_{\delta'}(\gamma_Q^+)$. For $a \in D$, we see that $j_a(\zeta) = -\zeta + b_a + \mathcal{O}(1/z)$, with $b_a$ varying analytically with $a$. In particular, $b_a = b_{a_0} + \varepsilon_a = B + 1 + \varepsilon_a$, with $\varepsilon_a$ close to $0$ if $D$ is made small. Thus $j_a(\zeta) - C(\zeta) = 1 + \varepsilon_a + \mathcal{O}(1/z)$, and we find that, if $D$ is small enough, $j_a(B_{\delta'}(\gamma_Q^+))\cap C(B_{\delta'}(\gamma_Q^+)) = \emptyset$, meaning $j_a(\gamma_{a_0}^-)$ does not intersect $\gamma_Q^-$ for any $a \in D$, and we can still carry out the previous argument.
    
\end{proof}

\begin{cor}

    The Klein combination locus $\K$ is open.
    
\end{cor}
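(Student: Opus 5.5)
The corollary should follow almost immediately from Lemma \ref{lem.holomorphic_motion}. Fix $a_0\in\K$. By the definition of $\K$ there is a Klein combination pair $(\Delta_Q,\Delta_{J_{a_0}})$ for $\F_{a_0}$ satisfying all the required conditions, including transversality at the parabolic point $Z=1$. Applying the Lemma to this pair produces an open neighbourhood $D$ of $a_0$ and a holomorphic motion $\xi_a$ with the following property: for every $a\in D$, the pair $(\Delta_Q,\xi_a(\Delta_{J_{a_0}}))$ is a Klein combination pair for $\F_a$ satisfying the transversality condition. Hence every $a\in D$ lies in $\K$, so $D\subset\K$. Since $a_0$ was arbitrary, $\K$ is open.

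The only point needing care is that the pair supplied by the Lemma meets every requirement in the definition of $\K$, not merely the combination condition $\Delta_{J_a}\cup\Delta_Q=\widehat\C\setminus\{1\}$. The definition also imposes regularity conventions on fundamental domains: they must be topological discs bounded by piecewise-smooth Jordan curves, with prescribed smoothness at critical points.

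I would handle this as follows.
\begin{itemize}
\item $\Delta_Q$ is unchanged, so it satisfies the conventions automatically.
\item $\xi_a(\Delta_{J_{a_0}})$ is a Jordan domain, being the image of one under a holomorphic motion. Its boundary may fail to be piecewise smooth, since Slodkowski extensions are only quasiconformal.
\item I would repair this by a small $J_a$-equivariant smoothing of the boundary curve away from $Z=1$. The motion near $Z=1$ is the identity on $\gamma^+_{a_0}$, and its $J_a$-image is analytic there, so the tangency and transversality at $1$ are untouched.
\item The boundary curve meets $\overline{\widehat\C\setminus\Delta_Q}$ only at $Z=1$. Away from $Z=1$ it therefore stays a positive distance from that closed set, so a sufficiently small perturbation preserves disjointness.
\end{itemize}

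This smoothing step is the only real obstacle, and it is mild. If one is content to regard Jordan fundamental domains as sufficient, as the Lemma's statement implicitly does, the corollary is immediate from the Lemma.
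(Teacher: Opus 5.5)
Your argument is correct and matches the paper, which states the corollary without separate proof as an immediate consequence of Lemma \ref{lem.holomorphic_motion}: that lemma places a whole neighbourhood $D$ of any $a_0\in\K$ inside $\K$. Your extra smoothing step is reasonable care that the paper does not take at this point (it handles boundary regularity separately, in Lemma \ref{lem.smooth_boundaries}), and it does not change the approach.
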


The following Lemma establishes that we may always assume the boundaries of the fundamental domains $\Delta_Q$ and $\Delta_{J_a}$ to be smooth. This will be necessary for some arguments in the proof of analyticity.

\begin{lemma}\label{lem.smooth_boundaries}

    If $a \in \K$, we may find a Klein combination pair $(\Delta_Q, \Delta_{J_a})$ such that $\partial\Delta_Q$ and $\partial\Delta_{J_a}$ are piecewise $\mathcal{C}^1$ curves.
    
\end{lemma}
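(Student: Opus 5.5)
Given a Klein combination pair $(\Delta_Q,\Delta_{J_a})$ for $a\in\K$ satisfying transversality, the plan is to replace both boundary curves by piecewise $\mathcal{C}^1$ approximations. The approximations should remain equivariant and keep the same tangential contact at $Z=1$. The key observation is that the combination condition is an open condition on the boundary curves away from $Z=1$. The complements $\widehat\C\setminus\Delta_Q$ and $\widehat\C\setminus\Delta_{J_a}$ meet only at $1$. Hence, outside any small neighbourhood $U$ of $1$, the curves $\partial\Delta_Q$ and $\partial\Delta_{J_a}$ are at positive distance from each other, and each is at positive distance from the other's complement. We therefore handle the part inside $U$ and the part outside $U$ separately.

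\emph{Near the parabolic point.} Work in the pre-Fatou coordinate $\zeta$ used in the proof of Lemma \ref{lem.holomorphic_motion}. There $J_a$ and $Cov_0^Q$ act as $\zeta\mapsto-\zeta+b_a+\mathcal{O}(1/\zeta)$ and $\zeta\mapsto-\zeta+B+\mathcal{O}(1/\zeta)$. By transversality, the boundary arcs $\gamma_Q^\pm,\gamma_a^\pm$ tend to infinity in directions transverse to the real axis. In a far half-plane region, replace $\gamma_Q^+$ by a straight (hence smooth) ray $\tilde\gamma_Q^+$ in a transverse direction lying between $\gamma_Q^+$ and $\gamma_a^+$ asymptotically, and set $\tilde\gamma_Q^-:=C(\tilde\gamma_Q^+)$. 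Do the same for $\gamma_a^\pm$ using $j_a$. Since $C$ and $j_a$ are holomorphic, the images remain smooth. The estimate $j_a-C=1+\mathcal{O}(1/\zeta)$ shows that the lower curves stay separated, in the same way as in the proof of Lemma \ref{lem.holomorphic_motion}, so the new arcs meet only at $\zeta=\infty$, i.e.\ at $Z=1$. The new curves are then joined to the old ones by short smooth connecting arcs inside $U$.

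\emph{Away from $1$.} For $J_a$, choose a fundamental arc of $\partial\Delta_{J_a}$ from the modified part near $1$ to the fixed point $a$. Replace it by a $\mathcal{C}^1$ (for instance polygonal-then-smoothed) arc uniformly close to it, with the same endpoints. We require it to be perpendicular at $a$ to the appropriate direction, so that the union of the arc with its $J_a$-image is a $\mathcal{C}^1$ Jordan curve through $a$. For $Cov^Q$, note that $\partial\Delta_Q\setminus\{1\}$ consists of arcs paired by $Cov_0^Q$, joined at critical points. Approximate one arc of each pair by a $\mathcal{C}^1$ arc with the same endpoints, and define its partner as the image under the appropriate local branch of $Cov_0^Q$, which is holomorphic away from critical points. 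At the critical points $Z=\pm1,\infty$ (simple, or of order two at $\infty$), the approximating arc is chosen to arrive along a smooth direction. Its image then meets it at the correct angle ($\pi$ at simple critical points, $2\pi/3$ at $\infty$), as required in the definition of fundamental domains. If the approximations are close enough in the uniform sense, the Jordan property, transversality of $Q$ (injectivity on the new domain) and the disjointness of the complements away from $1$ are all preserved, by compactness and the positive distances noted above.

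\emph{Main obstacle.} I expect the main difficulty to be ensuring that the perturbed $\partial\Delta_Q$ still bounds a maximal injectivity domain for $Q$, in particular near $\infty$ and near the critical points, where $Cov_0^Q$ is not locally a homeomorphism. The approach I would try is to transport the problem through $Q$. The image $Q(\partial\Delta_Q)$ is a curve in the $Q$-plane. We smooth that curve, keeping the critical values $Q(\pm1)=\mp2$ and $\infty$ fixed, and pull it back by $Q$. This automatically yields a $Cov^Q$-equivariant boundary with the correct angles at critical points. The pulled-back curve is then checked to be $\mathcal{C}^1$ at least piecewise, which is what the statement asks for.
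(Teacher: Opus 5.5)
Your plan follows the paper's proof. Away from $Z=1$ the smoothing is routine: the paper simply asserts it, and your pairing of arcs by $Cov_0^Q$, or smoothing the cut $Q(\partial\Delta_Q)$ and pulling back, is a reasonable way to fill it in. No condition is needed at $a$, since $J_a'(a)=-1$ already makes any arc and its $J_a$-image meet at angle $\pi$. Near $Z=1$ the paper also works in the pre-Fatou coordinate, replaces the upper arcs by smooth ones, and takes the lower arcs to be their $C$- and $j_a$-images. So the delicate point is near $Z=1$, not at the critical points as you expected.

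One step fails as written: the choice of a \emph{straight} ray lying asymptotically between $\gamma_Q^+$ and $\gamma_a^+$. Tangency at $Z=1$ only gives these arcs a common asymptotic argument in the $\zeta$-plane; it does not make them asymptotic to lines. For instance, $\Re\zeta=\sqrt{\Im\zeta}$ and $\Re\zeta=\sqrt{\Im\zeta}+\tfrac12$ meet all the local requirements, yet no ray fits between them.

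The repair is to use smooth curves, as the paper does. The cleanest choice is to put both new upper arcs $\tilde\gamma_Q^+,\tilde\gamma_a^+$ inside the closed strip between $\gamma_Q^+$ and $\gamma_a^+$, i.e.\ inside $\overline{\Delta_a}$, disjoint and with $\tilde\gamma_Q^+$ on the left. Then disjointness of the new lower arcs needs no asymptotic estimate at all:
\begin{itemize}
\item near $1$, $C$ maps $\overline{\Delta_Q}\setminus\{1\}$ into $\widehat\C\setminus\Delta_Q$, by injectivity of $Q$ on $\Delta_Q$;
\item $j_a$ maps $\overline{\Delta_{J_a}}\setminus\{1\}$ into $\widehat\C\setminus\Delta_{J_a}$;
\item these two closed sets meet only at $Z=1$, by the Klein combination condition.
\end{itemize}

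If you do want rays (for example, to also get regularity at $Z=1$ itself), start them on the old arcs. You must then check explicitly that $\tilde\gamma_Q^+<\tilde\gamma_a^+<\tilde\gamma_Q^++1$, measured transversally to their common direction, with margins that dominate the $\mathcal{O}(1/\zeta)$ errors. Since $C$ and $j_a$ are asymptotically point reflections differing by a translation by $1$, this ordering is exactly what keeps $C(\tilde\gamma_Q^+)$ and $j_a(\tilde\gamma_a^+)$ apart. It does not hold automatically for two arbitrary transverse rays.
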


\begin{proof}

    Let $(\Delta_Q', \Delta_{J_a}')$ be a Klein combination pair for $a$. It is clear that we only need to construct $\Delta_Q$ and $\Delta_{J_a}$ near $Z = 1$ in order to guarantee the tangency, since the boundaries can be made piecewise smooth outside of a neighborhood of $Z = 1$ without incurring any other intersections. Again, let us use the coordinates $\zeta$ from Lemma \ref{lem.holomorphic_motion}, denote $C$ and $j_a$ the actions of $Cov_0^Q$ and $J_a$ in the $\zeta$ coordinates, and let $\gamma_Q^\pm$ and $\gamma_a^\pm$ be the pieces of the boundaries of $\Delta_Q'$ and $\Delta_{J_a}'$ in the $\zeta$ coordinates. As long as we can make either the curves above or the curves below the real line be $\mathcal{C}^1$, the result is proven by the fact that $\gamma_Q^- = C(\gamma_Q^+)$ and $\gamma_a^- = j_a(\gamma_a^+)$. If we have the property that $B_\delta(\gamma_Q^+)\cap B_\delta(\gamma_{a_0}^+) = B_\delta(\gamma_Q^-)\cap B_\delta(\gamma_{a_0}^-) = \emptyset$ for some $\delta > 0$, this is clearly possible. Without loss of generality, let us then assume that $B_\delta(\gamma_Q^+)\cap B_\delta(\gamma_{a_0}^+) \neq \emptyset$ for all $\delta > 0$. Then again we may find $\delta' > 0$ such that $\gamma_{a_0}^+ \subset B_{\delta'}(\gamma_Q^+)$. Since $\partial\Delta_Q'$ and $\partial\Delta_{J_a}'$ do not intersect other than at the point $Z = 1$, one of the curves $\gamma_Q^+$ and $\gamma_a^+$ is to the left of the other. Take any pair of smooth curves in $B_{\delta'}(\gamma_Q^+)$ that do not intersect, i.e. one is to the left of the other, landing at infinity. We can then make the boundaries of the new sets $\Delta_Q$ and $\Delta_{J_a}$ near $Z = 1$ be given by these curves and their images under $C$ and $j_a$ in the $\zeta$ coordinates. Again, we recall from the proof of Lemma \ref{lem.holomorphic_motion} that the images under $C$ and $j_a$ will not intersect.
    
\end{proof}

Before proving analyticity of $\psi$, we must isolate some special parameters: the \textit{critical orbit relation set} is defined as
\[ CR := \left\{ a \in \K\setminus \m \ | \ \pi_a(c_a) \text{ is a cone point of } \faktor{\Omega_a}{\left< \F_a \right>} \right\}. \]
Notice that $CR$ is discrete in $\K$. Thus, if we prove analyticity of $\psi$ outside of $CR$, boundedness of $\psi$ implies, via the Riemann Removable Singularity Theorem, that $\psi$ is analytic at the points of $CR$ as well --- we may look at $\psi$ as a function to $\D$ instead of $\HH$ for boundedness to be a more obvious property.

\begin{teor}\label{psi_analytic}

    Let $\psi$ be any canonical map defined in some open set $\mathcal{U} \subset \K\setminus \m$. Then $\psi$ is analytic on $\mathcal{U}$.
    
\end{teor}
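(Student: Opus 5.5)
The proof is local. Fix $a_0\in\mathcal U\setminus CR$ and show $\psi$ is analytic near $a_0$. Points of $CR$ are then handled by the removable singularity argument noted just before the statement, viewing $\psi$ as bounded with values in $\D$.

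The first step is to set up a holomorphically moving picture. Take a Klein combination pair $(\Delta_Q,\Delta_{J_{a_0}})$ for $a_0$ with piecewise $\mathcal C^1$ boundaries, by Lemma \ref{lem.smooth_boundaries}. Apply Lemma \ref{lem.holomorphic_motion} to obtain a neighbourhood $D$ of $a_0$ and a holomorphic motion $\xi_a$ giving Klein combination pairs $(\Delta_Q,\Delta_{J_a})$ for $a\in D$. By Corollary \ref{continuous_deformation} and Proposition \ref{def_fixed_a}, and after shrinking $D$, $\psi(a)=\varphi_a(v_a)$ for the B\"ottcher maps $\varphi_a$ built from these pairs. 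By Lemma \ref{fund_lemma}, $v_{a_0}$ lies in a specific tile $W_{a_0}(\Delta'_{a_0}\cup J_{a_0}\Delta'_{a_0})$, where $W$ is a finite word in branches of $Cov_0^Q$ and $J$. Because $a_0\notin CR$, the point $v_{a_0}$ is not a cone point preimage. If $v_{a_0}$ lies on a tile boundary, I replace the pair by a slightly perturbed one so that $v_{a_0}$ is interior. Since the branches of $W$ depend holomorphically on $a$, I can pull $v_a$ back by the corresponding branch of $W_a^{-1}$. This gives a point $u_a=W_a^{-1}(v_a)$ in $\Delta_a\cup J_a\Delta_a$ depending holomorphically on $a$, with $\psi(a)=\gamma\,\varphi_a(u_a)$ for a fixed $\gamma\in PSL(2,\Z)$. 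It therefore suffices to show that $a\mapsto\varphi_a(u_a)$ is holomorphic.

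The key step uses the holomorphic motion to transport complex structure. Extend the motion $\xi_a$ of $\partial\Delta_{J_{a_0}}$, together with the identity on $\partial\Delta_Q$, to a holomorphic motion of the closed fundamental domain $\overline{\Delta}_{a_0}$; Slodkowski's theorem makes this possible. The extension must be equivariant with respect to the side pairings, meaning $Cov_0^Q$ on the $Q$-sides and $J_a\circ\xi_a=\xi_a\circ J_{a_0}$ on the $J$-sides. I would also arrange that $u_a$ moves as $\xi_a(u_{a_0})$; this is possible by including $u_a$ as a moving point in the motion. The maps $\xi_a$ are quasiconformal with Beltrami coefficients $\mu_a$ depending holomorphically on $a$, and they descend to the quotient orbifold $\S$. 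Now define $\Phi_a:=\varphi_a\circ\xi_a\circ\varphi_{a_0}^{-1}$ on $\varphi_{a_0}(\Delta_{a_0})$. This extends equivariantly to a $PSL(2,\Z)$-equivariant quasiconformal self-map of $\HH$ that fixes the marked vertices. Its Beltrami coefficient $\nu_a$ is $PSL(2,\Z)$-invariant and holomorphic in $a$. By uniqueness of the conformal structure on the modular surface, that is, rigidity of a sphere with three marked points, $\Phi_a$ coincides with the normalized solution $w^{\nu_a}$ of the Beltrami equation on $\HH$ that commutes with $PSL(2,\Z)$. The Ahlfors--Bers theorem then gives that $w^{\nu_a}(x)$ is holomorphic in $a$ for each fixed $x$. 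Taking $x=\varphi_{a_0}(u_{a_0})$ gives $\varphi_a(u_a)=\Phi_a(x)$, which is holomorphic in $a$.

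I expect the main obstacle to be the equivariant extension near the parabolic point $Z=1$. There the boundaries are tangent, and one must check that the motion can be made compatible with the side pairings while keeping $\mu_a$ uniformly bounded below $1$ in norm, so that it descends to a genuine quasiconformal deformation of the punctured orbifold. I would first try working in the pre-Fatou coordinate $\zeta$ from the proof of Lemma \ref{lem.holomorphic_motion}. In that coordinate the side pairings are near-translations and reflections, and on a horizontal strip the motion can be taken explicitly as an affine interpolation that is identity on $\gamma_Q^\pm$ and follows $\xi_a$ on $\gamma_a^\pm$. This yields bounded dilatation, and it also preserves the puncture, so the deformed orbifold is again a sphere with the same three marked points.
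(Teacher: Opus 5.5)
Your proposal is correct and is essentially the paper's proof: the same ingredients appear (the holomorphic motion of Lemma \ref{lem.holomorphic_motion} extended via Slodkowski, $PSL(2,\Z)$-invariant Beltrami coefficients $\nu_a$ holomorphic in $a$, normalized integrating maps with Ahlfors--Bers, and removable singularities at $CR$), and the differences are organizational: you start from the actual B\"ottcher maps, set $\Phi_a=\varphi_a\circ\xi_a\circ\varphi_{a_0}^{-1}$ and identify it with the normalized solution by uniqueness, whereas the paper starts from the normalized solution $\psi_a$ and uses its commutative cube (uniqueness of $\Theta_a$) to show that $\psi_a\circ\varphi_{a_0}\circ\eta_a^{-1}$ is canonical; the paper also moves the tile containing $v_a$ directly, where you pull back to a point $u_a$ of the base tile. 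The equivariance near $Z=1$ that you flag as the main obstacle is already built into Lemma \ref{lem.holomorphic_motion} (identity on $\partial\Delta_Q$, $J$-equivariant on $\partial\Delta_{J_a}$), and Slodkowski's extension is automatically quasiconformal with locally uniform dilatation bounds, so the pre-Fatou interpolation is unnecessary.
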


\begin{proof}

    Let us fix a parameter $a_0 \in \mathcal{U}\setminus CR$ and a Klein combination pair $(\Delta_Q, \Delta_{J_{a_0}})$, that we assume to have piecewise $\mathcal{C}^1$ boundaries by Lemma \ref{lem.smooth_boundaries}. We can assume that $v_{a_0}$ has one representative of its grand orbit under $\F_{a_0}$ in the interior of $\Delta_{a_0} = \Delta_Q\cap \Delta_{J_{a_0}}$. Indeed, if the representatives were in the boundary of this fundamental domain, then one could make a small perturbation of the boundary of $\Delta_Q$ or $\Delta_{J_{a_0}}$ to make one of them get inside $\Delta_{a_0}$ and the other one outside $\overline{\Delta_{a_0}}$. Notice that this is only possible because $a_0 \notin CR$ --- otherwise its unique representative in $\overline{\Delta_{a_0}}$, being a cone point, would always have to lie in the boundary of any fundamental domain.

    Finally, let $\varphi_{a_0}$ be a partial B\"ottcher map such that $\psi(a_0) = \varphi_{a_0}(v_{a_0})$, and $\hat{\Delta}_{a_0}$ be the fundamental domain derived from $\Delta_{a_0}$ containing $v_{a_0}$. For a small enough neighborhood $D$ of $a_0$ in $\mathcal{U}\setminus CR$, the holomorphic motion $\xi_a$ of Lemma \ref{lem.holomorphic_motion} induces a holomorphic motion
    \[ \eta_a: \hat{\Delta}_{a_0} \to \widehat{\C}, \ a \in D \]
    such that $\hat{\Delta}_a := \eta_a(\hat{\Delta}_{a_0})$ is a fundamental domain for the action of $\F_a$ on $\Omega_a$, containing $v_a$. Since the point $v_a$ moves analytically with the parameter $a$, we may adapt this holomorphic motion in order to guarantee that $\eta_a(v_{a_0}) = v_a$ for all $a \in D$.

    Define now the family of Beltrami coefficients $\mu_a$ on $\hat{\Delta}_{a_0}$ by setting $\mu_a := \eta_a^\ast \mu_0$ for each $a \in D$, where $\mu_0$ is the trivial Beltrami coefficient. Notice that $\{\mu_a\}_{a\in D}$ is an analytic family of Beltrami coefficients, since $\{\eta_a\}_{a\in D}$ is a holomorphic motion. Let $\Pi: \HH \to \Sigma := \HH/PSL(2,\Z)$ be the projection map. Then the following diagram commutes:
    \[ \begin{tikzcd}
    \hat{\Delta}_{a_0} \arrow[d, "\varphi_{a_0}"'] \arrow[r, "\pi_{a_0}"] & \faktor{\Omega_{a_0}}{\left< \F_{a_0} \right>} \arrow[d, "\Theta_{a_0}"] \\
    \HH \arrow[r, "\Pi"']                           & {\Sigma}               
    \end{tikzcd} \]
    We can thus induce an analytic family of $PSL(2,\Z)$-invariant Beltrami forms on $\HH$ by setting
    \[ \nu_a := \Pi^\ast(\Theta_{a_0}^{-1})^\ast(\pi_{a_0}^{-1})^\ast\mu_a. \]
    The map $\pi_{a_0}^{-1}$ is not well defined on $\pi_{a_0}(\partial \hat{\Delta}_{a_0})$, but that is a 0-measure set, so the form $(\pi_{a_0}^{-1})^\ast\mu_a$ is well defined. Notice that $\nu_a$ can also be constructed by taking $\varphi_{a_0}^\ast\mu_a$ on $\varphi_{a_0}(\hat{\Delta}_{a_0}) =: T$ and spreading by the dynamics of $PSL(2,\Z)$, but the above construction makes evident that $\{\nu_a\}_{a\in D}$ is an analytic family of Beltrami coefficients on $\HH$.

    Let then $\psi_a: \HH \to \HH$ be the family of integrating maps of $\nu_a$ that fix the two points of $\partial T$ corresponding to the cone points of $\Sigma$ and one of the points in $\partial T\cap \mathbb{S}^1$, corresponding to the puncture of $\Sigma$. Since $\nu_a$ is $PSL(2,\Z)$-invariant, $\psi_a$ conjugates $PSL(2,\Z)$ with the action of another group. Since $PSL(2,\Z)$ is quasi-conformally rigid, we get that
    \[ \psi_a\circ PSL(2,\Z)\circ \psi_a^{-1} = PSL(2,\Z). \]
    In particular, the other point in $\partial T\cap \mathbb{S}^1$ is also fixed by $\psi_a$. Consider then the composition $\hat{\varphi}_a := \psi_a\circ \varphi_{a_0}\circ \eta_a^{-1}: \hat{\Delta}_a \to \psi_a(T)$:
    \[ \begin{tikzcd}
    \hat{\Delta}_{a_0} \arrow[r, "\eta_a"] \arrow[d, "\varphi_{a_0}"'] & \hat{\Delta}_{a_0} \arrow[d, "\hat{\varphi}_a"] \\
    T\subset \HH \arrow[r, "\psi_a"]                                                      & \psi_a(T) \subset \HH      
    \end{tikzcd} \]
    We shall prove that $\hat{\varphi}_a(v_a) = \psi(a)$. This implies analyticity of $\psi$ on $D$ since then
    \[ \psi(a) = \hat{\varphi}_a(v_a) = \psi_a(\varphi_{a_0}(\eta_a^{-1}(v_a))) = \psi_a(\varphi_{a_0}(v_{a_0})) = \psi_a(\psi(a_0)), \]
    and $\{ \psi_a \}_{a\in D}$ is an analytic family of quasiconformal maps. Since $a_0 \in \mathcal{U}\setminus CR$ was arbitrary, we get that $\psi$ is analytic on $\mathcal{U}\setminus CR$. Since $CR$ is discrete, Riemann's Removable Singularity Theorem then concludes that $\psi$ is analytic on all of $\mathcal{U}$.
    
    To show that $\hat{\varphi}_a(v_a) = \psi(a)$, we need to show that $[\hat{\varphi}_a(v_a)] = \Theta_a([v_a])$. This will mean that the map $a\mapsto \hat{\varphi}_a(v_a)$ is canonical, and so $\hat{\varphi}_a(v_a)$ differs from $\psi(a)$ by an element of $PSL(2, \Z)$. Since the modular group is discrete, these functions are continuous, and they coincide for $a = a_0$, we conclude that this element is identity, giving the desired equality. Let us then begin by noticing that $\eta_a$ descends to a quasiconformal map $H_a: \Omega_{a_0}/\left< \F_{a_0} \right> \to \Omega_a/\left< \F_a \right>$. Indeed, $\eta_a$ is well defined on $\overline{\hat{\Delta}_{a_0}}$, conjugating the dynamics of $\F_{a_0}$ and $\F_a$ on the boundary, and so it descends to a map $H_a$. Furthermore, $H_a$ is quasiconformal on $\pi_{a_0}(\hat{\Delta}_{a_0})$, and $\pi_{a_0}(\partial \hat{\Delta}_{a_0})$ is comprised of piecewise $\mathcal{C}^1$ curves, meaning it is a removable set and $H_a$ is quasiconformal on the whole quotient. We may thus define $\hat{\mu}_a := H_a^\ast\mu_0 = (\pi_{a_0}^{-1})^\ast\mu_a$. Similarly, the map $\psi_a$ descends to a quasiconformal map $P_a: \Sigma \to \Sigma$ since it is $PSL(2,\Z)$-invariant, and, if we set $\hat{\nu}_a := \Pi_\ast\nu_a = (\Theta_{a_0}^{-1})^\ast\hat{\mu}_a$, then $P_a^\ast\mu_0 = \hat{\nu}_a$. We thus find that
    \[ (P_a\circ \Theta_{a_0}\circ H_a^{-1})^\ast\mu_0 = (H_a^{-1})^\ast\Theta_{a_0}^\ast P_a^\ast\mu_0 = (H_a^{-1})^\ast\Theta_{a_0}^\ast \hat{\nu}_a = (H_a^{-1})^\ast \hat{\mu}_a = \mu_0, \]
    meaning $P_a\circ \Theta_{a_0}\circ H_a^{-1} = \Theta_a$, since $\Theta_a$ is the only conformal map of $\Omega_a/\left< \F_a \right>$ to $\Sigma$. These facts combined give us the following commutative cube:

    \[ \begin{tikzcd}
    \left( \faktor{\Omega_{a_0}}{\left< \F_{a_0} \right>}, \hat{\mu}_a \right) \arrow[ddd, "\Theta_{a_0}"] \arrow[rrr, "H_a"'] &                                                                                &                                                                                     & \left( \faktor{\Omega_a}{\left< \F_a \right>}, \mu_0 \right) \arrow[ddd, "\Theta_a"'] \\
                                                                   & (\hat{\Delta}_{a_0}, \mu_a) \arrow[lu, "\pi_{a_0}", hook] \arrow[r, "\eta_a"] \arrow[d, "\varphi_{a_0}"'] & (\hat{\Delta}_a,\mu_0) \arrow[ru, "\pi_a"', hook] \arrow[d, "\hat{\varphi}_a"] &                                                 \\
                                                                   & (\HH,\nu_a) \arrow[r, "\psi_a"'] \arrow[ld, "\Pi"', two heads]         & (\HH,\mu_0) \arrow[rd, "\Pi", two heads]                                             &                                                 \\
    (\Sigma,\hat{\nu}_a) \arrow[rrr, "P_a"]                                          &                                                                                &                                                                                     & (\Sigma,\mu_0)                                         
    \end{tikzcd} \]

    In particular, we have
    \[ \Theta_a([v_a]) = P_a(\Theta_{a_0}(H_a^{-1}([v_a]))) = P_a(\Theta_{a_0}([v_0])) = P_a([\varphi_{a_0}(v_0)]) = [\hat{\varphi}_a(v_a)], \]
    concluding the proof.

\end{proof}

The heuristic behind Theorem \ref{psi_analytic} is basically that the maps $\Theta_a$ depend analytically on $a$ --- or equivalently that the surfaces $\Omega_a/\left< \F_a \right>$ depend analytically on $a$. The definition of canonical map demands that $[\psi(a)] = \Theta_a[v_a]$, and thus, by taking an appropriate local branch of the inverse of $\Pi$, we see that $\psi(a) = \Pi^{-1}(\Theta_a[v_a])$.

Lemma \ref{lem.holomorphic_motion} can also be used to better understand the quasi-conformal conjugacy classes of elements in $\K$. In order to do that, we first state a Lemma that is a direct consequence of the results in \cite{BL1}.

\begin{lemma}

    Let $a \in \K\setminus\m$. Then $\Lambda_a$ has empty interior.
    
\end{lemma}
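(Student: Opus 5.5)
We argue via the structure of the limit set when $a\in\K\setminus\m$. Fix a Klein combination pair $(\Delta_Q,\Delta_{J_a})$. By Lemma \ref{fund_lemma}, both $c_a$ and $v_a$ lie in the regular set $\Omega_a$. Since $\Lambda_a=\Lambda_{+,a}\cup\Lambda_{-,a}$ and $\Lambda_{-,a}=J_a(\Lambda_{+,a})$, it suffices to show that $\Lambda_{-,a}$ has empty interior. The plan is to realise $\Lambda_{-,a}$ as the filled Julia set of a (pinched) quadratic-like map with escaping critical point, and then use the standard fact that such a set is a Cantor-type set of measure zero, or at least one with no interior.

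First, following \cite{BL1}, on a pinched neighbourhood $V$ of $\Lambda_{-,a}$ the branch of $\F_a$ fixing $\Lambda_{-,a}$ is a $2$-to-$1$ map $f_a:V'\to V$ with $\overline{V'}\subset V\cup\{1\}$. Concretely, $V=\widehat\C\setminus\overline{\Delta_{J_a}}$ and $V'$ is its preimage inside $\widehat\C\setminus\Delta_Q$, with a single critical point $c_a$. Moreover $\Lambda_{-,a}=\bigcap_n f_a^{-n}(\overline V)$, which is the $J_a$-image of the description of $\Lambda_{+,a}$ in Section \ref{prelim}.

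Second, since $a\notin\m$, the critical point $c_a$ escapes $V'$ after finitely many steps. Suppose, for contradiction, that $\Lambda_{-,a}$ contained an open set $U$.
\begin{itemize}
\item[(a)] The iterates $f_a^n|_U$ are defined for all $n$ and take values in $V$. Their images omit the complement of $V$, which has nonempty interior (it contains part of $\Delta_{J_a}$). By Montel's theorem the family $\{f_a^n|_U\}$ is therefore normal, so $U$ lies in a Fatou-type component of $f_a$.
\item[(b)] Every such component is a component of the interior of the filled Julia set of a pinched quadratic-like map. The classification of periodic components would then force an attracting or parabolic basin, or a rotation domain. For a pinched-quadratic-like map, \cite{BL1} shows there are no wandering domains, again via straightening.
\item[(c)] Each alternative requires the critical orbit either to accumulate inside $\Lambda_{-,a}$ or to lie in it. This contradicts the escape of $c_a$. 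The one exception is the basin of the parabolic point $1$. That basin, however, lies in $\Omega_a$: the attracting petal at $1$ intersects the tiles $\F_a^n(\Delta_a')$, by the transversality condition.
\end{itemize}

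Alternatively, and more cleanly, I would apply the straightening/surgery of \cite{BL1}, which for $a\in\K$ conjugates $f_a$ near $\Lambda_{-,a}$ to a parabolic quadratic rational map $P_A$ whose critical point escapes to the parabolic basin. The filled Julia set of $P_A$ is then a Cantor set, and the conjugacy is a homeomorphism (quasiconformal off the pinch point). It follows that $\Lambda_{-,a}$ is a Cantor set, hence has empty interior.

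The main obstacle is the pinch point $Z=1$: the map $f_a$ is not genuinely quadratic-like there, so the standard Douady–Hubbard arguments must be replaced by their parabolic analogues. I would handle this by citing the pinched straightening theorem of \cite{BL1}, which is presumably why the statement is called a direct consequence of that work. Finally, I would check that $\Lambda_{+,a}$ meets $\Lambda_{-,a}$ only at $1$, so that the union of two sets with empty interior has empty interior; this holds because both sets are closed, and a finite union of closed sets with empty interior has empty interior (Baire).
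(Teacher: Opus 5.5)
Your straightening route is the paper's proof. The paper cites Theorem B of \cite{BL1} for a hybrid equivalence between $\F_a$ on $\Lambda_{a,-}$ and a $Per_1(1)$ map on its filled Julia set, notes that $a\notin\m$ makes this set disconnected and hence a Cantor set, and concludes for $\Lambda_a=\Lambda_{a,-}\cup J_a(\Lambda_{a,-})$ exactly as you do; your Montel/classification detour in (a)--(c) leans on the same straightening anyway and can be dropped. One slip in your set-up: since $\Lambda_{a,-}\subset\widehat\C\setminus\Delta_Q\subset\Delta_{J_a}\cup\{1\}$, the target of the pinched quadratic-like map should be $V=\Delta_{J_a}$ (with $V'=f_a^{-1}(V)\subset\widehat\C\setminus\Delta_Q$), not $\widehat\C\setminus\overline{\Delta_{J_a}}$, which is the disc on the $\Lambda_{a,+}$ side and is disjoint from $\Lambda_{a,-}$.
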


\begin{proof}

    According to Theorem B in \cite{BL1}, the action of $\F_a$ on $\Lambda_{a,-}$ is hybrid equivalent to the action of some map in the $Per_1(1)$ family on its filled Julia set. We have $a \in \m$ if and only if $\Lambda_{a,-}$ is connected. Therefore, $a \in \K\setminus\m$ implies $\Lambda_{a,-}$ is disconnected. Now, for maps in the $Per_1(1)$ family, the filled Julia set being disconnected implies it is a Cantor set, and in particular has empty interior. Thus $\Lambda_a = \Lambda_{a,-}\cup J_a(\Lambda_{a,-})$ also has empty interior. 
    
\end{proof}

\begin{teor}

    Let $a_0 \in \K\setminus (\m\cup CR)$. Then $\F_a$ is quasi-conformally conjugated to $\F_{a_0}$ whenever $a$ is in the same connected component of $\K\setminus (\m\cup CR)$ as $a_0$.
    
\end{teor}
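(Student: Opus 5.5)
The plan is to show that the set of parameters in a component $\mathcal{C}$ of $\K\setminus(\m\cup CR)$ for which $\F_a$ is quasiconformally conjugate to $\F_{a_0}$ is open and closed in $\mathcal{C}$. Since quasiconformal conjugacy is an equivalence relation, it suffices to prove a local statement: every $a_1\in\mathcal{C}$ has a neighbourhood $D$ such that $\F_a$ is quasiconformally conjugate to $\F_{a_1}$ for all $a\in D$. Then the equivalence classes within $\mathcal{C}$ are open, hence also closed, and connectedness gives the theorem.

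For the local statement, fix $a_1\in\mathcal{C}$ and a Klein combination pair with piecewise $\mathcal{C}^1$ boundaries (Lemma \ref{lem.smooth_boundaries}). As in the proof of Theorem \ref{psi_analytic}, since $a_1\notin CR$, we may perturb the pair so that the grand orbit of $v_{a_1}$ has a representative in the interior of $\Delta_{a_1}$, with none on $\partial\Delta_{a_1}$. Lemma \ref{lem.holomorphic_motion} gives a holomorphic motion $\xi_a$ of $\Delta_{J_{a_1}}$ over a disc $D$, keeping $\Delta_Q$ fixed. I would arrange, exactly as done for $\eta_a$ in the proof of Theorem \ref{psi_analytic}, that the motion of $\overline{\Delta_{a_1}}$ is equivariant on the boundary: $J_a\circ\xi_a=\xi_a\circ J_{a_1}$ on $\partial\Delta_{J_{a_1}}$ and $Cov_0^Q$-compatible on $\partial\Delta_Q$. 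I would also arrange that it carries the representative of the orbit of $v_{a_1}$ in the tile to the corresponding point for $v_a$, which moves holomorphically. Shrinking $D$ keeps this point in the interior of the moving tile.

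Next I spread $\eta_a:=\xi_a|_{\overline{\Delta_{a_1}}}$ by the dynamics. For $Z\in\Omega_{a_1}$, write $Z=W(Y)$ with $Y\in\Delta'_{a_1}\cup J_{a_1}\Delta'_{a_1}$ and $W$ a word in $J$ and $Cov_0$. Define $h_a(Z):=W_a(\eta_a(Y))$, where $W_a$ is the corresponding branch for $\F_a$. This is well defined on tile boundaries by the equivariance of $\eta_a$. The key point, and the reason $CR$ is excluded, is that in both $\F_{a_1}$ and $\F_a$ the critical point lies in the same combinatorial position in the tiling. The tiles and their branched images (the ramified tiles of Figure \ref{mandelcorr}) therefore correspond, so $h_a$ is a homeomorphism $\Omega_{a_1}\to\Omega_a$ conjugating $\F_{a_1}$ to $\F_a$. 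It is quasiconformal, since the branches of $\F$ are holomorphic, so the dilatation equals that of $\eta_a$ everywhere. It is also holomorphic in $a$, since compositions of holomorphic branches with a holomorphic motion give a holomorphic motion. Hence $h_a$ is a holomorphic motion of the dense open set $\Omega_{a_1}$, parametrised by $D$.

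Finally, by the $\lambda$-lemma (or Slodkowski) $h_a$ extends to a holomorphic motion of $\widehat\C$. By continuity it conjugates $\F_{a_1}$ to $\F_a$ on the closure, and the closure is everything because $\Lambda_{a_1}$ has empty interior by the preceding Lemma. Extensions of holomorphic motions are quasiconformal, so $\F_a$ is quasiconformally conjugate to $\F_{a_1}$. I expect the main obstacle to be the third step: checking that the equivariant extension is well defined and injective near the critical point and its grand orbit, and across the parabolic point $Z=1$. At $Z=1$ I would use the pre-Fatou coordinates of Lemma \ref{lem.holomorphic_motion}, where the tiles accumulate in a controlled, transversal way. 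Near the critical orbit I would rely on $a\notin CR$, which keeps the critical grand orbit off cone points uniformly on $D$.
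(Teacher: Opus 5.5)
Your proposal is correct and follows essentially the same route as the paper. You localise (quasiconformal conjugacy is an equivalence relation), take the holomorphic motion of a Klein combination pair from Lemma \ref{lem.holomorphic_motion}, normalise it so that the critical value (or its representative in the tile) moves to the critical value, spread it equivariantly over $\Omega_{a_1}$, and extend to $\widehat\C$ by the $\lambda$-lemma using that $\Lambda_{a_1}$ has empty interior. For the ramified tile around $c_a$, which you flagged as the main obstacle, the paper makes explicit that of the two possible lifts through the local degree-two branch one takes the one agreeing with the extension already defined on the tile boundary (equivalently, the lift continuous in $a$ that is the identity at $a_1$).
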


\begin{proof}

    Notice that it is enough to show the result for every $a$ in a neighborhood of $a_0$, since quasi-conformal conjugacy is an equivalence relation. Let $(\Delta_Q, \Delta_{J_{a_0}})$ be a Klein combination pair for $a_0$. Lemma \ref{lem.holomorphic_motion} guarantees a neighborhood $D$ of $a_0$ and a holomorphic motion $\xi_a: \Delta_{J_{a_0}} \to \widehat{\C}$, $a \in D$, such that $\Delta_{J_a} := \xi_a(\Delta_{J_{a_0}})$ is a fundamental domain for $J_a$ and $(\Delta_Q, \Delta_{J_a})$ is a Klein combination pair for $a$. In particular, $\xi_a$ gives a holomorphic motion of $\Delta_{a_0} = \Delta_Q\cap \Delta_{J_{a_0}}$, and it also conjugates the actions of $\F_{a_0}$ on the boundary $\partial\Delta_{a_0}$ and of $\F_a$ on the image $\xi_a(\partial\Delta_{a_0}) = \partial\Delta_a$. That means we can dynamically extend $\xi_a$ to a holomorphic motion of $\Delta_{a_0}\cup J_{a_0}\Delta_{a_0}$, still conjugating $\F_{a_0}$ and $\F_a$.
    
    Recall from Lemma \ref{fund_lemma} that
    \[ v_{a_0} \in \bigcup_{n\geq 0}\F_{a_0}^{-n}(\Delta_{a_0}'\cup J_{a_0}\Delta_{a_0}') \ \text{ and } \ c_{a_0} \in \bigcup_{n\geq 1}\F_{a_0}^{-n}(\Delta_{a_0}'\cup J_{a_0}\Delta_{a_0}'), \]
    where $\Delta_{a_0}' = \overline{\Delta_{a_0}}\setminus \{P\}$, $P$ the parabolic fixed point. In particular, there exists some $k = k(a_0) \geq 0$ such that
    \[ v_{a_0} \in \F_{a_0}^{-k}(\Delta_{a_0}'\cup J_{a_0}\Delta_{a_0}') \ \text{ and } \ c_{a_0} \in \F_{a_0}^{-(k+1)}(\Delta_{a_0}'\cup J_{a_0}\Delta_{a_0}'). \]
    By perturbing the fundamental domain $\Delta_{J_{a_0}}$ slightly, we can assume that
    \[ v_{a_0} \in \F_{a_0}^{-k}(\Delta_{a_0}\cup J_{a_0}\Delta_{a_0}) \ \text{ and } \ c_{a_0} \in \F_{a_0}^{-(k+1)}(\Delta_{a_0}\cup J_{a_0}\Delta_{a_0}), \]
    that is $v_{a_0}$ and $c_{a_0}$ fall in the interior of some tile, not in the boundary. By making $D$ smaller if necessary, we get
    \[ v_a \in \F_a^{-k}(\Delta_a\cup J_a\Delta_a) \ \text{ and } \ c_a \in \F_a^{-(k+1)}(\Delta_a\cup J_a\Delta_a) \]
    for all $a \in D$. Thus, since there are no critical points in the first $k$ pre-images of $\Delta_a\cup J_a\Delta_a$, we can extend $\xi_a$ dynamically to a holomorphic motion
    \[ \xi_a: \bigcup_{0\leq n\leq k}\F_{a_0}^{-n}(\Delta_{a_0}'\cup J_{a_0}\Delta_{a_0}') \to \widehat{\C} \]
    with $\xi_a(\bigcup_{0\leq n\leq k}\F_{a_0}^{-n}(\Delta_{a_0}'\cup J_{a_0}\Delta_{a_0}')) = \bigcup_{0\leq n\leq k}\F_a^{-n}(\Delta_a'\cup J_a\Delta_a')$, still conjugating the actions of $\F_{a_0}$ and $\F_a$ in its domain of definition. By adapting the original $\xi_a$ on $\Delta_{a_0}$ if necessary, we can also assume that $\xi_a(v_{a_0}) = v_a$.
    
    Now, we can extend $\xi_a$ further to the tile in $\F_{a_0}^{-(k+1)}(\Delta_{a_0}\cup J_{a_0}\Delta_{a_0})$ containing the critical point, but there may be two possible extensions, since the local degree of $\F_a$ around $c_a$ is $2$. Noticing that only one of them will agree with the current extension of $\xi_a$ at the boundary tells us we can indeed further extend $\xi_a$ to $\bigcup_{0\leq n\leq k+1}\F_{a_0}^{-n}(\Delta_{a_0}'\cup J_{a_0}\Delta_{a_0}')$, still conjugating dynamics. Because we don't have any more critical points, we can continue to dynamically extend $\xi_a$ further to all of $\Omega_{a_0}$, obtaining a holomorphic motion $\xi_a: \Omega_{a_0} \to \widehat{\C}$, which conjugates the dynamics of $\F_{a_0}$ on $\Omega_{a_0}$ and $\F_a$ on $\xi_a(\Omega_{a_0}) = \Omega_a$. By the $\lambda$-Lemma of Mañé-Sad-Sullivan \cite{MSS}, $\xi_a$ extends to a holomorphic motion of $\overline{\Omega_{a_0}} = \widehat{\C}$, since $\Lambda_{a_0}$ has empty interior. This final extended holomorphic motion must still conjugate dynamics, now on the whole sphere, showing thus that $\F_a$ is quasi-conformally conjugated to $\F_{a_0}$ for all $a \in D$.
    
\end{proof}

Notice that this result cannot be true for parameters in $CR$, since the critical orbit relation must be preserved by a quasi-conformal conjugacy; indeed, notice that $\pi_a(c_a)$ is a cone point of $\Omega_a/\left< \F_a \right>$ if and only if the grand orbit of $c_a$ coincides with that of $a$ or $\infty$, which is a property that any topological conjugacy must preserve. Meanwhile, the critical relation cannot be persistent on a neighborhood of a point, or it would be persistent throughout the entire family $\F_a$ by analytic continuation. The quasi-conformal conjugacy classes of points in $\m$ are as well understood as the classes in $M_1$, since the homeomorphism between the two sets is dynamical. Thus, if we denote by $C$ the set of centers of interior components of $\m$, any connected component of $\K\setminus (CR\cup \partial\m\cup C)$ is contained in a single quasi-conformal conjugacy class, while every point in $CR\cup \partial\m\cup C$ is quasi-conformally rigid --- i.e. any other correspondence quasi-conformally conjugated to it is in fact conformally conjugated to it.


\section{Critical relations in $\K$ and in  $\widehat{\C}\setminus\K$}\label{crit-rel}

The definition of a {\it critical relation}, introduced in the previous Section for values of $a$ in $\K$, can be extended to $a\in \widehat{\C}$ in general, as follows.

\begin{defi}\label{crit_coincidence}
(i) A {\it critical relation parameter} is a value of $a$ such that the grand orbit of the critical point $Z=-1$, under $\F_a$ and $\F_a^{-1}$, contains the fixed point $Z=a$ of $J_a$ or the fixed point $Z=\infty$ of $Cov^Q$. We call the first type a {\it critical $2$-relation} and the second type a {\it critical $3$-relation}.

(ii) We describe a critical relation as `forwards' if $a$ or $\infty$ is in the forwards orbit of the critical value $v_a$ under $\F_a$. We call it `backwards' if the critical point $c_a$ ($Z=-1$) is in the forwards orbit of $a$ or $\infty$ under $\F_a$.
\end{defi}

A priori we might expect to find a further category of critical relations arising from `mixed' iteration of $\F_a$ and $\F_a^{-1}$. However the particular structure of the correspondence $\F_a=J_a\circ Cov_0^Q$ as a `reversible map of triples', \cite{BP2}, means that `mixed' critical relations can be reduced to `unidirectional' form:

\begin{lemma}
Every critical relation is a forwards critical relation (FCR) or a backwards critical relation (BCR).
\end{lemma}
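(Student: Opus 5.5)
The plan is to exploit the reversibility $\F_a^{-1}=J_a\circ\F_a\circ J_a$ together with the fact that the two special points $a$ and $\infty$ are fixed by the involution that ``reverses'' them. First record the basic identities in the $Z$-coordinate. Since $\F_a=J_a\circ Cov_0^Q$ and both $J_a$ and $Cov_0^Q$ are involutive correspondences, we have $\F_a^{-1}=Cov_0^Q\circ J_a=J_a\circ\F_a\circ J_a$. Consequently $w\in\F_a^{n}(z)$ if and only if $J_a(z)\in\F_a^{n}(J_a(w))$. Also $J_a(a)=a$, $Cov_0^Q(\infty)=\infty$, $\F_a(-1)=\{J_a(-1),v_a\}$ with $v_a=J_a(2)$, and $\F_a^{-1}(\infty)=Cov_0^Q(J_a(\infty))$.

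Next, reduce an arbitrary grand orbit relation to a word. If $p\in\{a,\infty\}$ lies in the grand orbit of $-1$, there is a finite chain $-1=z_0,z_1,\dots,z_m=p$ with each $z_{i+1}\in\F_a^{\pm1}(z_i)$. Writing each step as $J_a$ or $Cov_0^Q$, the chain is a word in the two involutions $J_a$ and $Cov_0^Q$ applied to $-1$. We cancel consecutive repeated letters: $J_aJ_a=\mathrm{id}$ exactly, while $Cov_0^Q\circ Cov_0^Q$ returns either to the start or to the third point of the $Q$-fibre, i.e.\ the point reached by one $Cov_0^Q$. After cancellation we obtain an alternating word $\dots J_aCov_0^QJ_aCov_0^Q\dots$. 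This is exactly the statement that ``mixed'' iteration of a reversible map of triples reduces to unidirectional words. I expect this to be the main technical point. The care needed is that $Cov_0^Q$ is multivalued, so ``cancellation'' means that $Cov_0^Q(Cov_0^Q(x))\subset\{x\}\cup Cov_0^Q(x)$. This holds because $Cov_0^Q(x)\cup\{x\}$ is the full fibre $Q^{-1}(Q(x))$, with multiplicities handled at critical points.

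Then split according to the first and last letters of the alternating word. Starting at $-1$:
\begin{itemize}
\item If the word begins with $Cov_0^Q$ and then $J_a$, the start passes through $v_a$ or $J_a(-1)$.
\item If it begins with $J_a$, we get $J_a(-1)\in\F_a(-1)$ anyway, or else we are moving backward.
\end{itemize}
At the end, a trailing $J_a$ is absorbed when $p=a$ (since $J_a(a)=a$), and a trailing $Cov_0^Q$ is absorbed when $p=\infty$ (since $Cov_0^Q(\infty)=\infty$). So we may choose the endpoint letter freely to make the alternating word either of the form $(J_aCov_0^Q)^n$, which is forward $\F_a^n$, or of the form $(Cov_0^QJ_a)^n$, which is backward $\F_a^{-n}$.

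In the forward case, $p\in\F_a^n(-1)$. The branch through $J_a(-1)$ must be handled separately: $J_a(-1)$ is the backward critical point. Applying the reversibility identity converts $p\in\F_a^{n}(J_a(-1))$ into $-1\in\F_a^{n}(J_a(p))=\F_a^n(p)$, which is a BCR. The branch through $v_a$ gives an FCR. In the backward case, $p\in\F_a^{-n}(-1)$ means $-1\in\F_a^n(p)$, again a BCR. The remaining verification is routine bookkeeping of the degenerate small-$n$ cases, such as $p$ being equal to $-1$ or $2$ directly, which I would check by hand.
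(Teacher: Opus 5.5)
Your argument is essentially the paper's direct proof: reduce to an alternating word in $J_a$ and $Cov_0^Q$, then split on the first and last letters using $J_a(a)=a$, $Cov_0^Q(-1)=\{-1,2\}$ and $Cov_0^Q(\infty)=\infty$. Your reversibility step $\F_a^{-1}=J_a\F_aJ_a$ plays the role the paper assigns to the $O_\pm(x)\cup O_\pm(J_a(x))$ description.

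There is one slip in the forward branch through $J_a(-1)$. There you actually get $-1\in\F_a^{n-1}(J_a(p))$, and for $p=\infty$ this is not $\F_a^{n-1}(\infty)$, since $J_a(\infty)=(a+1)/2\neq\infty$. However, $\F_a(\infty)=\{J_a(\infty)\}$, so $\F_a^{n-1}(J_a(\infty))=\F_a^{n}(\infty)$ and the relation is still a BCR. Alternatively, delete the trivial step $-1\mapsto -1$ of $Cov_0^Q$, which lands you directly in the backward case.
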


\begin{proof}
For $x\in \widehat{\C}$ define $O_\pm(x)$ to be the union of the orbit of $x$ under $\F_a$ with the orbit of $x$ under $\F_a^{-1}$. By Theorem $6$ in Section 5 of \cite{BP2}, the grand orbit of $x$ under $\F_a$ and $\F_a^{-1}$ is $O_\pm(x) \cup O_\pm(J_a(x))$. The result can be deduced from this fundamental observation, or else directly as follows.

The points on the grand orbit of $z\in\widehat{\C}$ can all be expressed in the reduced form $W(z)$ where $W$ is a word with alternate `letters' $J_a$ and $Cov_0$. So every critical relation takes the form $W(-1)=a$ or $W(-1)=\infty$. It is now a straightforward exercise to verify that using the relations $J_a(a)=a$, $Cov_0(-1)\in\{-1,2\}$ and $Cov_0(\infty)=\infty$, every critical relation can be manipulated into the stated form. There are $8$ cases to consider, $4$ possibilities for the first and last `letters' of $W$, for $2$-relations and $3$-relations.
\end{proof}

We remark that our definitions permit critical relations to be both forwards and backwards critical.

\subsection{Critical relations in $\K\setminus\m$}

\begin{defi}
A torsion point of $\HH$ is a point $z\in\HH$ which is stabilised by a non-trivial subgroup of $PSL(2,\Z)$, necessarily cyclic of order $2$ or $3$, generated by a conjugate of $\sigma$ or $\rho$.
\end{defi}

\begin{prop}\label{CRinK}
(i) There are no backwards critical relations in $\K\setminus\m$;

(ii) $a\in \K\setminus\m$ is a critical relation if and only if $\psi(a)\in\HH$ is a torsion point for any canonical map $\psi$ defined at $a$.
\end{prop}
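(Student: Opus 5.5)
For part (i), I will use Lemma \ref{fund_lemma}. A backwards critical relation says that the critical point $c_a$ ($Z=-1$) lies in the forward $\F_a$-orbit of $a$ or of $\infty$, i.e. $c_a\in\F_a^n(a)$ or $c_a\in\F_a^n(\infty)$ for some $n\ge 0$. Fix a Klein combination pair $(\Delta_Q,\Delta_{J_a})$ for $a\in\K\setminus\m$. The fixed point $a$ of $J_a$ lies on $\partial\Delta_{J_a}$, and every point of this boundary other than $1$ lies in the interior of $\Delta_Q$. Since $\infty$ is the fixed critical point of $Cov_0^Q$, it lies on $\partial\Delta_Q$ and hence inside $\overline{\Delta_{J_a}}$. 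So both $a$ and $\infty$ lie in $\Delta'_a$, in fact on its boundary. The forward images $\F_a^n(\Delta'_a\cup J_a\Delta'_a)$, $n\ge 0$, are contained in $\overline\Delta_Q$, by the decomposition of $\overline\Delta_Q$ in the proof of Lemma \ref{fund_lemma}. On the other hand, the proof of that lemma puts $c_a$ in $\widehat\C\setminus\Delta_Q=\Lambda_{a,-}\cup\bigcup_{n\ge1}\F_a^{-n}(\Delta'_a\cup J_a\Delta'_a)$.

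The main work in (i) is to rule out overlap on the common boundary. The sets $\F_a^{-m}(\ldots)$ with $m\ge1$ and $\F_a^{n}(\ldots)$ with $n\ge0$ can only meet along images of $\partial\Delta_Q$. I would first perturb the boundary near $a$ and $\infty$: keep $\infty$ on $\partial\Delta_Q$, which is forced, but track the tile combinatorics using the Klein combination (ping-pong) structure. Forward iterates of a point of $\overline{\Delta_Q}$ other than $1$ stay in $\overline{\Delta_Q}$ and never leave it through $\Delta_Q^c$. The only way to land on $c_a$, which lies in the exterior of $\Delta_Q$, is through the boundary identifications of $Cov_0$. But $c_a$ is a critical point of $Q$ and so cannot be a boundary point of $\Delta_Q$ paired by $Cov_0$ with another boundary point unless $d_a$ is its partner. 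I expect this boundary bookkeeping to be the main obstacle. Alternatively, I could argue more cleanly with the ``forwards critical point cannot be in $\F_a^n(J(c_a))$'' remark following Lemma \ref{fund_lemma}, adapted to $a$ and $\infty$.

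For part (ii), I will use the canonical property $[\psi(a)]=\Theta_a([v_a])$ together with the fact that $\Theta_a$ sends cone points to cone points. A point $z\in\HH$ is a torsion point exactly when $\Pi(z)$ is one of the two cone points of $\HH/PSL(2,\Z)$. So $\psi(a)$ is a torsion point iff $[v_a]$ is a cone point of $\Omega_a/\langle\F_a\rangle$. The cone points are the images of the grand orbits of $a$ (order $2$, from $J_a$) and of $\infty$ (order $3$, from $Cov^Q$). Since $v_a$ and $c_a$ lie in the same grand orbit, $[v_a]$ is a cone point iff the grand orbit of $c_a$ contains $a$ or $\infty$, which is exactly Definition \ref{crit_coincidence}. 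This holds for every canonical $\psi$, since two such maps differ locally by elements of $PSL(2,\Z)$, which preserve the set of torsion points.
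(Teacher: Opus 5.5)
\textbf{Part (i) has a genuine gap.} You reduce it to the overlap of $\overline{\Delta_Q}$ with $\widehat\C\setminus\Delta_Q$ along $\partial\Delta_Q$, and then leave this ``boundary bookkeeping'' undone. The alternative via the remark after Lemma \ref{fund_lemma} runs into the same difficulty, because $a$ and $\infty$ lie on $\partial\Delta_a$. Note also that $-1$ can genuinely lie on $\partial\Delta_Q$, as in Remark \ref{deform_rem}.

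The difficulty is an artifact of working with closed tiles. The missing idea is that $\F_a$ maps the \emph{open} transversal $\Delta_Q$ into itself. Since $(\widehat\C\setminus\Delta_Q)\cap(\widehat\C\setminus\Delta_{J_a})=\{1\}$ and $1\in\overline{\Delta_Q}\cap\overline{\Delta_{J_a}}$, you get two inclusions:
\begin{itemize}
\item $Cov_0^Q(\Delta_Q)\subset\widehat\C\setminus\overline{\Delta_Q}\subset\Delta_{J_a}$, because injectivity and openness of $Q$ prevent any other preimage of a point of $Q(\Delta_Q)$ from lying in $\overline{\Delta_Q}$;
\item $J_a(\Delta_{J_a})\subset\widehat\C\setminus\overline{\Delta_{J_a}}\subset\Delta_Q$.
\end{itemize}
Hence $\F_a(\Delta_Q)\subset\Delta_Q$. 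You already noted that $a\in\partial\Delta_{J_a}\setminus\{1\}\subset\Delta_Q$. Since $\infty$ is a critical point of $Q$, $\infty\in(\widehat\C\setminus\Delta_Q)\setminus\{1\}\subset\Delta_{J_a}$, so $\F_a(\infty)=J_a(\infty)\in\Delta_Q$. Thus every point of the forward orbits of $a$ and of $\infty$, other than $\infty$ itself, lies in the open set $\Delta_Q$. That set cannot contain the critical point $-1$ of $Q$, so no boundary analysis is needed. This is the paper's one-line proof. The paper mentions only $a$; your inclusion of $\infty$ is correct and is handled as just shown.

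\textbf{Part (ii) is correct, and your route is cleaner than the paper's.} You use only four ingredients: the canonical identity $\Pi(\psi(a))=\Theta_a([v_a])$; the fact that $\Theta_a$ matches cone points; the fact that torsion points are exactly the $\Pi$-preimages of the cone points of $\HH/PSL(2,\Z)$; and the grand-orbit form of Definition \ref{crit_coincidence}.

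The paper instead uses Lemma \ref{fund_lemma} to push $v_a$ forward to some $Z_0\in\Delta'_a\cup J_a\Delta'_a$. It then transfers torsion-ness from $\varphi_a(Z_0)$ to $\psi(a)=\varphi_a(v_a)$ via the equivariant extension of $\varphi_a$, and reads off $Z_0\in\{a,\infty,J_a(\infty)\}$. That gives the extra information that the relation is a forwards one, consistent with (i). However, it presupposes that $\psi(a)$ comes from a B\"ottcher map. Your argument applies to an arbitrary canonical $\psi$ and needs neither (i) nor the FCR/BCR lemma. Your closing remark about branches differing by elements of $PSL(2,\Z)$ is harmless but unnecessary.
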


\begin{proof}

(i) When $a\in \K\setminus\m$ the forwards orbit of $a$ is contained in a transversal $\Delta_Q$ of $Q$, so cannot contain the critical point $-1$.

(ii) For $a\in \K\setminus\m$, Lemma \ref{fund_lemma} tells that there is a forwards image of $v_a$, say $Z_0$, in $\Delta'_a\cup J_a\Delta'_a$. The partial B\"ottcher map $\varphi_a$ was defined first on $\Delta'_a\cup J_a\Delta'_a$, then extended equivariantly backwards, up to and including the tile containing $v_a$. Thus our forwards $\F_a$-orbit from $v_a$ to $Z_0$ in $\Omega(\F_a)$ lifts equivariantly to a forwards $\{\sigma,\rho\}$-orbit from $\psi(a)=\varphi_a(v_a)$ to $\varphi_a(Z_0)$ in $\HH$. Since the property of being a torsion point of $\HH$ is preserved by $PSL(2,\Z)$, we deduce that $\psi(a)$ is a torsion point if and only if $\varphi_a(Z_0)$ is a torsion point. But this means that $Z_0$ projects to one of the two cone points of $\Omega(\F_a)/\langle \F_a \rangle$ and hence that $Z_0$ is the fixed point $Z=a$ of $J_a$ or the fixed point $Z=\infty$ of $Cov_0^Q$ (the point $Z=J_a(\infty)$ is in the forward orbit of $v_a$ if and only if $Z=\infty$ is too).
\end{proof}

\begin{figure}
\begin{center}
\scalebox{0.4}{\includegraphics{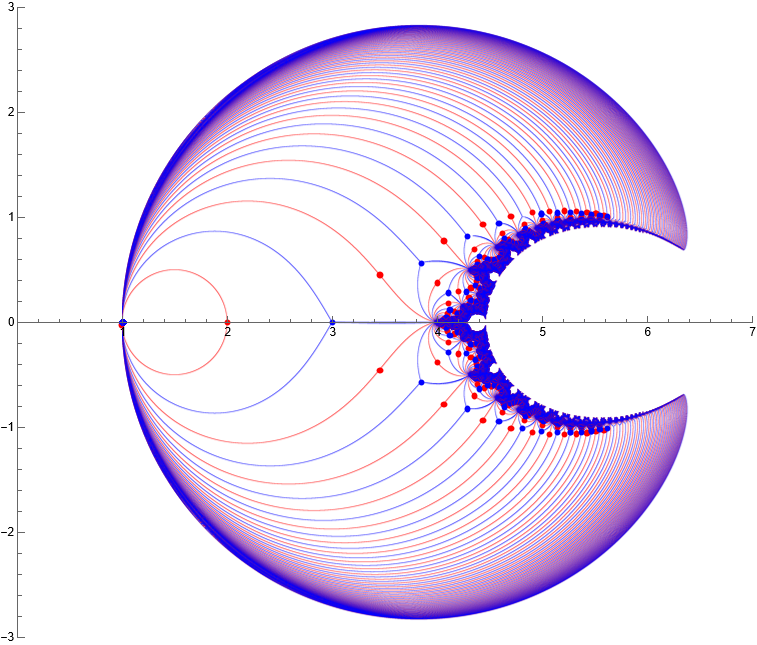}} 
\caption{The critical relation relation parameters $a\in\mathcal D \setminus \m'$ (marked with red and blue dots). All these are {\it forwards} critical relations and so each has image $\Psi(a)$ a {\it torsion point} of $\HH$ (Proposition \ref{CRinK}).}\label{coincidence_plot}
\end{center}
\end{figure}

Figure \ref{coincidence_plot} is a plot of the values of $a\in \mathcal D\setminus \m$ such that $\Psi(a)$ is a torsion point of $\HH$. The points marked by red dots are the values of $a$ where the fixed point $Z=a$ of $J_a$ is in the set $\F_a^n(v_a)$ for some $n\ge 0$, and those marked by blue dots are the values of $a$ where the fixed point $Z=\infty$ of $Cov_0^Q$ is in the set $\F^n(v_a)$ for some $n\ge 0$. Recall that $v_a=J_a(2)=2/(3-a)$ in the $Z$-coordinate. Thus, for example, when $a=2$, we have $v_a=2$ so $a\in \F_a^0(v_a)$, and when $a=3$ we have $v_a=\infty$, so $\infty\in \F_a^0(v_a)$. Recall also that $\Psi(2)=i\in\HH$, and $\Psi(\infty)=(-1+i\sqrt{3})/2\in\HH$.

\begin{remark}
Figure \ref{coincidence_plot} suggests that {\it all} the critical relations in $\K\setminus\m$ lie in $\mathcal D$, indeed that they lie in the subset $D_1\cap D_2\subset \mathcal D=\D(4,3)$, where
$D_1$ and $D_2$ are the round discs depicted in Figure \ref{D1andD2}. We warn the reader that, as in Figure \ref{mandelcorr}, the tile boundaries are not the true pull-backs via $\Psi^{-1}$ of the standard tile boundaries in $\HH$, but nevertheless the tile vertices are the true pull-backs of torsion points in $\HH$: see Section \ref{plots} for details.
\end{remark}

\subsection{Critical relations in $\widehat{\C}\setminus \K$}\label{CRoutside}

From Proposition \ref{CRinK}(i) we know that backwards critical relation points (BCR) are necessarily outside $\K$. In this subsection we exhibit a computer plot displaying a large number of BCR points $a$, and we prove that for each of these points the correspondence $\F_a$ has a finite parabolic cycle.

\begin{figure}
\begin{center}
\scalebox{.5}{\includegraphics{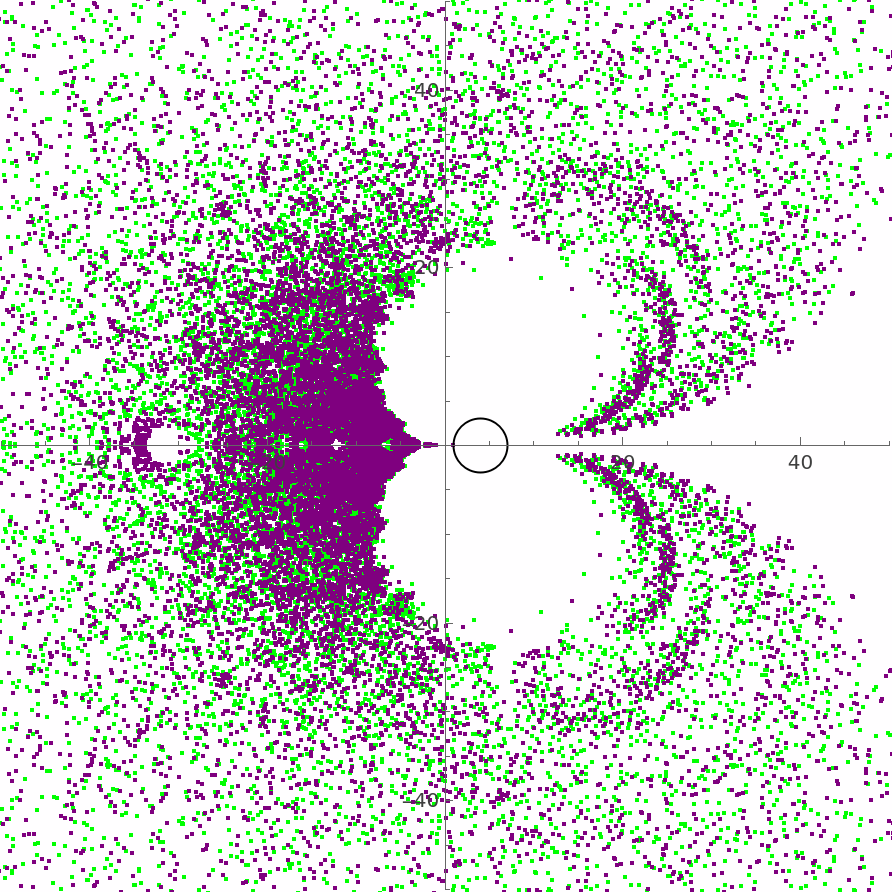}}
\caption{Values of the parameter $a$ where the critical value $v_a$ is in the forwards $\F_a$-orbit of $Z=a$ or $Z=\infty$ (see Section \ref{CRoutside}): these points are all outside $\K$, and are believed to be dense in the complement of $\K$. The small round disc plotted within $\K$ is $\mathcal D:=\D(4,3)$; just visible within $\mathcal D$ is $\m$.}\label{kleinC}
\end{center}
\end{figure}

Figure \ref{kleinC} is a plot of values of $a$ such that the critical value $v_a$ is in the forwards orbit under $\F_a$ of either the fixed point $Z=a$ of $J_a$ (in which case $a$ is marked in purple) or in the forwards orbit of the fixed point $Z=\infty$ of $Cov_0^Q$ (in which case $a$ is marked in green). These parameter values are necessarily outside $\K$, but we shall see that certain of them lie on the boundary $\partial\K$. The computer plot suggests the conjecture that the set of all green and purple points accumulates on $\partial\K$, and indeed everywhere outside $\partial\K$; it also provides numerical evidence supporting our conjecture that the Klein combination locus $\K$ is a topological punctured disc, with inward pointing cusps at $a=-1$ and $a=7$, and possibly at other critical relation parameters on $\partial\K$.

In the next Section we will investigate the question of which of these critical relations lie on the {\it boundary} $\partial\K$ of $\K$, and we will present  conjectures concerning the overall structure for $\K$. In preparation, we now prove a Proposition about backwards critical relations $a$ in general. If the relation has the form $-1 \in \F_a^n(a)$ or $-1\in \F_a^n(\infty)$ we call $n$ (supposed minimal) its {\it length}.

\begin{prop}\label{crit_cycles}
(i) Every backwards critical $2$-relation of length $n$ can be completed to a parabolic cyclic orbit of $\F_a$ of length $2n+1$.

(ii) Every backwards critical $3$-relation of length $n$ can be completed to a parabolic cyclic orbit of $\F_a$ of length $2n$.
\end{prop}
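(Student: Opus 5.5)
The plan is to use the reversibility of $\F_a=J_a\circ Cov_0^Q$ to reflect the given chain of length $n$, and then to close it up through the critical point $-1$. First I record the basic symmetry. Since $J_a$ and $Cov_0^Q$ are both involutive as correspondences, $\F_a^{-1}=Cov_0^Q\circ J_a=J_a\circ\F_a\circ J_a$. Hence
$$w\in\F_a(z)\iff J_a(z)\in\F_a(J_a(w)),$$
so applying $J_a$ to an $\F_a$-chain $z_0\to z_1\to\dots\to z_n$ gives an $\F_a$-chain $J_a(z_n)\to\dots\to J_a(z_0)$. Second, since $-1\in Cov_0^Q(-1)$, we have $J_a(-1)\in\F_a(-1)$. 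Third, since $Cov_0^Q(\infty)=\{\infty\}$, we have $\F_a(\infty)=\{J_a(\infty)\}$.

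\emph{Case (i).} Take a chain $a=z_0\to z_1\to\dots\to z_n=-1$. Reflecting it gives $J_a(-1)=J_a(z_n)\to\dots\to J_a(z_0)=a$, of length $n$. Concatenating the three pieces
$$a\to\cdots\to -1,\qquad -1\to J_a(-1),\qquad J_a(-1)\to\cdots\to a$$
gives a closed $\F_a$-orbit of length $n+1+n=2n+1$.

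\emph{Case (ii).} Take a chain $\infty=z_0\to z_1\to\dots\to z_n=-1$. By the third fact above, $z_1=J_a(\infty)$. Reflecting gives $J_a(-1)\to\dots\to J_a(z_1)=\infty$, of length $n-1$, after dropping the final step to $J_a(z_0)$. Concatenating $\infty\to\cdots\to-1$, then $-1\to J_a(-1)$, then $J_a(-1)\to\cdots\to\infty$ gives a cycle of length $n+1+(n-1)=2n$. I would also check, using minimality of $n$, that the listed points are distinct, so the cycle has exactly the stated length and is not a repetition of a shorter one.

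\emph{Parabolicity.} Let $h$ be the local holomorphic branch carrying a neighbourhood of $a$ (respectively of $\infty$) along the chain to a neighbourhood of $-1$. No critical point is met before the last step, by minimality. Let $C$ be the local branch of $Cov_0^Q$ fixing $-1$. Because $-1$ is a simple critical point of $Q$, $C$ is a holomorphic involution with $C'(-1)=-1$.

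In case (i), the symmetry of the reflected chain shows that its branch is $J_a\circ h^{-1}\circ J_a$. So the first-return map at $a$ is
$$g=(J_a\circ h^{-1}\circ J_a)\circ(J_a\circ C)\circ h=J_a\circ(h^{-1}\circ C\circ h).$$
This is a product of two local holomorphic involutions fixing $a$, namely $J_a$ and the conjugate $h^{-1}Ch$. Each has derivative $-1$ at $a$, so $g'(a)=(-1)(-1)=1$, and the cycle is parabolic.

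In case (ii), the same computation goes through with the roles adjusted. The return map at $\infty$ becomes the product of $h^{-1}Ch$ with the branch of $Cov_0^Q$ at $\infty$ composed with $J_a$ from the first step, and one must account for the fact that $\infty$ is a double critical point of $Q$.

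The step I expect to be the main obstacle is this last case. The branch of $Cov_0^Q$ fixing $\infty$ is not a simple involution: it has local degree considerations, since $Q$ has order $3$ at $\infty$. I would handle it by writing $g$ as $J_a\circ R\circ(h^{-1}\circ C\circ h)$, where $R$ is the local order-three rotation given by the branch of $Cov_0^Q$ at $\infty$. I would then recompute the multiplier, or alternatively use the reversibility $J_a\circ g\circ J_a=g^{-1}$ (up to conjugacy) to force $g'=\pm1$, and rule out $-1$ by tracking the orientation of the branches.
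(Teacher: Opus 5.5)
Your construction of both cycles is the paper's. Your treatment of (i) is correct and slightly sharper than the paper's. The paper argues only that the cycle coincides with its time-reversed $J_a$-conjugate, which gives multiplier $\pm1$. Your factorisation $g=J_a\circ(h^{-1}Ch)$ into two holomorphic involutions fixing $a$ gives multiplier exactly $1$. A minor point, which the paper also passes over: minimality of $n$ does not by itself keep $h$ holomorphic and invertible. It does not prevent the chain from passing through $2\mapsto J_a(-1)$, the inverse of the critical branch.

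The genuine gap is parabolicity in (ii), and neither route you sketch works as stated. First, $J_a\circ R\circ(h^{-1}Ch)$ is not a return map at all, since it sends $\infty$ to $J_a(\infty)=(a+1)/2$. To get the correct map, write $h=k\circ J_a\circ R$. Here $R$ is the branch of $Cov_0^Q$ at $\infty$ used in the first step, and $k$ is the branch of the chain from $z_1=J_a(\infty)$ to $-1$. The reflected chain has branch $J_ak^{-1}J_a$, so the return map is
\[
g=(J_ak^{-1}J_a)\circ(J_aC)\circ(kJ_aR)=m\circ R=R\circ(h^{-1}Ch),\qquad m:=J_ak^{-1}CkJ_a,
\]
where $m$ is a holomorphic involution fixing $\infty$. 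Since $R$ is a nontrivial local deck transformation of $Q$ at the triple point $\infty$, $R'(\infty)=\omega^{\pm1}$ is a primitive cube root of unity. Hence $g'(\infty)=-\omega^{\pm1}=e^{\mp i\pi/3}$. This is a root of unity, so the cycle is parabolic, but the multiplier is a primitive sixth root of unity, not $\pm1$.

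Second, your fallback via reversibility is aimed at a false conclusion. Time-reversal plus $J_a$-conjugation reproduces every step of the cycle except the one leaving $\infty$. There $J_aR$ becomes $J_a(J_aR)^{-1}J_a=J_aR^{-1}$. So reversibility only yields $g_R'(\infty)\,g_{R^{-1}}'(\infty)=1$, never $g'=\pm1$, and the plan of forcing $g'=\pm1$ and then excluding $-1$ cannot succeed. Concretely, at $b_1=-3$ the cycle is $\infty\to-1\to\infty$, and $g=(J_{b_1}CJ_{b_1})\circ R$ has multiplier $-\omega^{\pm1}$. The paper's proof of (ii) does not compute this multiplier either; the factorisation $g=m\circ R$ is what actually justifies ``parabolic'' there.
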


\begin{proof}
(i)  The two branches of $\F_a$ at $Z=-1$ are $-1\mapsto J_a(-1)$ and $-1 \mapsto J_a(2)(=v_a)$. Adding the first of these two links to our partial orbit $a \to \ldots \to -1$ and then following that by $J_a(-1) \to \ldots \to J_a(a)=a$ (recall that $J_a$ conjugates $\F_a$ to $\F_a^{-1}$) we obtain a cycle of length $2n+1$:
$$a \to \ldots \to -1\to J_a(-1) \to \ldots  \to J_a(a)=a.$$
This is parabolic, with derivative $\pm 1$, since it is the same cycle as its `time-reversed' $J$-conjugate.

(ii) A backwards $3$-relation gives us $\infty \to J_a(\infty)\to \ldots \to -1$ (at the first step we are using the fact that $Cov_0(\infty)=\infty$); again we add the link $-1 \mapsto J_a(-1)$ and follow it this time by the length ($n-1)$ composition $J_a(-1)\to\ldots \to J_a(J_a(\infty))=\infty$ to obtain a parabolic cycle of length $2n$:
$$\infty \to J_a(\infty)\to \ldots \to -1 \to J_a(-1)\to J_a(J_a(\infty))=\infty.$$
\end{proof}

Note that while these parabolic cycles pass through the critical {\it point} $Z=-1$, they do not pass through the critical {\it value} $J_a(2)$: they pass through the other image of $-1$ under $\F_a$, the point $J_a(-1)$.


\section{Conjectured structure of $\K$ and $\Psi(\K)$}\label{structure_K}

\begin{figure}
\begin{center}
\includegraphics[width=3cm]{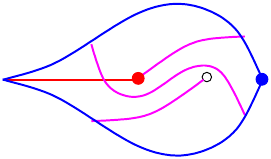}\hskip1.0cm\includegraphics[width=3.5cm]{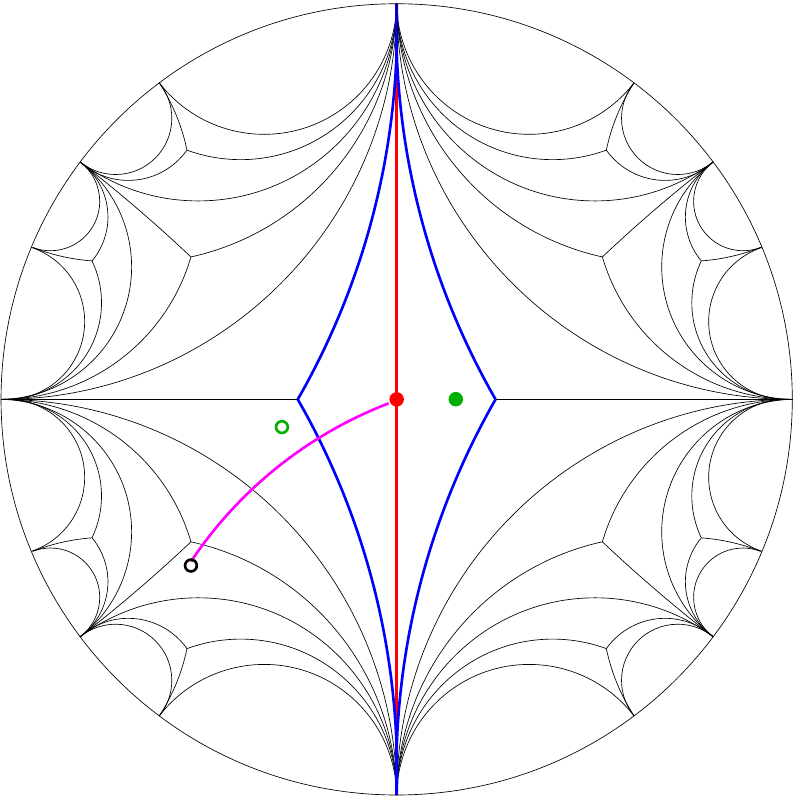}\hskip1.0cm\includegraphics[width=3.5cm]{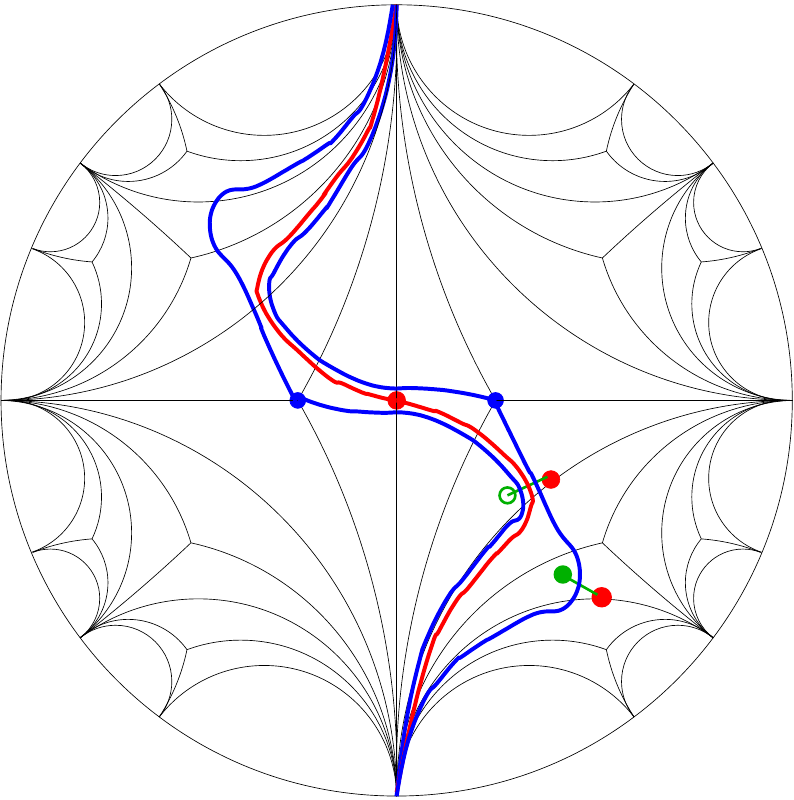}
\vskip0.5cm
\includegraphics[width=3cm]{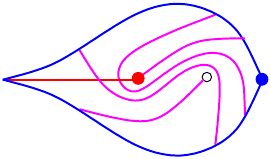}\hskip1.0cm\includegraphics[width=3.5cm]{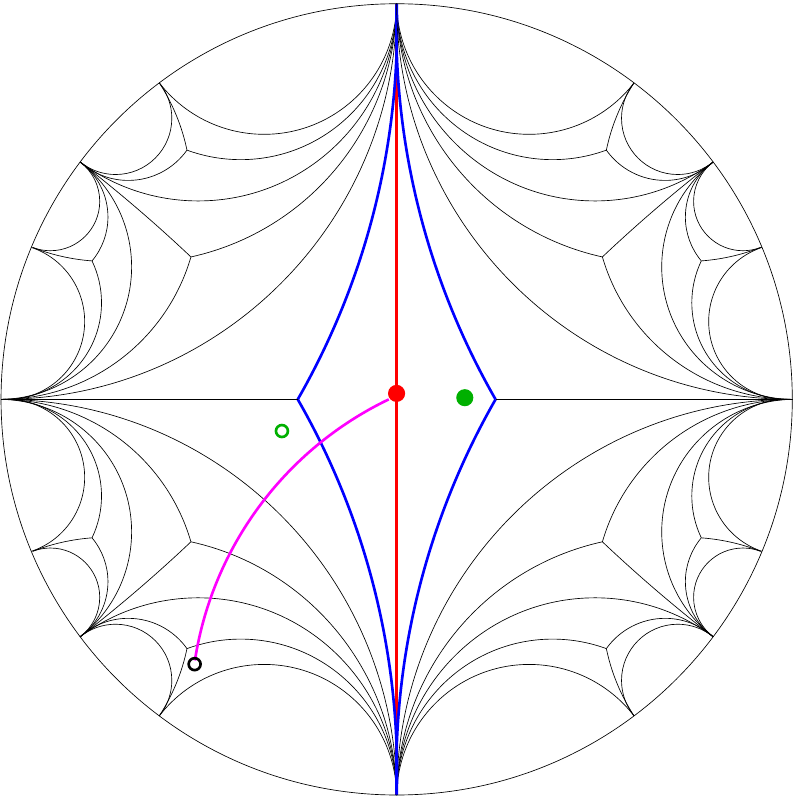}\hskip1.0cm\includegraphics[width=3.5cm]{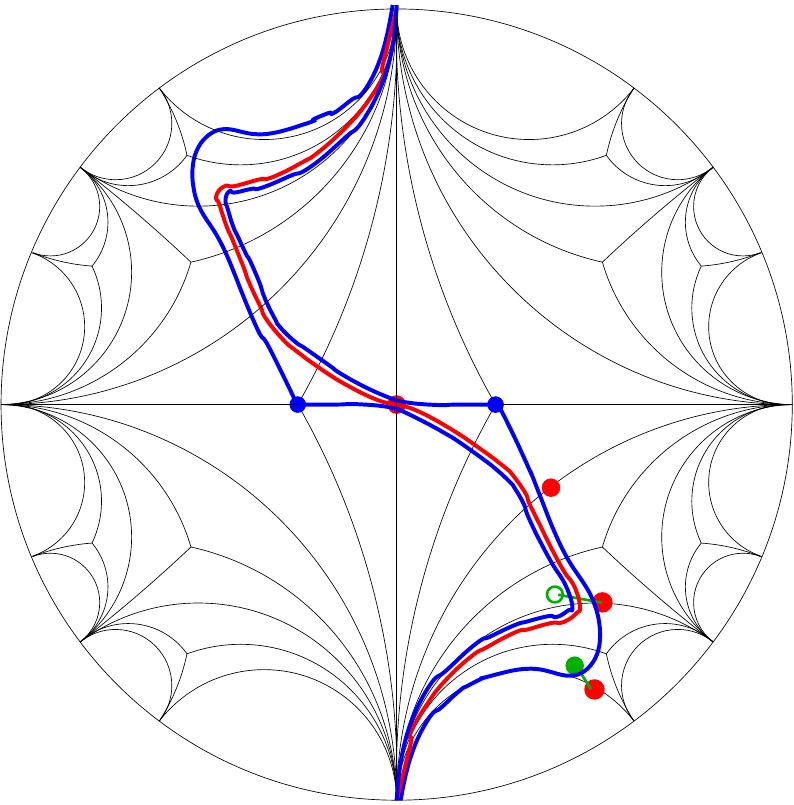}

\caption{Left: Paths $\ell_n^\mathcal{U}$, ($n=2,3$), from $a_0=1+\delta$ to $a_n$, on the dynamical space orbifold for $\F_{a_0}$ (compare Figure \ref{newfig5} for $n=1$). Middle: The images of these paths in $\HH$, under the conformal map $\varphi_{a_0}$ (the images of the critical point $c_{a_0}$ and value $v_{a_0}$ are marked by a green circle and green disc). Right: Conjectured $\varphi_a(\Delta_a\cup J_a(\Delta_a))\subset \HH$, when $a$ has moved along $\ell_n^\mathcal{U}$ to near $a_n$: the critical point and value tend to $(\sigma\rho)^{n-1}(i)$ and $(\sigma\rho)^n(i)$ when $a$ approaches $a_n$.}\label{an_path}
\end{center}
\end{figure}

\begin{figure}
\begin{center}
\scalebox{.35}{\includegraphics{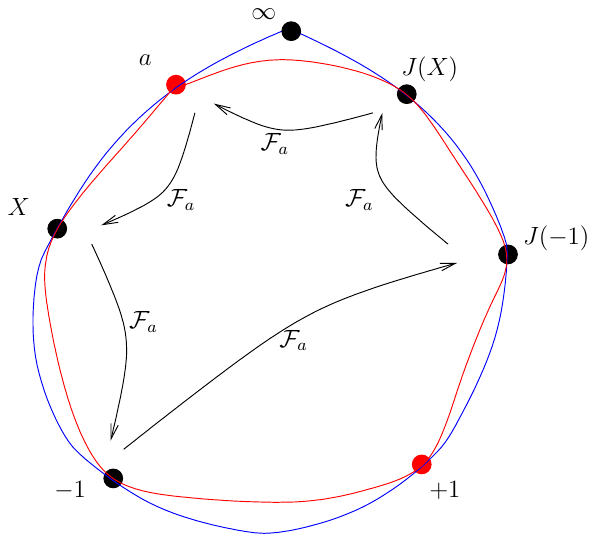}\includegraphics{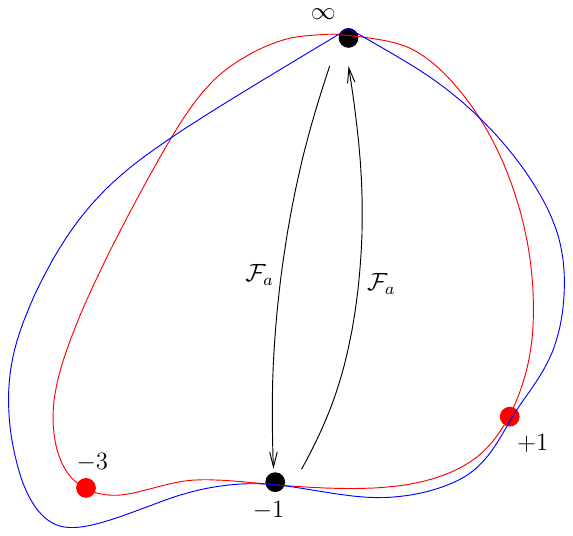}} 
\caption{Sketches of periodic orbits of $\F_a$ at the critical relation parameters $a=a_3$ and $a=b_1$. A pinched Klein combination pair of fundamental domains is illustrated schematically for $a_3$ (on the left): $\Delta_Q$ is the set bounded by the blue curve, and $\Delta_{J_a}$ is the exterior of the red curve. No such pair can exist for $b_1(=-3)$, as the angle of $2\pi/3$ at $\infty \in \partial\Delta_Q$ ensures that $\partial\Delta_{J_a}$ `overlaps' $\partial\Delta_Q$.}\label{cyclefigs}
\end{center}
\end{figure}

\subsection{Critical relations on $\partial\K$}\label{partialK}

\begin{conj}\label{conj_an}
For each $n \ge 1$ there exists a (unique) backwards critical $2$-relation parameter $a_n\in \partial\K\cap \mathcal U$ with the following properties, which also, with obvious changes, apply to the complex conjugate of $a_n$, the parameter $a'_n\in \partial\K\cap\mathcal L$.

\begin{enumerate}
    \item The critical point $-1 \in \F_{a_n}^{n-1}(a_n)$ (so the critical value $v_{a_n}\in \F_{a_n}^n(a_n)$).

    \item There exists a path $\ell_n^\mathcal{U}$ on $\Omega_{1+\delta}$ from $Z=v_{1+\delta}$ to $Z=a_n$ which projects injectively to a path on the orbifold $\Omega_{1+\delta}/\langle \F_{1+\delta}\rangle$. See Figure \ref{an_path}.

    \item There exists a pair of fundamental domains for $Cov^Q$ and  $J_{a_n}$ with boundaries touching only at the periodic cycle associated to the critical relation by Proposition \ref{crit_cycles}, and at the parabolic fixed point $Z=1$. The cycle is arranged in the same order as a rotation through $2\pi/(2n-1)$ (Figure \ref{cyclefigs}).

    \item The correspondence $\F_{a_n}$ is `discrete' (in the same sense as $\F_{-1}$ in Proposition \ref{F_-1}). It is conjugate to the correspondence obtained from $\F_{1+\delta}$ by contracting the path $\ell_n^\mathcal{U}$ on $\Omega_{1+\delta}$ to a point.

    \item As $a$ tends to $a_n$ along any path in $\K$ from $1+\delta$ to $a_n$ homotopic to $\ell_n^\mathcal{U}$ relative to its ends, $\lim_{a\to a_n}\Psi^\mathcal{U}(a)=(\sigma\rho)^n(i) \in \HH$.
\end{enumerate}
\end{conj}

\begin{remark}\label{boundary_remark}

1. Conjecture \ref{conj_an} generalizes the results we proved for  $a_1=a_1'=-1$ in Section \ref{second_extension}: indeed the properties could in principle be verified by similar methods, if we had explicit values for all the $a_n$. In \cite{BC} the parameter $a_2=(-9+i\sqrt{23})/2$ is shown to satisfy the properties listed for $n=2$: the regular set $\Omega_{a_2}$ is shown to be tessellated by ideal hexagons, and the limit set of $\F_{a_2}$ is plotted (it resembles the $1$-skeleton of a tetrahedron, decorated with dendrites). For general $n$, we note that Proposition \ref{accessible} guarantees the existence, for any $\epsilon >0$, of a based fundamental domain $\Delta\subset\HH$ for $PSL(2,Z)$ containing points $\epsilon$-close to $(\sigma\rho)^n(i)$ (in the hyperbolic metric). Hence if the inclusion $\psi(\K\setminus\m)\subseteq\mathcal{W}_{acc}$ of Section \ref{isotopy_description} is a surjection, Conjecture \ref{conj_an} will follow: in particular there will exist a pair $(\Delta_Q, \Delta_{J_{a_n}})$ for $\F_{a_n}$ such that the complement of their union consists of $2n$ points, namely $+1$ and the critical cycle, and it will follow that $\F_{a_n }$ is `discrete'.

2. It is an open question whether there are any backwards $3$-critical relations on $\partial\K$. In Figure \ref{kleinC} one can see parameter sequences $b_n$ and $b_{n'}$ of backwards $3$-critical correspondences close to $\partial\K$, and we know from Proposition \ref{accessible} that there are candidate based fundamental domains $\Delta\subset\HH$ for $PSL(2,\Z)$ with boundaries arbitrarily close to these torsion points. However, even if $b_n$ and $b_{n'}$ lie on $\partial\K$ it seems unlikely that the correspondences $\F_{b_n}$ and $\F_{b_{n'}}$ can be discrete. Consider $b_1$: here the (length $2$) critical cycle is
$$\infty \to \F_{b_1} (\infty) =J_{b_1} (\infty) = -1\to J_{b_1} (-1)=\infty$$
and the equality $J_{b_1}(-1)=\infty$ tells us that $b_1=-3$. It is not possible to find a pair $(\Delta_Q,\Delta_{J_{b_1}})$ whose boundaries meet only at the periodic cycle $\infty\to -1\to \infty$ and the parabolic fixed point $+1$ (see the right-hand sketch in Figure \ref{cyclefigs}).

\end{remark}

\subsection{The global structure of $\K$}\label{global}

Let $\K_0$ denote the component of $\K$ containing $\mathcal{D}=\D(4,3)$, let $I$ denote the real interval $(-1,+1)$, and let $\K_0^{cut}$ denote $\K_0\setminus I$.

\begin{conj}\label{conjK}
\begin{enumerate}
    \item $\K$ is connected, that is, $\K_0=\K$;

    \item $\K_0^{cut}\setminus \m$ is simply-connected;

    \item $\Psi$ extends uniquely from ${\mathcal D}\setminus\m$ to $\K_0^{cut}\setminus \m$, and 

    \item this extension is injective on $\K_0^{cut}\setminus \m$.
\end{enumerate}
\end{conj}

There are parallels between the structure of $\K$ and that of the space of conjugacy classes of faithful discrete representations in $PSL(2,\C)$ of the free product $C_2*C_3$ of cyclic groups of orders $2$ and $3$, the `discreteness locus' of $C_2*C_3$. The latter has the structure of a topological punctured disk that sits inside the moduli space of {\it all} representations of $C_2*C_3$ in $PSL(2,\C)$. Indeed this is more that an analogy: our family of correspondences $\F_a$ is a slice through a $2\C$-parameter family of correspondences $\F_{a,k}$, another slice through which is this moduli space of representations of $C_2*C_3$ (see \cite{BP1,BH}). Much more is known about this moduli space (an `elliptic cousin of the Riley slice') than about $\K$ (see \cite{KS,EMS}, and for a beautiful plot of the discreteness locus inside the moduli space see the front cover of the Notices of the AMS for December 2016: this plot, by Yasushi Yamashita, illustrates Gaven Martin's search for the smallest volume hyperbolic orbifold, though from our point of view the plot is inside out, as the complement of the discreteness locus is in the centre). 

\begin{figure}
\begin{center}
\scalebox{.23}{\includegraphics{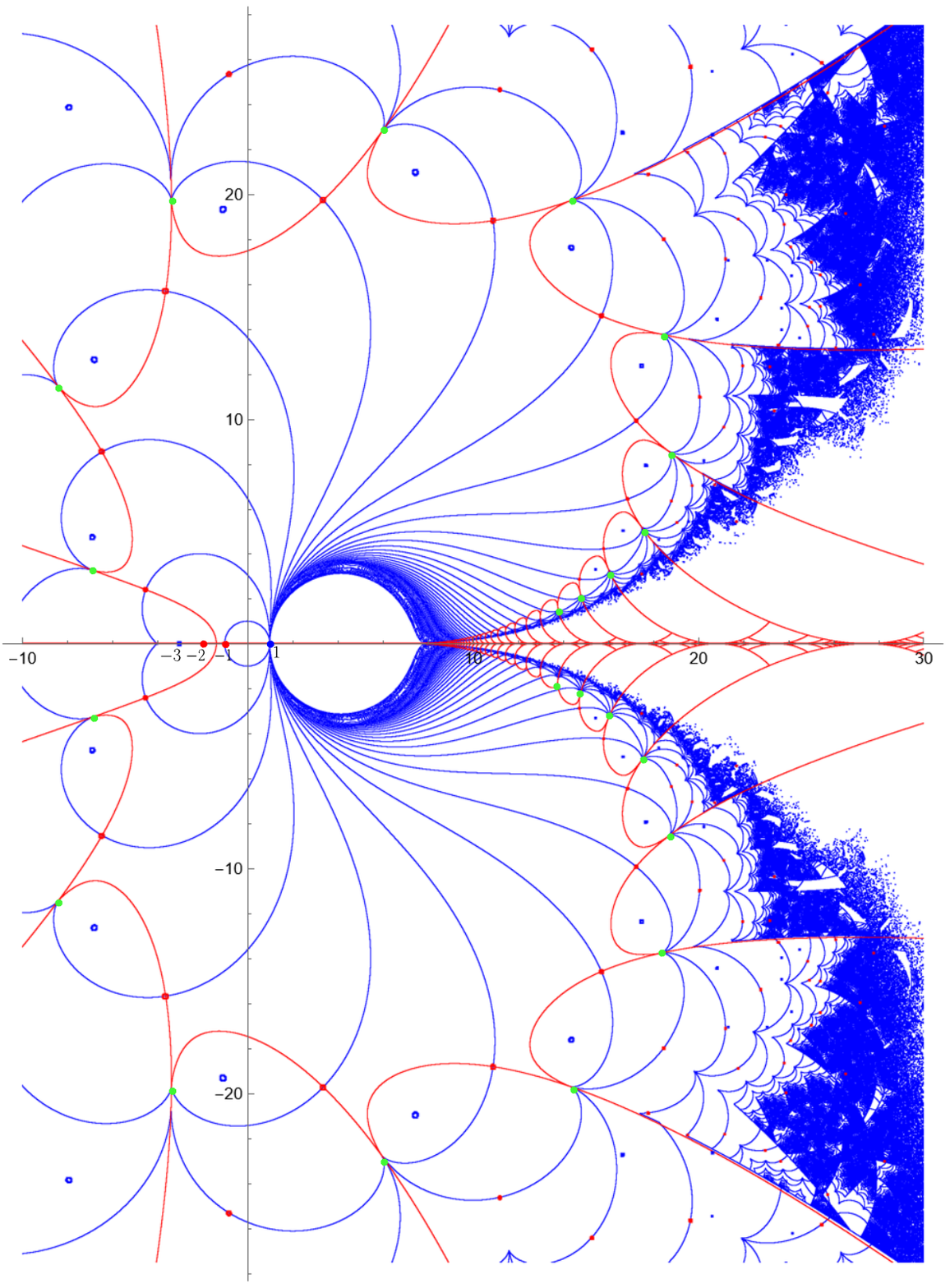}}
\end{center}
\caption{\small Initial points $z_0=a$ are outside the unit circle; the iteration proceeds at the $n$th step only if there is a value of $\F_a(z_n)$ outside the unit circle, and uses this as $z_{n+1}$. The red lines correspond to $\{a:\mathcal{F}_a^n(a) \in [-1, +1]\}$ for some $n$, and the blue lines correspond to $a$ such that $ \mathcal{F}_a^n(a) $ lies on the unit circle. The red points correspond to $\mathcal{F}_a^n(a) = -1 $, the green points to $ \mathcal{F}_a^n(a) = +1 $, and the blue points to $ \mathcal{F}_a^n(\infty) = -1 $.}\label{bdry}
\end{figure}

A Kleinian group action on $\widehat{\C}$ extends to an action by isometries on the hyperbolic $3$-space bounded by $\widehat{\C}$, providing extra tools with which to analyse the (conformal) action of the group on $\widehat{\C}$. In particular, for each rational $p/q$ with $0\le p/q<1$ there is a `pleating ray' in the discreteness locus for $C_2*C_3$, along which a pair of attracting and repelling $q$-cycles (of rotation number $p/q$) have real trace. These rays run from a puncture point at the centre of the locus, out to the boundary, where the two $q$-cycles collide, to become a parabolic cycle. We conjecture that there are analogous rays in $\K$, running out from the puncture point $a=+1$ to the critical relation parameters $a_n$ and $a_n'$ on $\partial\K$. However there is a major difference between the two situations: the pleating rays fill the discreteness locus for $C_2*C_3$, once one has added rays for irrational rotation numbers, but in $\K$ we have only a discrete set of rays, one for each rotation number of the form $1/(2n-1)$ (mod $1$). A feature the two situations have in common is the existence of `harmonics' on the rays extended beyond the parabolic points: in the group case these are a sequence of isolated discrete, but no longer faithful, representations of $C_2*C_3$ (giving the small volume orbifolds of Gaven Martin's search). In the correspondence case they are isolated `discrete' correspondences outside $\K$: see Appendix II (Section \ref{triangles}).

Figure \ref{bdry} is a computer plot displaying the role of the sequences $(a_n)$ and $(a_n')$ in the structure of $\K$ in a different way: the rays connecting these critical relation parameters to the unit circle in the $a$-plane are not the conjectured `pleating rays' referred to above.

\subsection{A possible scenario for $\Psi(\K^{cut})\subset\HH$}\label{scenario}

Figure \ref{schematic} is a schematic illustration of a scenario for $\Psi(\K^{cut})$ which is compatible with the structure of $\K$ proposed in Conjectures \ref{conj_an} and  \ref{conjK}: it is speculative, due to our limited knowledge both of the boundary of $\K$ and of the behaviour of the map $\Psi$ in a neighborhood of this boundary.

\begin{figure}
\begin{center}
\scalebox{0.6}{\includegraphics{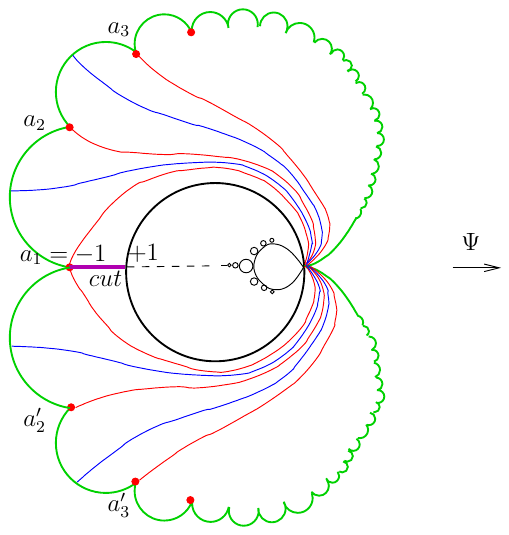}}\scalebox{0.4}{\includegraphics{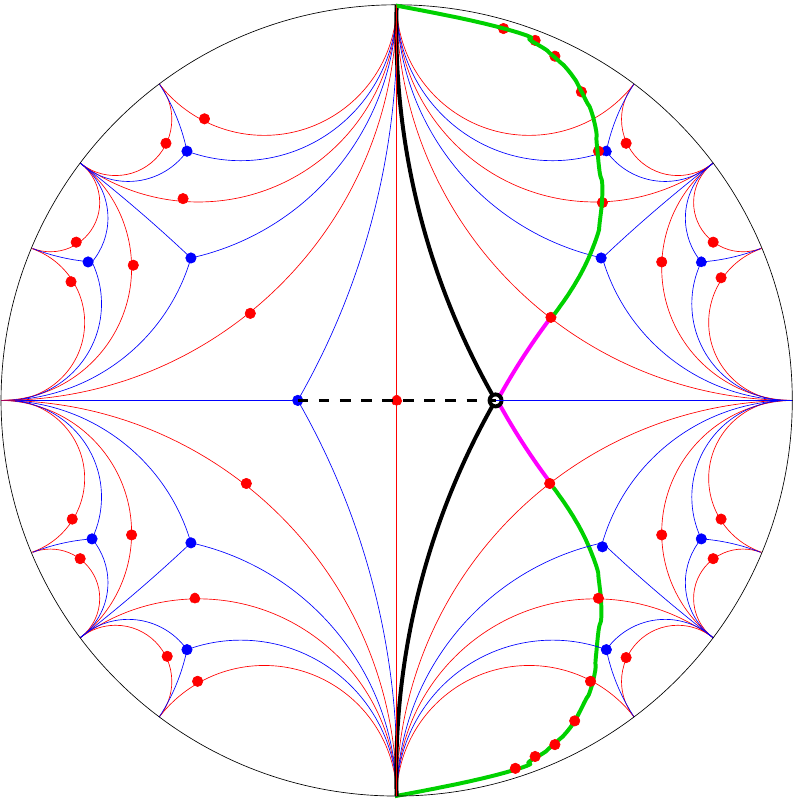}}

\scalebox{0.6}{\includegraphics{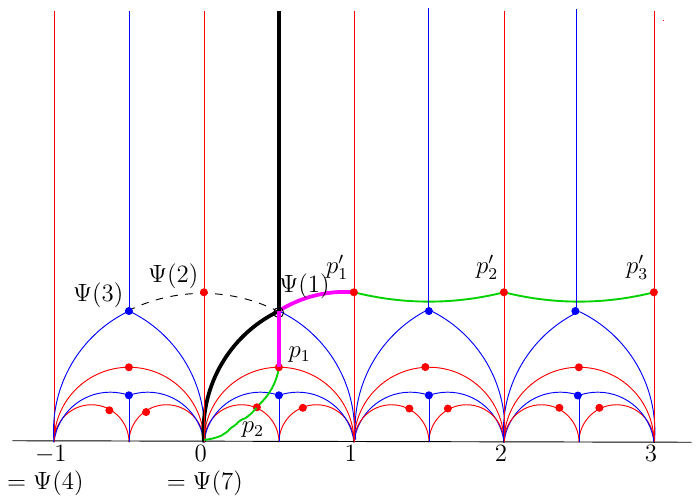}}
\caption{Schematic sketches of the (conjectured) set $\K$ and $\Psi(\K^{cut}\setminus\m)\subset \HH$. The set $\K$ (on the left) is not to scale. The image $\Psi(K^{cut}\setminus \m)\subset \HH$ is sketched on the right, where $\HH$ is represented as the Poincar\'e disc $\D$, and also sketched below on the half-plane $\HH$. In both, this image is the set above and to the left of the green and magenta curve: the image in $\D$ may be visualized as obtained from $\K$ by excising $\m$ (including its root point) from the right-hand side of $\D(4,3)\subset\K$, cutting along the interval $[-1,+1]$ on the left-hand side of $\K$ and opening the cut up to an angle of $2\pi/3$, then rotating the illustration of $\K$ through $\pi$, to move the cut to the right-hand side and the boundary of $\m$ to the left-hand side. The boundary of the modular Mandelbrot set $\m$ is mapped to the boundary of $\HH$ along the interval $[-\infty,0]\subset \widehat\R$. The image $\Psi^{cut}(\mathcal D)$ of $\mathcal D=\D(4,3)$ has boundary approximately the curve drawn in black. The set of all torsion points in $\HH$ accumulates on $\widehat \R=\R\cup\{\infty\}$ (only the first few layers are shown). The torsion points $p_n$ and $p_{n'}$ ($n\ge 1$) are the images under $\Psi$ of the critical relation parameters $a_n$ and $a_{n'}$. It is an open question whether the green curve passes through the nearby $3$-torsion points (blue dots) between adjacent $p_n$'s (resp. $p'_n$'s): see Remark \ref{boundary_remark}. The torsion points within $\Psi(\mathcal D)$ (i.e. to the left of the solid black curve) are all accounted for by Figure \ref{coincidence_plot}.}\label{schematic}
\end{center}
\end{figure}

Three key open questions:

\begin{enumerate}
    \item If we coordinatize the hyperbolic plane as the unit disc $\D$, with the fixed point of $\sigma\rho\sigma$ placed at the origin in $\D$, does the multi-valued function $\Psi: \K\setminus \m \to \D$ have the form $a \to -(f(a-1))^{2/3}$, with $f$ a (single-valued) real analytic function, and is the restriction of $\Psi$ to $\Psi^{cut}:\K^{cut} \to \HH$ a bijection from $\K^{cut}$ onto its image?

    \item What is the structure of $\partial\K$ between each $a_n$ and $a_{n+1}$, and what is the structure of the corresponding part of $\partial(\Psi(\K))$?  

    \item What structures are there in parameter space {\it outside} $\K$? We have seen members of the family $\F_a$ which act `discretely' but which have `fundamental domains' with more than one component, for example the correspondences $\F_{a_n}$ of Section \ref{structure_K}, and $\F_{c_q}$ of Section \ref{triangles}. Are these isolated examples or are there regions of stable behavior outside $\K$?
\end{enumerate}

These questions are related. The boundary $\partial\K$ of $\K$ should be the natural boundary of the analytic function $f$, and contain a dense set of singularities. Given the structure of related moduli spaces (\cite{KS,EMS}) one might anticipate a dense set of cusps on $\partial\K$, each corresponding to an $\F_a$ which has a parabolic orbit. However the points $a_n,a_n'$  ($n\ge 1$) only accumulate at the root point $a=7$.


\section{Appendix I: Plotting the tessellation}\label{plots}

\begin{figure}
\begin{center}
\scalebox{.42}{\includegraphics{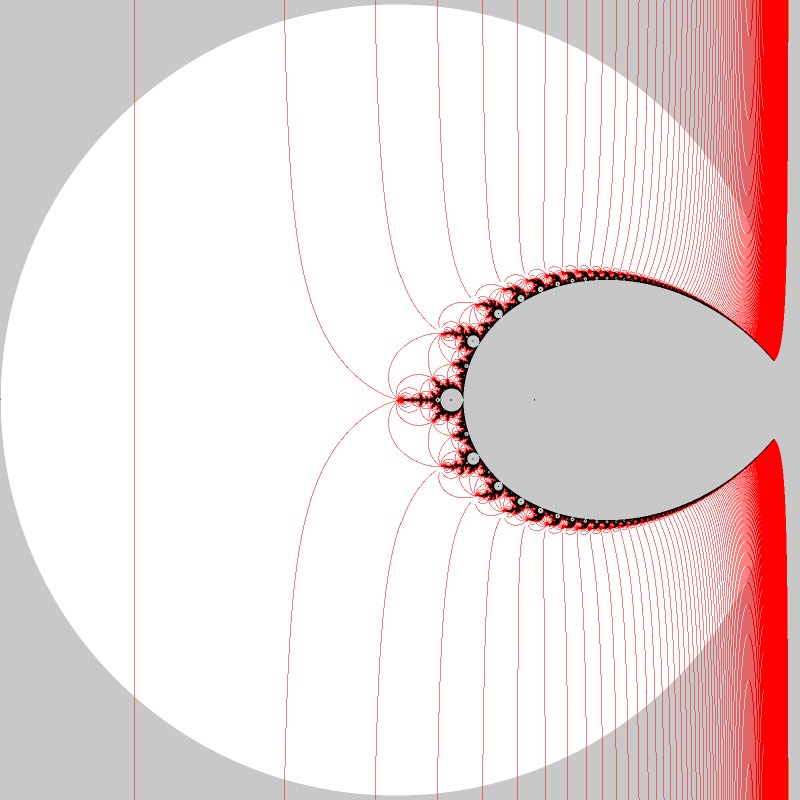}} 
\hfill
\scalebox{.42}{\includegraphics{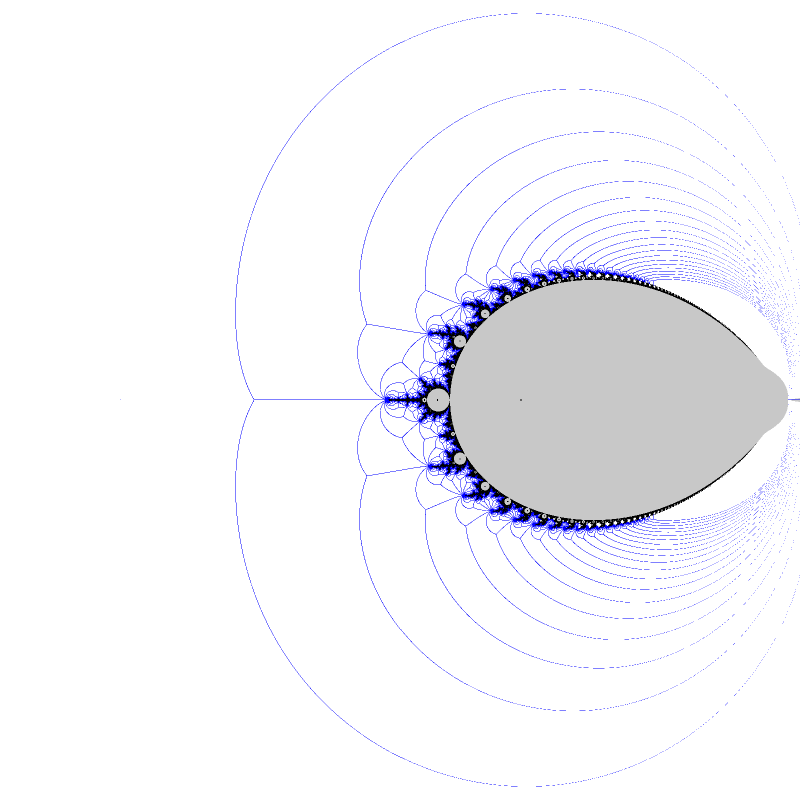}} 
\caption{Tesselations of a neighbourhood of $M_\Gamma$, plotted using different choices of fundamental domain boundary. Imperfections near $a=7$ are due to the slow speed of the algorithm near a parabolic fixed point.}\label{tessellations}
\end{center}
\end{figure}

We would be in a better position to make conjectures concerning the sets $\K$ and $\Psi(\K^{cut})$, their tilings, and their boundaries, if we had accurate computer plots of the tessellation to guide us. 
However we do not know of any algorithm to plot the pull-backs under $\Psi$ of the lines making up a standard tessellation of ${\mathbb H}$, the analogues of the Douady-Hubbard {\it parameter rays} for quadratic polynomials. The difficulty is that while the punctured sphere with two cone points $\HH/PSL(2,\Z)$ carries a canonical marking of lines joining the cone points to one another and to the puncture point, determined by its conformal geometry, we do do not know of any way to determine these lines in $\mathcal S=\Omega(\F_a)/\langle \F_a \rangle=\overline\Delta_a/{\rm (side-pairing)}$ despite our knowledge that $\mathcal S$ is conformally isomorphic to $\HH/PSL(2,\Z)$.

While the outer region (and much of the boundary) of $\mathcal K$ remains \textit{terra incognita}, the best we can hope to do at present is to plot approximate pictures of the tiling of pinched neighbourhoods of $M_\Gamma$ in $\mathcal K$, in particular the pinched neighbourhood $\D(4,3)$. In Figure \ref{mandelcorr} (in the Introduction) and Figure \ref{tessellations} (in this Section) we display three different tessellations of $\D(4,3)\setminus M_\Gamma$, each plotted using a condition that some image $\F_a^n(v_a)$ ($n\ge 0$) of the critical value $v_a$ lies on a fundamental domain boundary in the dynamical plane. In Figure \ref{mandelcorr} the boundaries of both the `standard' fundamental domains $\Delta_{Q}^{st}$ and $\Delta^{st}_{J_a}$ were used. In Figure \ref{tessellations} in the left-hand plot the fundamental domain $\Delta_{J_a}$ was taken to be the exterior of the circle in the $Z$-plane with diameter $[1,a]$. The fundamental domain $\Delta_{Q}$ can be suitably adjusted to form a Klein combination pair with this $\Delta_{J_a}$ but this is not straightforward to do algorithmically, so in the plot we only mark points $a$ such that $\F_a^n(v_a)$ lies in $\partial\Delta_{J_a}$ for some $n\ge 0$. In the right-hand plot we used a fundamental domain $\Delta_{Q,a}$ for $Cov^Q$ whose boundary moves holomorphically in the $Z$-plane as $a$ varies: to be precise, we took the images under $Z=(W+1/W)/2$ of the lines $W=e^{\pm \pi i/3}(1+te^{i\theta})$ where $\theta=\arg((a-1)/(7-a))$. We know that each such $\Delta_{Q,a}$ has a partner $\Delta_{J_a}$ making up Klein combination pair, but its boundary is not easy to compute algorithmically, so in this plot we have only marked the points $a$ such that some $\F_a^n(v_a)$ lies in $\partial\Delta_{Q,a}$. Observe that in all three plots the tile vertices are in the same positions: they correspond via $\Psi$ to the same torsion points in ${\mathbb H}$ (this is a consequence of Proposition \ref{CRinK}).

We believe that the right-hand plot in Figure \ref{tessellations} is the nearest of the three plots to the `true' tessellation of $\D(4,3)\setminus M_\Gamma$, because it fits best with the picture suggested outside $\D(4,3)$ by Figure \ref{schematic}.  


\section{Appendix II: Parameters outside $\partial\K$ at which $\F_a$ is `discrete'; analogy with triangle groups}\label{triangles}

As always, let $Q(Z)=Z^3-3Z$. The inverse image $Q^{-1}(-L)$ of the line segment $-L:=[2,\infty)\subset\C$ consists of $-L$ together with a curve $C_Q$ which crosses the real axis at $Z=-1$ and runs off towards infinity asymptotically to the directions of argument $\pm 2\pi/3$. The component of $\widehat\C\setminus C_Q$ to the left of $C_Q$ bounds a fundamental domain for $Cov^Q$ (an alternative to $\Delta_Q^{st}$ defined in Section \ref{standard_domains}), and $Cov_0^Q$ acts on $C_Q$ by complex conjugation. For $a<-1$ real, the two points where the circle $C_J$ which has diameter $[a,1]$ intersects $C_Q$ are each a fixed point of (a branch of) $\F_a$ since they are interchanged by both $Cov^Q_0$ and $J_a$. Moreover, these are neutral fixed points and the derivative of $\F_a$ at each is $e^{2\theta i}$ and $e^{-2\theta i}$ respectively, where $\theta$ is the angle between the curves (which act like mirrors). The angle $\theta$ decreases monotonically to $0$ as $a$ increases from $-\infty$ to $-1$. The parameter $c_q$ is defined to be the value of $a$ such that $\theta=\pi/q$.

\begin{figure}
\begin{center}
\includegraphics[width=5cm]{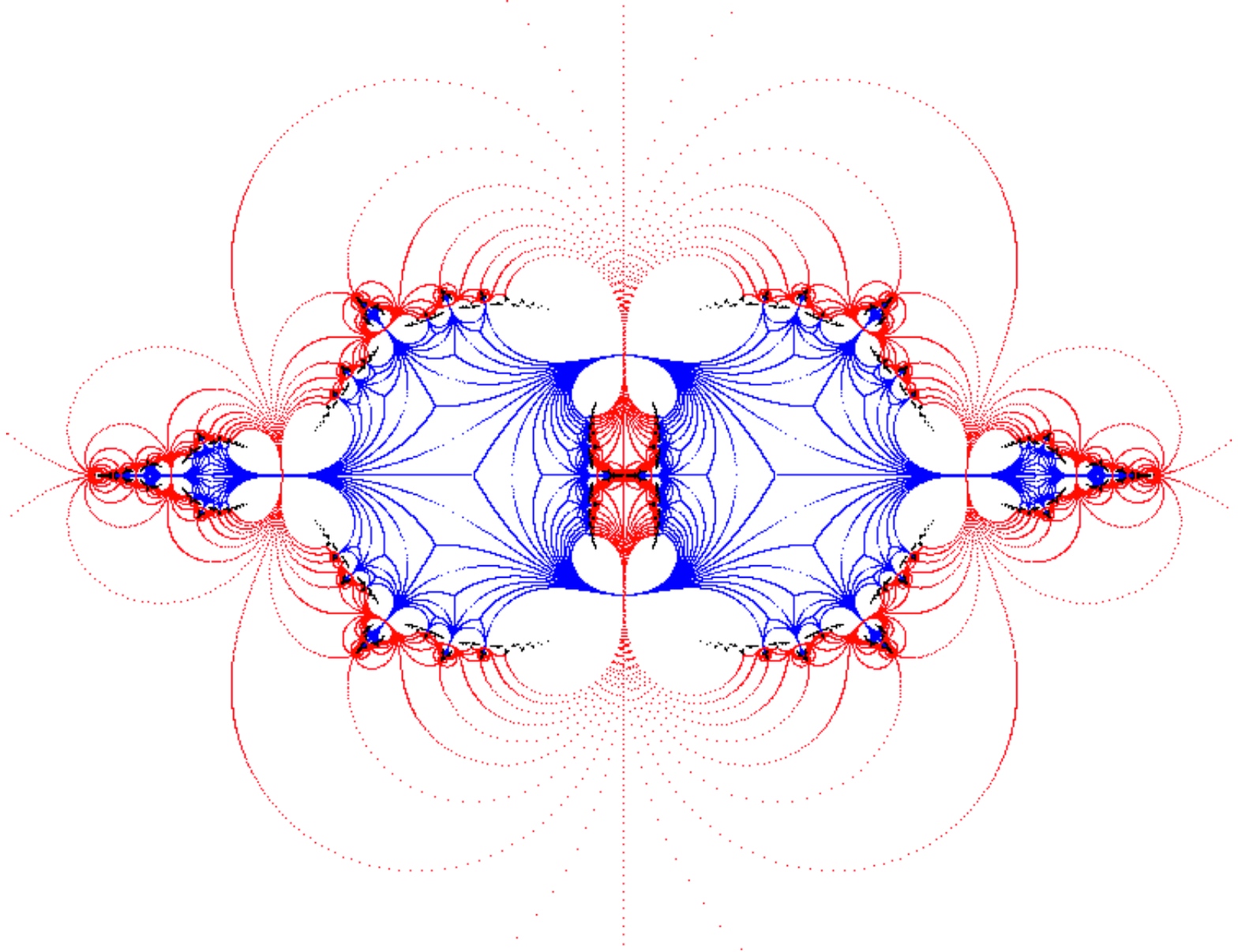}\hspace{0.1cm}
\includegraphics[width=5cm]{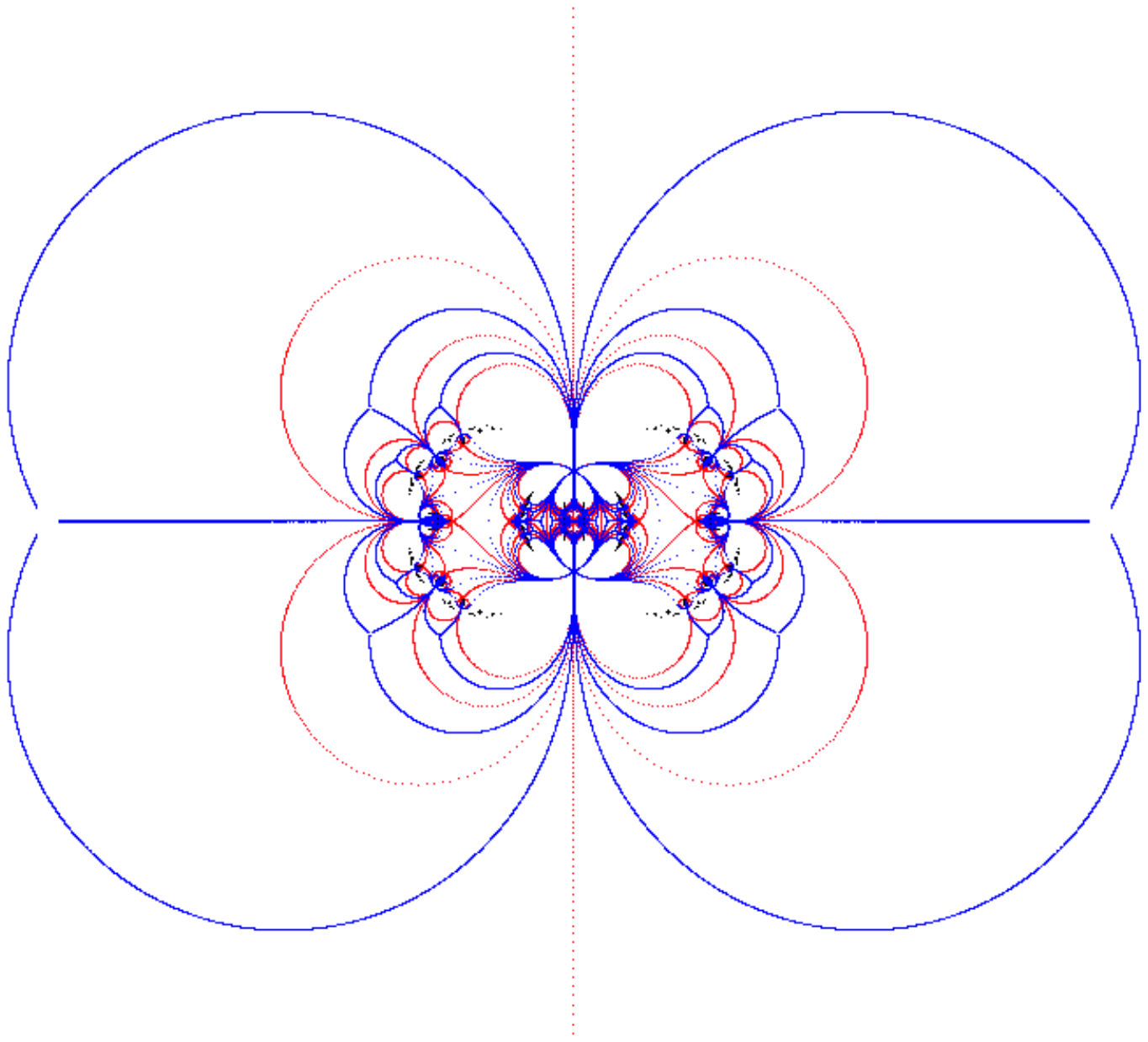} 

\includegraphics[width=4cm]{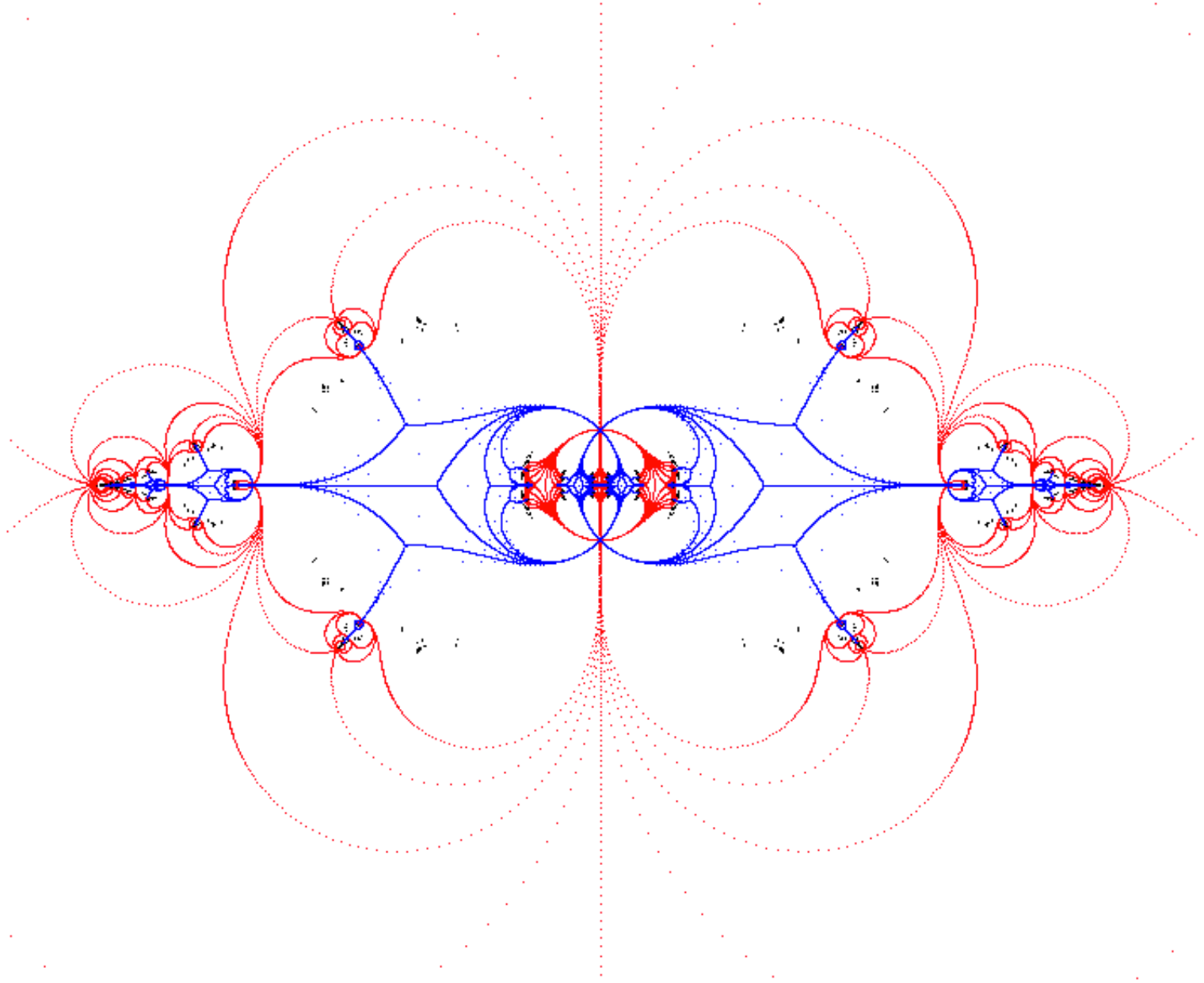} \hspace{1cm}
\includegraphics[width=4cm]{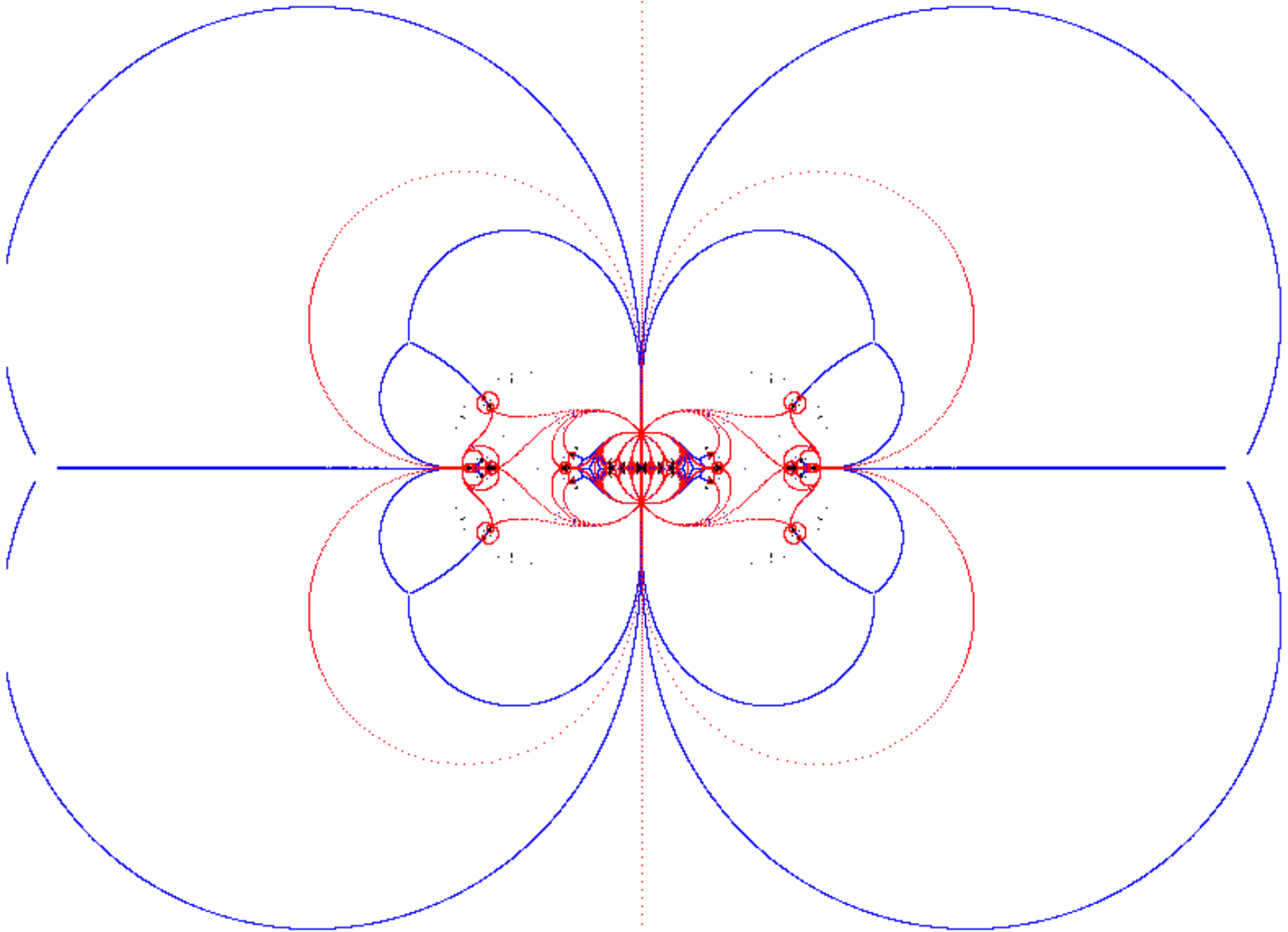}
\end{center} 
\caption{Isolated discrete $\F_a$ outside $\partial\K$ for rotation numbers $1/2,1/3,1/4$ and $1/5$ (top line: $a=-2.464$, $a=-1.492$; bottom line: $a=-1.255$ and $a=-1.157$; scales are adjusted to show the main features of the dynamics).}\label{isolated}
\end{figure}

Figure \ref{isolated} exhibits plots of the dynamics of $\F_{c_q}$ for $q=2,3,4$ and $5$. These are displayed in the $z$-coordinate (so $C_J$ is the imaginary axis, and $J$ is the involution $z\leftrightarrow -z$). The curves plotted are the grand orbits of $C_Q$ (in blue), and of the circle $C_J$ (in red), together with the grand orbit of a point on the limit set (in black). The iteration is only plotted to a limited depth, but the two fixed points in each plot are visibly the centres of Leau-Fatou parabolic flowers which have $2q$ petals, with the limit set approaching the centre of the flower along the mid-line of each petal. The plots suggest that dynamics of each $\F_{c_q}$ on the complement of its limit set is discrete, in the sense of the existence of fundamental domains. In Chapter 5 of his thesis \cite{C}, Andrew Curtis proves this in the case of $c_2$, by identifying a fundamental domain with two components, one a region bounded by segments of $C_J$ and $J(C_Q)$ between the two fixed points, and the other a region bounded by segments of $C_Q$ and $Cov_0^Q(C_J)$ between the fixed points. For $q>2$ he proposes fundamental domains for $\F_{c_q}$ with larger numbers of components bounded by curves between the fixed points, and he provides strong supporting evidence.

The mechanism behind the appearance of this sequence of parabolic examples is the fixed point bifurcation of $\F_a$ which occurs at $a=-1$ as $a$ travels along the real axis from right to left. For $a>-1$, $\F_a$ has a pair of attracting and repelling fixed points of on the real $Z$-axis: at $a=-1$ these collide to form a single (parabolic) point, and for $a<-1$ this then splits into a complex conjugate pair of neutral fixed points of increasing argument, which yield discrete correspondences when and only when the argument is $2\pi/q$ for an integer $q>2$. 

This is reminiscent of an analogous phenomenon in the moduli space of Fuchsian representations of the free product $C_2*C_3$ of cyclic groups of orders $2$ and $3$. Let $\sigma$ denote the rotation through $\pi$ of the Poincar\'e disc $\D$ about its centre $O$, and $\rho$ denote the elliptic isometry of $\D$ of order $3$ which fixes a point $P$ on the horizontal diameter of $\D$ somewhere to the right of $O$. Let $G<PSL(2,\R)$ be the group of isometries generated by $\sigma$ and $\rho$. When the distance $OP$ (in the hyperbolic metric) is large, $G$ is a faithful discrete representation of $C_2*C_3$, the element $\sigma\rho$ has attracting and repelling fixed points on the boundary circle $C$ of $\D$ and the limit set of $G$ is a Cantor set contained in $C$. As $d$ decreases, the gap between the attracting and repelling fixed points decreases and at a certain value of $d$ they collide to become a single parabolic fixed point and we see the familiar parabolic representation of $C_2*C_2$ (conjugate to $PSL(2,Z)$ in the half-plane model). When $d$ decreases further, the fixed point of $\sigma\rho$ splits into a pair of elliptic fixed points, which move off $C$ perpendicularly (we see both if we are viewing $G$ as a subgroup of $PSL(2,\C)$ but only the fixed point inside $\D$ when we work in $PSL(2,\R)$). The group $G$ is now discrete when and only when $\sigma\rho$ turns through an angle of form $2\pi/q$ about its fixed point; it is then the familiar $(2,3,q)$ triangle group, and though discrete, it is no longer a {\it faithful} representation of $C_2*C_3$, as $(\sigma\rho)^q$ is the identity.

The moduli space of representations of $C_2*C_3$ in $PSL(2,\C)$ has `faithful discreteness' locus a once-punctured topological disc $\mathcal D$, which has the boundary resembling that of a cauliflower if we parametrise the representations by the cross-ratio of the fixed points of $\sigma$ with those of $\rho$. The dense set of cusps in $\partial D$ are parametrised by rationals $p/q$ (mod $1$): each arises when a certain word in $\sigma,\rho,\rho^{-1}$ is parabolic, and we see the same phenomenon of a harmonic sequence of isolated non-faithful discrete representations outside $\mathcal D$ converging to each of these cusps.

Our correspondences $\F_a$, for the parameters $a_n$ and $a_n'$, $n>1$, on the boundary of the `discreteness locus' $\K$, are conjectured to have parabolic cyclic orbits (Conjecture \ref{conj_an}). We expect each of these cycles to give rise to a `harmonic' sequence of isolated `discrete' correspondences outside $\partial\K$. 


\end{document}